\setlist[enumerate]{leftmargin=*}
\newcommand{\e}{\mathrm{e}}
\newcommand{\diff}{\textrm{d}} %differential
\def\bbX{{\mathbb X}}
\renewcommand{\approx}{ \asymp}
\DeclareMathOperator{\supp}{supp}
\DeclareFontFamily{U}{mathx}{\hyphenchar\font45}
\DeclareFontShape{U}{mathx}{m}{n}{
	<5> <6> <7> <8> <9> <10>
	<10.95> <12> <14.4> <17.28> <20.74> <24.88>
	mathx10
}{}
\DeclareSymbolFont{mathx}{U}{mathx}{m}{n}
\DeclareMathAccent{\widecheck}{0}{mathx}{"71}
\DeclareMathAccent{\wideparen}{0}{mathx}{"75}
\newcommand{\leqnomode}{\tagsleft@true}
\newcommand{\reqnomode}{\tagsleft@false}
\numberwithin{equation}{section}
\newcommand{\dd}{\mathrm{d}}
\newcommand{\R}{\mathbb{R}}
\newcommand{\N}{\mathbb{N}}
\newcommand{\Ls}{\mathcal{L}}
\theoremstyle{theorem}
\newtheorem{theorem}{\sc \textbf{Theorem}}[section]  
\newtheorem{proposition}[theorem]{\sc \textbf{Proposition}}   
\newtheorem{corollary}[theorem]{\sc \textbf{Corollary}}        
\newtheorem{lemma}[theorem]{\sc \textbf{Lemma}}
\renewcommand{\approx}{ \asymp}
\theoremstyle{remark}
\newtheorem{definition}[theorem]{\sc \textbf{Definition}}
\newtheorem{remark}[theorem]{\sc \textbf{Remark}}
\DeclareFontFamily{T1}{calligra}{}
\DeclareFontShape{T1}{calligra}{m}{n}{<->s*[1.44]callig15}{}
\DeclareMathAlphabet\mathcalligra   {T1}{calligra} {m} {n}
\DeclareMathAlphabet\mathzapf       {T1}{pzc} {mb} {it}
\DeclareMathAlphabet\mathchorus     {T1}{qzc} {m} {n}
\DeclareMathAlphabet\mathrsfso      {U}{rsfso}{m}{n}
\newcommand{\myitem}[1]{%
	\item[#1]\protected@edef\@currentlabel{#1}%
}
\begin{document}

	\title[Pointwise convergence to initial data on symmetric spaces]{Pointwise convergence to initial data for some evolution equations on symmetric spaces} 
	
	\author[T.\ Bruno]{Tommaso Bruno}
	\address{Dipartimento di Matematica, Universit\`a degli Studi di Genova\\ Via Dodecaneso 35, 16146 Genova, Italy}
	\email{brunot@dima.unige.it}
	
	\author[E.\ Papageorgiou]{Effie Papageorgiou}
	\address{Institut f{\"u}r Mathematik, Universit\"at Paderborn, Warburger Str. 100, D-33098
		Paderborn, Germany}
	\email{papageoeffie@gmail.com}

	\keywords{Symmetric spaces; heat equation; Poisson equation; extension problem; vector valued weak type $(1,1)$; local maximal function; pointwise convergence}
	\thanks{{\em Math Subject Classification} 22E30, 35B40, 26A33, 58J47, 35C15.}
	\thanks{The first-named author was partially supported by the 2022 INdAM--GNAMPA grant {\em Generalized Laplacians on continuous and discrete structures} (CUP\_E55F22000270001). The second-named author is funded by the Deutsche Forschungsgemeinschaft (DFG, German Research Foundation)--SFB-Gesch{\"a}ftszeichen --Projektnummer SFB-TRR 358/1 2023 --491392403.  This work was also partially supported by the Hellenic Foundation for Research and Innovation, Project HFRI-FM17-1733.} 
	
	\begin{abstract}
		Let $\Ls$ be either the Laplace--Beltrami operator, its shift without spectral gap, or the distinguished Laplacian on a symmetric space of noncompact type $\mathbb{X}$ of arbitrary rank. We consider the heat equation, the fractional heat equation, and the Caffarelli--Silvestre extension problem associated with $\Ls$, and in each of these cases we characterize the weights $v$ on $\mathbb{X}$ for which the solution converges pointwise a.e.\ to the initial data when the latter is in $L^{p}(v)$, $1\leq p < \infty$. As a tool, we also establish vector-valued weak type $(1,1)$ and $L^{p}$ estimates ($1<p<\infty$) for the local Hardy--Littlewood maximal function on $\mathbb{X}$.
	\end{abstract}
	
	\maketitle

	\section{Introduction}
	In recent years, the problem of characterizing the measures $\nu$ for which the solution to a given Cauchy problem converges pointwise almost everywhere to the initial data when this is in $L^{p}(\nu)$ has been attracting a great deal of attention. The heat equation, the Poisson equation, and more generally the fractional heat equation, among others, have been studied for several differential operators in a variety of settings, see e.g.~\cite{HTV, GHSTV, A-FST, C, A-RBB}. In this paper we consider this question on a symmetric space of the noncompact type of arbitrary rank, for three problems associated either to the Laplace--Beltrami operator, to its shift without spectral gap, or the distinguished Laplacian: the heat equation~\cites{AJ99, AO}, the fractional heat equation~\cite{G61,GS04}, and the Caffarelli--Silvestre extension problem~\cite{BanEtAl, BP2022}. 
	
	To be more precise, let $\mathbb{X} = G/K$ be a symmetric space of noncompact type and $\Delta$ be its positive Laplace--Beltrami operator whose spectral gap equals $\| \rho\|^{2}$ (more details about the notation will be discussed later). We consider the non-negative operator
	\[
	\mathcal{L}_{\zeta}=\Delta-\|\rho\|^2+\zeta^2, \qquad \zeta \in [0,\|\rho\|^{2}],
	\]
	and the associated Cauchy problems on $\mathbb{X}$, for $t>0$, $\alpha \in (0,2)$ and $\sigma \in(0,1)$,
	\begin{equation}\label{Cauchyintro}
		\begin{cases}	
			\partial_t u + \mathcal{\Ls}_{\zeta}u=0\\
			u( \, \cdot \, ,0) = f,
		\end{cases} \quad 
		\begin{cases}
			\partial_t u + \Ls_{\zeta}^{\alpha/2}u = 0\\
			u(\, \cdot \, ,0)=f,
		\end{cases} \: \:
		\begin{cases}
			\mathcal{L}_{\zeta} u - \frac{(1-2\sigma)}{t} \partial_{t} u -  \partial^{2}_{tt} u=0 \\
			u( \, \cdot \, ,0)=f.
		\end{cases}
	\end{equation}
	It is well known that the solution to each of these problems can be expressed by means of a positive bi-$K$-invariant right-convolution kernel $\psi_{t}$ on $G$, namely -- provided $f$ is sufficiently regular,
	\[
	u(x ,t) = f\ast \psi_t(x)=\int_{G}f(y)\psi_t(y^{-1}x)\,\diff y, \qquad x\in G,
	\]
	where $\dd y$ is a Haar measure on $G$. In particular, since $u = \e^{-t\Ls_{\zeta}} f$ for the heat equation and $u = \e^{-t\Ls_{\zeta}^{\alpha/2}} f$ for the fractional heat equation, $\psi_{t}$ is respectively the heat kernel and the so-called fractional heat kernel, which encompasses the Poisson kernel as this corresponds to $\alpha = 1$, for $\Ls_{\zeta}$. The case of the Caffarelli--Silvestre extension problem is more delicate, and will be discussed in due course.
	
	Denote now by $\mu$ the $G$-invariant measure on $\mathbb{X}$ induced by $\dd y$, by $L^{p}$ the standard Lebesgue space on $\mathbb{X}$ with respect to $\mu$, and by $L^{p}(v)$, for a positive and locally integrable function $v$ which we call \emph{weight}, the Lebesgue space with respect to the measure with density $v$ with respect to $\mu$. Then, we look for those weights $v$ such that $u(\, \cdot \, , t) \to f$ for $t \to 0^{+}$ $\mu$-a.e.\ in $\mathbb{X}$ whenever $f\in L^{p}(v)$. It is classical that these properties hold for $f\in L^{p}$, $1\leq p<\infty$, as they are related to the $L^{p}$ boundedness (or weak type $(1,1)$ boundedness if $p=1$) of the corresponding maximal operator
	\begin{equation}\label{maxintro}
		f\mapsto \sup_{t>0} | f* \psi_{t}|,
	\end{equation}
	or even to its "local" version, where the supremum is taken only for small times. This amounts to considering $v=1$. For general weights $v$ and $f\in L^{p}(v)$, a particular instance of our results reads as follows.
	\begin{theorem}\label{teointro}
		Suppose $1\leq p <\infty$ and let $v$ be a weight. If $u(\, \cdot \, , t) =f*\psi_{t}$ is the solution to any of the Cauchy problems~\eqref{Cauchyintro}, then $\lim_{t\to 0^{+}} u(\, \cdot \, ,t) =f$  $\mu$-a.e.\ for all $f\in L^{p}(v)$ if and only if there exists $t_0>0$ such that $\psi_{t_{0}}v^{-\frac{1}{p}} \in L^{p'}$.
	\end{theorem}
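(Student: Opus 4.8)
The plan is to prove the two implications separately, after reducing to small times: since $u(\,\cdot\,,t)$ is smooth and jointly continuous in $(x,t)$ for $t>0$, only the \emph{local} maximal operator $\mathcal{M}^{*}_{t_{0}}f:=\sup_{0<t<t_{0}}|f\ast\psi_{t}|$ matters. Throughout I use the known Gaussian-type pointwise bounds for the heat kernel of $\mathcal{L}_{\zeta}$ on $\mathbb{X}$ and, via the subordination formulas, for the fractional heat and Caffarelli--Silvestre kernels: each $\psi_{t}$ is bounded and bounded away from zero on compacta, and $\sup_{0<t\le t_{0}}\psi_{t}\le C\,\psi_{t_{0}}$ off a fixed ball $B(o,R_{0})$ (so in particular $\psi_{t_{0}}v^{-1/p}\in L^{p'}$ forces the same for all smaller $t$). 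For the sufficiency, assume $\psi_{t_{0}}v^{-1/p}\in L^{p'}$. Since $\psi_{t_{0}}$ is bounded below on compacta this gives $v^{-p'/p}\in L^{1}_{\mathrm{loc}}$, hence $L^{p}(v)\hookrightarrow L^{1}_{\mathrm{loc}}(\mathbb{X})$ and $C_{c}(\mathbb{X})$ is dense in $L^{p}(v)$, where the convergence $u(\,\cdot\,,t)\to f$ is the classical approximate-identity statement for each of the three problems. The key point is the pointwise bound
\[
\mathcal{M}^{*}_{t_{0}}f(x)\ \le\ C\,\mathcal{M}_{\mathrm{loc}}(|f|)(x)\ +\ C\,(|f|\ast\psi_{t_{0}})(x),
\]
$\mathcal{M}_{\mathrm{loc}}$ being the local Hardy--Littlewood maximal function on $\mathbb{X}$: the part of $\psi_{t}$ ($t\le t_{0}$) on $B(o,R_{0})^{c}$ is absorbed by $\psi_{t_{0}}$ via the comparison above, while its part on $B(o,R_{0})$ is, uniformly in $t$, an $L^{1}$-bounded radially decreasing approximate identity (note $\int\psi_{t}\,\dd\mu=\mathrm{e}^{t(\|\rho\|^{2}-\zeta^{2})}$ is bounded for $t\le t_{0}$), hence controlled by $\mathcal{M}_{\mathrm{loc}}$ through the usual annular decomposition. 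Fixing a ball $B$ and a slightly larger ball $B'$, on $B$ the first term is controlled via the vector-valued weak type $(1,1)$ estimate for $\mathcal{M}_{\mathrm{loc}}$ applied to $f\mathbf{1}_{B'}\in L^{1}(\mu)$ together with $\|f\mathbf{1}_{B'}\|_{1}\le\|f\|_{L^{p}(v)}\|v^{-1/p}\mathbf{1}_{B'}\|_{p'}$, and the second via $\|(|f|\ast\psi_{t_{0}})\mathbf{1}_{B}\|_{1}\lesssim_{B}\|f\|_{L^{p}(v)}\|\psi_{t_{0}}v^{-1/p}\|_{p'}$. Splitting $f=g+h$ with $g\in C_{c}(\mathbb{X})$, $\|h\|_{L^{p}(v)}<\eps$, this yields $\mu\big(\{x\in B:\limsup_{t\to0^{+}}|u(x,t)-f(x)|>\lambda\}\big)\lesssim_{B}\eps/\lambda$; letting $\eps\to0$ and exhausting $\mathbb{X}$ by balls gives a.e.\ convergence.

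For the necessity, if $v^{-p'/p}\notin L^{1}_{\mathrm{loc}}$ near some point one is done: a nonnegative $f\in L^{p}(v)$ failing to be integrable there has $u(\,\cdot\,,t)\equiv+\infty$ on a whole neighbourhood for every $t>0$, so convergence fails on a set of positive measure. Assume then $v^{-p'/p}\in L^{1}_{\mathrm{loc}}$ and that $u(\,\cdot\,,t)\to f$ $\mu$-a.e.\ for every $f\in L^{p}(v)$. Along $t_{n}=1/n$, convergence of $\{f\ast\psi_{1/n}(x)\}_{n}$ forces $\sup_{n}|f\ast\psi_{1/n}(x)|<\infty$ for a.e.\ $x$ and every $f$; by a Nikishin-type maximal principle (applicable since $\mathbb{X}$ is $\sigma$-finite) there is a set $E\subseteq\mathbb{X}$ of positive $\mu$-measure on which $\sup_{n}|f\ast\psi_{1/n}|$ is finite for every $f\in L^{p}(v)$. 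Fix $x_{*}\in E$. For each $n$, $f\mapsto\int_{G}f(y)\,\psi_{1/n}(y^{-1}x_{*})\,\dd y$ is an everywhere-defined positive linear functional on the Banach lattice $L^{p}(v)$, hence bounded, so by the sharpness of H\"older's inequality $\int_{\mathbb{X}}\psi_{1/n}(y^{-1}x_{*})^{p'}\,v(y)^{-p'/p}\,\dd\mu(y)<\infty$ for all $n$. Finally, join $x_{*}$ to the origin $o$ by a chain of $k\asymp d(o,x_{*})$ points at consecutive distances below a fixed constant, and iterate a two-point comparison of the form $\psi_{s}(y^{-1}x')^{p'}\le C\,\psi_{2s}(y^{-1}x)^{p'}$ (uniform in $y$, for $d(x,x')$ below that constant, the times staying bounded below along the chain), halving the time at each step: this produces $\int_{\mathbb{X}}\psi_{t_{0}}(y^{-1}o)^{p'}v(y)^{-p'/p}\,\dd\mu(y)<\infty$ for $t_{0}=1/(2^{k}n)$ with $n$ large, i.e.\ $\psi_{t_{0}}v^{-1/p}\in L^{p'}$. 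When $p=1$ the condition reads $\psi_{t_{0}}\le C\,v$ and the whole argument streamlines.

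The main obstacle is twofold. On the analytic side it is the pointwise control of $\mathcal{M}^{*}_{t_{0}}$ by $\mathcal{M}_{\mathrm{loc}}$ plus a fixed-time convolution, carried out simultaneously for the three families of kernels---most delicately for the Caffarelli--Silvestre kernel, whose $t$-dependence enters through a Bessel-type subordination of the heat semigroup of $\mathcal{L}_{\zeta}$---together with the weak type $(1,1)$ bound for $\mathcal{M}_{\mathrm{loc}}$ on the exponentially growing space $\mathbb{X}$, which is exactly why the vector-valued local maximal estimates are developed as a separate tool. On the functional-analytic side it is the necessity step: converting a.e.\ convergence into an a.e.\ finiteness statement through Nikishin's principle, and then transporting that finiteness---via the two-point kernel comparison---from an uncontrolled set of positive measure to the origin, where the stated condition lives.
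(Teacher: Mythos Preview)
Your sufficiency argument is essentially the paper's: the same pointwise splitting $\mathcal{T}^{*}_{R}f\lesssim \mathcal{M}_{R}f+T_{R}f$ (this is exactly properties~\ref{P3} and~\ref{P5} combined), followed by a density argument. In fact your localisation to balls is more economical than what the paper does---the paper produces an auxiliary weight $u$ so that $\mathcal{T}^{*}_{R}\colon L^{p}(v)\to L^{p,\infty}(u)$ (Theorem~\ref{teo:main}, $(5)\Rightarrow(1)\Rightarrow(2)$), and it is \emph{that} construction, via Lemma~\ref{lemmamaximal}, which requires the vector-valued weak type $(1,1)$ of Proposition~\ref{vvw11}. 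Your direct argument on a fixed ball only needs the \emph{scalar} weak $(1,1)$ bound for $\mathcal{M}_{\mathrm{loc}}$, so your reference to the vector-valued inequality is superfluous.

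The necessity direction, however, has a genuine gap and is also more convoluted than necessary. The Nikishin step---extracting a single point $x_{*}$ at which $\sup_{n}|f\ast\psi_{1/n}(x_{*})|<\infty$ for \emph{every} $f\in L^{p}(v)$---requires the operators $T_{n}$ (or their maximal function) to be continuous from $L^{p}(v)$ into $L^{0}(\mu)$, and you have no such continuity a priori: that $T_{n}f$ is even finite a.e.\ for a fixed $n$ and general $f\in L^{p}(v)$ is essentially equivalent to what you are trying to prove. The paper avoids this entirely. The kernel comparison it establishes (property~\ref{P4}) is \emph{global}, not just for nearby points: for any $x_{0},x$ and all $y$, one has $\psi_{t}(y^{-1}x)\le C(x_{0},x,t)\,\psi_{c_{\gamma}t}(y^{-1}x_{0})$. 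Thus, for a fixed $f\ge 0$ one picks any $x_{0}=x_{0}(f)$ where $T_{t_{0}}f(x_{0})<\infty$ and concludes in one step that $T_{t_{0}/c_{\gamma}}f(e)<\infty$. Since this holds for every $f$, the positive linear functional $f\mapsto T_{t_{0}/c_{\gamma}}f(e)$ is everywhere finite on the Banach lattice $L^{p}(v)$, hence bounded, and duality gives $\psi_{t_{0}/c_{\gamma}}v^{-1/p}\in L^{p'}$ (Proposition~\ref{propequiv}, $(c)\Rightarrow(d)$). No Nikishin principle, no chain: the $f$-dependence of the base point is absorbed because the conclusion is always read off at the origin. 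Your chain-of-balls idea would work if you only had a short-range comparison, but for these kernels the long-range one is available and should be used.
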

	The conditions above have a number of other equivalent characterizations which highlight the role of the maximal operator~\eqref{maxintro}, or rather its local version, but we refer to Theorem~\ref{teo:main} below for a complete statement. Let us say that such results are the symmetric space analogues for $\Ls_{\zeta}$ of those known in other settings for certain differential operators including the Laplacian~\cite{HTV}, the Hermite and the Ornstein--Uhlenbeck operators~\cite{GHSTV, A-FST}, the Bessel operator~\cite{C}, and the combinatorial Laplacian on a homogeneous tree~\cite{A-RBB}. To the best of our knowledge, this is the first time that such problems are considered in a continuous (locally Euclidean) setting of exponential growth. And though the general scheme of the proof is close to that of the seminal paper~\cite{HTV} as in the aforementioned~\cite{GHSTV, A-FST, C, A-RBB}, the pointwise behavior of local maximal operators and the geometry at infinity mix here in a novel way. It sounds natural to speculate to what extent the results of this paper can be extended to more general manifolds; but we leave this question to future work.
	
	Along the way to proving Theorem~\ref{teointro}, we also establish vector-valued $L^{p}$ ($1<p<\infty$) and weak type $(1,1)$ estimates  for the local Hardy--Littlewood maximal function on $\mathbb{X}$; see Proposition~\ref{vvw11}. This result seems the first of its kind in a nondoubling setting, and as such we believe that it has an independent interest.
	
	\smallskip
	
	In a similar manner, we also obtain a result in the exact same spirit of Theorem~\ref{teointro} (and of Theorem~\ref{teo:main} and Proposition~\ref{vvw11}) for the Cauchy problems~\eqref{Cauchyintro} associated with the distinguished Laplacian
	\[ 
	\tilde{\Delta}= \tilde{\delta}^{1/2} \, \Ls_{0} \, \tilde{\delta}^{-1/2}
	\]
	on the solvable group $S=N(\exp{\mathfrak{a}})$ which appears in the Iwasawa decomposition $N(\exp{\mathfrak{a}})K$ of $G$, where $\tilde{\delta}$ is the modular function of $S$. 
	
	\bigskip
	
	We finally summarize  the structure of the paper. The following Section~\ref{sec:SS} is devoted to describing the setting of the paper and to recalling the basic features of the analysis on symmetric spaces. In Section~\ref{Sec:Cauchy} we discuss the three Cauchy problems~\eqref{Cauchyintro} for $\Ls_{\zeta}$ and the kernels associated to their solutions, and obtain (in some cases recall) pointwise upper and lower bounds for these. In Section~\ref{Sec:maximal} we prove vector-valued inequalities for the local Hardy-Littlewood maximal function which will be needed at a later stage. In Section~\ref{sec:class} we introduce a suitable class of kernels to which the three aforementioned kernels belong and we discuss the pointwise convergence results for that class in Section~\ref{sec:pc}. In the final Section~\ref{sec:distinguished} we discuss analogous results for the Cauchy problems associated with the distinguished Laplacian $\tilde{\Delta}$.
	
	\section{Analysis on symmetric spaces}\label{sec:SS}
	
	Let $G$ be a semi-simple, connected, noncompact  Lie group with finite center, and $K$ be a maximal compact subgroup of $G$. The homogeneous space $\mathbb{X}=G/K$ is a Riemannian symmetric space of noncompact type, whose generic element we denote by $xK$ for $x\in G$. We identify functions on the symmetric space $\mathbb{X}$ with right-$K$-invariant functions on $G$, and left-$K$-invariant functions on $\mathbb{X}$ with bi-$K$-invariant functions on $G$ in the usual way.

	Let $\mathfrak{g}$ be the Lie algebra of $G$ and $\mathfrak{g}=\mathfrak{k}\oplus\mathfrak{p}$ be its Cartan decomposition. The Killing form of $\mathfrak{g}$ induces 
	a $K$-invariant inner product $\langle\,\cdot \,,\,\cdot \,\rangle$ on $\mathfrak{p}$, 
	hence a $G$-invariant Riemannian metric on $\mathbb{X}$. We denote by $d$ the Riemannian distance on $\mathbb{X}$.
	
	Fix a maximal abelian subspace $\mathfrak{a}$ in $\mathfrak{p}$. We identify $\mathfrak{a}$ with its dual $\mathfrak{a}^{*}$ 
	by means of the inner product inherited from $\mathfrak{p}$. We denote by $\Sigma \subset\mathfrak{a}$ the root system of $(\mathfrak{g},\mathfrak{a})$ and by $W$ the associated Weyl group. Choose a positive Weyl chamber $\mathfrak{a}^{+}\subset\mathfrak{a}$, and let $\Sigma^{+}$ and $\Sigma_{r}^{+}$ be respectively the corresponding set of positive roots and positive reduced (i.e., indivisible) roots. We denote by $n$ the dimension of $\mathbb{X}$, by $\nu$ be its dimension at infinity, and by $\ell$ its rank, i.e.\ the dimension of $\mathfrak{a}$. In particular
	\[
	n = \ell+\sum\nolimits_{\alpha \in \Sigma^{+}}\,m_{\alpha}, \qquad \qquad \nu = \ell+2|\Sigma_{r}^{+}|,
	\]
	where $m_{\alpha}$ denotes the dimension of the positive root subspace
	\[
	\mathfrak{g}_{\alpha}= \{X\in\mathfrak{g}\,|\,[H,X]=\langle{\alpha,H}\rangle{X}, \quad \forall\,H\in\mathfrak{a}\}.
	\]
	
	Let $\mathfrak{n}$ be the nilpotent Lie subalgebra 
	of $\mathfrak{g}$ associated with $\Sigma^{+}$ 
	and let $N = \exp \mathfrak{n}$ be the corresponding subgroup of $G$. The Iwasawa and Cartan decompositions of $G$ are respectively
	\[
	G=N (\exp\mathfrak{a}) K , \qquad 		G = K (\exp\overline{\mathfrak{a}^{+}}) K.
	\]
	Denote by $A(x)\in\mathfrak{a}$ and $x^{+}\in\overline{\mathfrak{a}^{+}}$
	the associated components of $x\in{G}$ in these two decompositions respectively, and by $|x|=\|x^{+}\|$. Viewed on $\mathbb{X}=G/K$, $|\cdot|$ is the distance to the origin $o=eK$, namely
	\[
	|x| = d(xK, eK), \qquad |y^{-1}x| = d(xK, yK)
	\]
	(the second equality by the $G$-invariance of the metric on $\mathbb{X}$). For all $x,y\in{G}$, we have
	\begin{align}\label{S2 Distance}
		\|A(x)\|\,\le\,|x| 
	\end{align}
	and
	\begin{align}\label{S2 flat}
		\|x^{+}-y^{+}\|\leq d(xK,yK), \quad \|(yx)^{+}-y^{+}\|, \;	\|(xy)^{+}-y^{+}\|\leq d(xK,eK), 
	\end{align}
	see for instance~\cite[Lemma 2.1.2]{AJ99} or~\cite[Lemma 2.1]{MMV}.
	
	We denote by $\dd x$ the Haar measure on $G$, and by $\dd k$ the normalized Haar measure of $K$. The Haar measure of $G$ induces a $G$-invariant measure $\mu$ on $\mathbb{X}$ for which, for good enough $f$,
	\[
	\int_{\mathbb{X}} f \, \dd \mu = \int_{G} f(x) \, \dd x.
	\]
	In the Cartan decomposition, in particular, the integral over $G$ takes the form
	\begin{align*}
		\int_{G}f(x)\, \dd x =\,\int_{K}\int_{\mathfrak{a}^{+}}\int_{K}f(k_{1}(\exp x^{+})k_{2})\, \delta(x^{+})\,\diff{k_1}\,\diff{x^{+}}\,  \diff{k_2},
	\end{align*}
	where  $\dd x^{+}$ is a suitable multiple of the Lebesgue measure on $\overline{\mathfrak{a}^{+}}$, and
	\[
	\delta(x^{+})\,
	=\,\prod_{\alpha\in\Sigma^{+}}\,
	(\sinh\langle{\alpha,x^{+}}\rangle)^{m_{\alpha}}\,
	\asymp\,
	\prod_{\alpha\in\Sigma^{+}}
	\bigg( 
	\frac{\langle\alpha,x^{+}\rangle}
	{1+\langle\alpha,x^{+}\rangle}
	\bigg)^{m_{\alpha}}\,
	\e^{2\langle\rho,x^{+}\rangle}
	\qquad x^{+}\in\overline{\mathfrak{a}^{+}},
	\]
	where  $\rho\in\mathfrak{a}^{+}$ is the half-sum of all positive roots  $\alpha \in \Sigma^{+}$ counted with their multiplicities $m_{\alpha}$, i.e.,
	\begin{align*}
		\rho =
		\frac{1}{2}\,\sum\nolimits_{\alpha\in\Sigma^{+}} \,m_{\alpha}\,\alpha.
	\end{align*}	
	For $r>0$, finally, define the balls
	\begin{align*}
		B(o, r)&=\{xK\in \mathbb{X}: \; d(xK, eK)\leq r\},\\
		B(e, r)&=\{ x \in G: \; |x|\leq r\}.
	\end{align*}
	We recall (see e.g.~\cite{Kni97}) that for all $R>0$
	\begin{align}\label{volgrowth}
		\mu(B(xK,r))\approx_{R}
		\begin{cases}
			r^n \; &\text{if } 0<r\leq R,
			\\[7pt]
			r^{\frac{\ell-1}{2}}\, \e^{2\|\rho\|r} \; &\text{if } r>R,
		\end{cases}
	\end{align}
	whence the metric measure space $(\mathbb{X}, d, \mu)$ is locally but not globally doubling. Here and all throughout, we write $f \approx g$ for two positive functions $f$ and $g$ whenever there exists $C > 0$ (depending on $\mathbb{X}$ and other circumstantial parameters) such that $C^{-1}g \leq f \leq C g$. Analogously, we shall write $f\lesssim g$ if there exists such a $C$ such that $f\leq C g$. In order to stress the dependence of $C$ on a specific parameter, say $\kappa$, we write $f \approx_{\kappa} g$ or $f\lesssim_{\kappa} g$ as in~\eqref{volgrowth}.

	For $1\leq p\leq \infty$, we shall denote by $L^{p}$ the standard Lebesgue spaces with respect to $\mu$, and by $\| \cdot \|_{p}$ their norm. If $v$ is a weight, $L^{p}(v)$ will stand for the Lebesgue space with respect to the measure with density $v$ with respect to $\mu$, i.e.\ $\dd v= v\, \dd \mu$.  Its norm will be $\| \cdot \|_{L^{p}(v)}$. We shall denote by $C_{c}$, and $C_{c}^{\infty}$ respectively, the sets of continuous compactly supported and smooth and compactly supported functions on $\mathbb{X}$.

	%%%%%%%%%%%%%%%%%%%%%%%%%%%%%%%%%%%%%%%%%%%%%%%%%%%%%%%%%%%%%%%%%%%%%%%%%%%%%%%%
	
	\subsection{Spherical analysis and $\Ls_{\zeta}$}
	Let $\mathcal{S}(K \backslash{G}/K)$ be the Schwartz space of bi-$K$-invariant
	functions on $G$. The spherical Fourier transform (or Harish--Chandra transform)
	$\mathcal{H}$ is defined to be
	\begin{equation}\label{SFtransform}
		\mathcal{H}f(\lambda)\,
		=\,\int_{G}\varphi_{-\lambda}(x)\,f(x) \, \diff{x}
		\qquad \lambda\in\mathfrak{a},\quad f\in\mathcal{S}(K\backslash{G/K}),
	\end{equation}
	where $\varphi_{\lambda}\in C^{\infty}(K\backslash{G/K})$ is the spherical function of index $\lambda$.
	If $\mathcal{S}(\mathfrak{a})^{W}$ stands for the subspace 
	of $W$-invariant functions in the Schwartz space $\mathcal{S}(\mathfrak{a})$, then $\mathcal{H}$ is an isomorphism between $\mathcal{S}(K\backslash{G/K})$ 
	and $\mathcal{S}(\mathfrak{a})^{W}$. The inverse spherical Fourier transform 
	is given by
	\[
	f(x)= C_0 \int_{\mathfrak{a}}\,\mathcal{H}f(\lambda) \,\varphi_{\lambda}(x)\,|\mathbf{c(\lambda)}|^{-2}\,\diff{\lambda}\qquad x\in{G}, \quad \mathcal{H}f\in\mathcal{S}(\mathfrak{a})^{W},
	\]
	where $C_0$ is a suitable constant which depends only on $\mathbb{X}$, and $|\mathbf{c(\lambda)}|^{-2}$ is the  so-called Plancherel density which satisfies, for some $m_{0}>0$,
	\begin{equation}\label{HCest}
		|\textbf{c}(\lambda)|^{-2}\lesssim (1+\|\lambda\|)^{m_{0}}, \qquad  \lambda \in \mathfrak{a}.
	\end{equation}
	
	For $\lambda\in\mathfrak{a}$, the spherical function $\varphi_{\lambda}$ is a smooth bi-$K$-invariant eigenfunction of all 
	$G$-invariant differential operators on $\mathbb{X}$, in particular of the (positive)
	Laplace--Beltrami operator $\Delta$, i.e.\
	\[
	\Delta\varphi_{\lambda} = (\|\lambda\|^{2}+\|\rho\|^2)\,\varphi_{\lambda}.
	\]
	%%
	%It is symmetric in the sense that
	%$\varphi_{\lambda}(x^{-1})=\varphi_{-\lambda}(x)$,
	%and is given by the integral representation
	%\[
	%	\varphi_{\lambda}(x)\, 
	%	=\,\int_{K}\e^{\langle{i\lambda+\rho,\,A(kx)}\rangle}\,\diff{k}.
	%\]
	%%
	We recall moreover that for all $\lambda\in\mathfrak{a}$ the function $\varphi_{\lambda}$ is controlled by the ground spherical
	function $\varphi_{0}$, and that this satisfies the global estimate
	\begin{equation}\label{phi_0 global}
	\varphi_{0}(x) \asymp \bigg\{ \prod_{\alpha\in\Sigma_{r}^{+}}  (1+\langle\alpha,x^{+}\rangle)\bigg\}\, \e^{-\langle\rho, x^{+}\rangle}, \qquad x\in G,
	\end{equation}
	(cf.~\cite[Proposition 2.2.12]{AJ99}). Observe that this implies that 
	\begin{equation}\label{phi_0 upper}
		\e^{-\|\rho\||x|}\lesssim\varphi_{0}(x)\lesssim  (1+|x|)^{|\Sigma_r^{+}|}\e^{-\rho_{\min}|x|}, \qquad x\in G,
	\end{equation}
	where 
	\[
	\rho_{\min} =\min_{x^{+}\in\mathfrak{a}^{+}, \; \|x^{+}\|=1}  \langle \rho, x^{+} \rangle \in(0, \|\rho\|].
	\]
	We will also need that
	\begin{align}\label{phi split}
		\varphi_{0}(x^{-1}y)\,
		=\,\int_{K}\e^{\langle{\rho,A(kx})\rangle}\,
		\e^{\langle{\rho,A(ky})\rangle}\,\diff{k},
	\end{align}
	see e.g.~\cite[Ch.~III, Theorem~1.1]{Hel1994}. From now on, we shall denote by $\Ls_{\zeta}$ the non-negative operator
	\[
	\mathcal{L}_{\zeta}=\Delta-\|\rho\|^2+\zeta^2, \qquad \zeta \in [0,\|\rho\|],
	\]
	which is a shift of the Laplace--Beltrami operator. Notice that its $L^{2}$ spectrum is $\sigma(\Ls_{\zeta}) = [\zeta^{2},\infty)$, and in particular $\Ls_{\|\rho\|} = \Delta$. If not otherwise stated, $\zeta$ will always lie in $[0,\|\rho\|]$ in the following.

	%%%%%%%%%%%%%%%%%%%%%%%%%%%%%%%%%%%%%%%%%%%%%%%%%%%%%%%%%%%%%%%%%%%%%%%%%%%%%%%%	

	\section{Cauchy problems for some evolution equations}\label{Sec:Cauchy}
	In this section we introduce and discuss the Cauchy problems~\eqref{Cauchyintro} in detail.
	\subsection{Heat equation and heat kernel} 
	We consider the Cauchy problem for the heat equation associated to $\Ls_{\zeta}$ 
	\begin{equation*}
		\begin{cases}
			\partial_t u + \mathcal{\Ls}_{\zeta}u=0,\qquad t>0,\\
			u(\, \cdot \, ,0) = f
		\end{cases}
	\end{equation*}
	and recall that its solution, for sufficiently regular $f$, is given by $u(\, \cdot \, ,t) = \e^{-t\Ls_{\zeta}}f$. If $H_{t} $ stands for the integral kernel of the operator $\e^{-t\Delta}$, then the function
	\[
	h_{t}(xK) = H_{t}(xK,eK), \qquad x\in G,
	\]
	is called the \emph{heat kernel} on $\mathbb{X}$, and it is a right convolution kernel in the sense that 
	\[
	\e^{-t\Delta} f = f * h_{t}.
	\]
	Following the identification of functions on $\mathbb{X}$ with right-$K$-invariant functions on $G$, we recall that $h_{t}$ is a positive bi-$K$-invariant function on $G$, hence determined by its restriction to the positive Weyl chamber. By the inversion formula of the spherical Fourier transform, one has
	\[
	h_{t}(x)=C_{0}\,\int_{\mathfrak{a}}\,\e^{-t(\|\lambda\|^{2}+\|\rho\|^{2})}\varphi_{\lambda}(x)\,|\mathbf{c(\lambda)}|^{-2}\,\diff{\lambda}, \qquad x\in G,\: t>0.
	\]
	It is known that $h_{t}$ is also symmetric, i.e. $h_t(x)=h_t(x^{-1})$. All the above holds in turn for the heat kernel $h_t^{\zeta}$ of $\Ls_{\zeta}$ as
	\begin{equation}\label{htzetaht}
		h_t^{\zeta}=\e^{(\|\rho\|^2-\zeta^2)t} \, h_t.
	\end{equation}
	Notice that $h_{t} = h_{t}^{\|\rho\|}$ and that
	\begin{equation}\label{zetageqrho}
		h_{t}^{\zeta}(x) \geq h_{t}^{\|\rho\|}(x),\qquad x\in G, \quad \zeta \in [0,\|\rho\|].
	\end{equation}
	By the bilateral estimates for $h_{t}$ shown in~\cites{AJ99, AO} and~\eqref{htzetaht},
	\begin{align}\label{S2heatkernel}
		h_{t}^{\zeta}(x)\,
		\asymp \, t^{-\frac{n}{2}}\,
		\bigg\{
		\prod_{\alpha\in\Sigma_{r}^{+}}
		(1+t+\langle{\alpha,x^{+}}\rangle)^{\frac{m_{\alpha}+m_{2\alpha}}{2}-1}
		\bigg\} \,\varphi_{0}(x)\,
		\e^{-\zeta^{2}t-\frac{|x|^{2}}{4t}}
	\end{align}
	for all $t>0$ and {$x\in G$}. For future convenience, we denote by $H_{t}^{\zeta}$ the integral kernel of the operator $\e^{-t\Ls_{\zeta}}$, so that $H_{t} = H_{t}^{\|\rho\|}$.

	\subsection{The Caffarelli--Silvestre extension problem} \label{subsection CS}
	The interest in the Caffarelli--Silvestre extension problem, which for $\sigma\in (0,1)$ reads
	\begin{align}\label{extension}
		\begin{cases}
			\mathcal{L}_{\zeta} u - \frac{(1-2\sigma)}{t} \partial_{t} u - \partial^{2}_{tt} u=0, \qquad  t>0, \\
			u(\, \cdot \, ,0)=f
		\end{cases}
	\end{align}
	originates from the study of the fractional powers $\mathcal{L}_{\zeta}^{\sigma}$, as we now illustrate.
	
	First, we note that for $t\in (0,1)$ and all $x\in G$, by an explicit computation using~\eqref{S2heatkernel} and~\eqref{phi_0 global} one has
	\[
	\|H_t(xK,\, \cdot \, )\|_2 \lesssim t^{-\frac{n}{4}}, \quad \|\partial_t H_t(xK, \, \cdot \, )\|_{2}\lesssim t^{-\frac{n}{4}-1}
	\]
	(for the time derivative, one may use the pointwise estimate~\cite[(3.1)]{AN92}). This together with the fact that
	\[
	H_t^{\zeta}=\e^{(\|\rho\|^2-\zeta^2)t}H_t\asymp H_t, \qquad t\in (0,1),
	\]
	and 
	\[
	\partial_t (\e^{(\|\rho\|^2-\zeta^2)t}H_t)=\e^{(\|\rho\|^2-\zeta^2)t}\big((\|\rho\|^2-\zeta^2)\,H_t+\partial_t H_t\big)
	\]
	implies that there is a constant $C>0$ such that for all $x\in G$ and $t\in (0,1)$
	\begin{equation}\label{kernel cond}
		\|H_t^{\zeta}(xK,  \, \cdot \,  )\|_{2}+\|\partial_t H_t^{\zeta}(xK, \, \cdot \,  )\|_{2} \leq C \, t^{-\frac{n}{4}-1}.
	\end{equation}
	By~\cite[Theorem 3.1 and \S 3.2]{BanEtAl}, for  $f\in \mathrm{Dom}(\mathcal{L}_{\zeta}^{\sigma})$, a solution to the Caffarelli--Silvestre extension problem~\eqref{extension} is given by
	\[
	T^{\sigma, \zeta}_tf:=u(\, \cdot \, , t)=\frac{t^{2\sigma}}{2^{2\sigma}\Gamma(\sigma)}\int_{0}^{+\infty}\e^{-\frac{t^2}{4s}}\, \e^{-s\mathcal{L}_{\zeta}}f\, \frac{\dd s}{s^{1+\sigma}},
	\]
	and the fractional Laplacian $\Ls_{\zeta}^{\sigma}$ on $\mathbb{X}$ can be recovered through
	\begin{equation*}\mathcal{L}_{\zeta}^{\sigma}f =-2^{2\sigma-1}\frac{\Gamma(\sigma)}{\Gamma(1-\sigma)}\lim_{t\rightarrow 0^{+}}t^{1-2\sigma}\frac{\partial u}{\partial t}(\, \cdot \,,t).
	\end{equation*}
	Owing to~\cite[Theorem 3.2]{BanEtAl} and~\eqref{kernel cond}, moreover, the operator $T_t^{\sigma, \zeta}$ admits a right convolution bi-$K$-invariant kernel
	\begin{equation}\label{kernelCS}
		Q_t^{\sigma, \zeta}(x)=\frac{t^{2\sigma}}{2^{2\sigma}\Gamma(\sigma)}\int_{0}^{+\infty} h_s^{\zeta}(x)\,\e^{-\frac{t^2}{4s}}\frac{\diff s}{s^{1+\sigma}}, \quad x\in G, \; t>0,
	\end{equation}
	which is positive and symmetric as so is $h_{s}^{\zeta}$. Since for $\sigma=1/2$ the problem~\eqref{extension} reduces to the Laplace equation for $\Ls_{\zeta}- \partial_{tt}^{2}$ and $(Q_t^{1/2, \zeta})$ is the Poisson kernel, we shall call $(Q_t^{\sigma, \zeta})$ the \emph{fractional Poisson kernel}.
	
	Upper and lower estimates for $Q_t^{\sigma, \|\rho\| }$ for $\sigma \in (0,1)$ were obtained in~\cite[Theorem 3.2]{BP2022} while for $Q_t^{1/2, 0}$ in~\cite{AJ99}. Since we shall need such estimates for all $\zeta \in [0,\|\rho\|]$, we prove them in the next proposition.
	\begin{proposition}\label{Q kernel estimates}
		(i) Suppose $0<\zeta \leq \|\rho\|$. Then for $\kappa >0$
		\begin{align}\label{Qzetapos}
			Q_t^{\sigma, \zeta}(x)\asymp_{\kappa,\zeta}	
			\begin{cases}
				t^{2\sigma}(t+|x|)^{-(n+2\sigma)} \; &\text{if } t+|x| \leq \kappa,
				\\[5pt]
				t^{2\sigma} (t+|x|)^{-\frac{\ell}{2}-|\Sigma_r^+|-\sigma-\frac{1}{2}}\, \varphi_0(x) \,\e^{-\zeta\sqrt{t^2+|x|^2}}  \; &\text{if } t+|x|\geq \kappa.
			\end{cases}
		\end{align}
		(ii) In the limit case $\zeta=0$, for $\kappa >0$ we have instead
		\begin{align}\label{Qzeta0}
			Q_t^{\sigma, 0}(x)\asymp_{\kappa}
			\begin{cases}
				t^{2\sigma}(t+|x|)^{-(n+2\sigma)} \; &\text{if } t+|x|\leq \kappa,
				\\[5pt]
				t^{2\sigma} (t+|x|)^{-\ell-2|\Sigma_r^+|-2\sigma}\, \varphi_0(x) \; &\text{if } t+|x| \geq \kappa.
			\end{cases}
		\end{align}
	\end{proposition}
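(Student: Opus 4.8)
The plan is to insert the bilateral heat kernel bounds~\eqref{S2heatkernel} into the subordination formula~\eqref{kernelCS} and estimate the resulting integral by a Laplace-type (saddle point) analysis, treating the small-time/short-distance regime and the large-time/long-distance regime separately. Since $h_s^\zeta$ is positive, the $\asymp$ in~\eqref{S2heatkernel} transfers directly to an $\asymp$ for $Q_t^{\sigma,\zeta}$ once we control the $s$-integral, so throughout it suffices to estimate
\[
I(t,x):=\int_0^{+\infty} s^{-\frac n2}\Big\{\prod_{\alpha\in\Sigma_r^+}(1+s+\langle\alpha,x^+\rangle)^{\frac{m_\alpha+m_{2\alpha}}2-1}\Big\}\,\varphi_0(x)\,\e^{-\zeta^2 s-\frac{|x|^2}{4s}}\,\e^{-\frac{t^2}{4s}}\,\frac{\diff s}{s^{1+\sigma}},
\]
and then multiply by $t^{2\sigma}$. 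The exponential weight is $\e^{-\zeta^2 s-\frac{t^2+|x|^2}{4s}}$, so it is natural to set $R=\sqrt{t^2+|x|^2}$ and split according to whether $R$ is small or large.

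First I would handle the regime $t+|x|\le\kappa$, equivalently $R\lesssim\kappa$. Here $\zeta^2 s$ is negligible for the relevant range of $s$, the polynomial bracket is $\asymp 1$ for $s\lesssim 1$ (and decays in the irrelevant large-$s$ tail), and $\varphi_0(x)\asymp 1$. So $I(t,x)\asymp\int_0^\infty s^{-\frac n2-1-\sigma}\e^{-R^2/(4s)}\,\diff s\asymp R^{-n-2\sigma}$ by the substitution $s\mapsto R^2/s$, which after multiplying by $t^{2\sigma}$ gives the first line of both~\eqref{Qzetapos} and~\eqref{Qzeta0} (the extra far tail of the $s$-integral, $\int_1^\infty$, is exponentially small when $\zeta>0$ and still $O(1)\lesssim R^{-n-2\sigma}$ when $\zeta=0$ since $R$ is bounded; one has to be a touch careful but it is routine). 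Note the answer in this regime does not see $\zeta$, which is why both parts agree there.

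The substantive case is $t+|x|\ge\kappa$, i.e.\ $R$ large. I expect this to be the main obstacle and the place requiring care. For $\zeta>0$ the exponent $\phi(s)=\zeta^2 s+R^2/(4s)$ has a genuine interior minimum at $s_*=R/(2\zeta)$ with $\phi(s_*)=\zeta R$ and $\phi''(s_*)\asymp \zeta^3/R$, so a Laplace/saddle-point estimate gives a Gaussian contribution $\asymp (R/\zeta^3)^{1/2}\e^{-\zeta R}$ times the value of the slowly varying prefactor at $s=s_*$; there the prefactor $s^{-n/2-1-\sigma}\prod_\alpha(1+s+\langle\alpha,x^+\rangle)^{\cdots}$ must be evaluated and combined with $\varphi_0(x)$ and the global estimate~\eqref{phi_0 global}, matching powers of $\langle\alpha,x^+\rangle$ versus $|x|$ versus $R$ (using $|x|\asymp R$ when $|x|$ is the dominant term, and handling the complementary zone $|x|\ll t$ where one instead has $R\asymp t$ and must check the claimed power of $(t+|x|)$ is still correct). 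This bookkeeping, keeping the $\kappa$- and $\zeta$-dependence of the implied constants explicit, is where all the work lies; the exponents $-\frac\ell2-|\Sigma_r^+|-\sigma-\frac12$ in~\eqref{Qzetapos} come out of collecting $-n/2-1-\sigma$ from the powers of $s$, $+\sum(\frac{m_\alpha+m_{2\alpha}}2-1)$ from the product, the $+\frac12$ from $\phi''$, and $n=\ell+\sum m_\alpha$, $\nu=\ell+2|\Sigma_r^+|$ from the dimension identities, together with the polynomial factor in $\varphi_0$. For $\zeta=0$ there is no interior critical point: $\phi(s)=R^2/(4s)$ is decreasing, so the integral $\int_0^\infty s^{-n/2-1-\sigma}(1+s+\cdots)^{\cdots}\e^{-R^2/(4s)}\,\diff s$ is governed by its behavior for large $s$ after the substitution $s\mapsto R^2/s$, and one integrates the polynomial bracket against $s^{-n/2-1-\sigma}\e^{-s/4}$ on $(0,\infty)$ with the $1+s+\langle\alpha,x^+\rangle$ factors contributing, in the end, $(t+|x|)^{-\ell-2|\Sigma_r^+|-2\sigma}\varphi_0(x)$ up to constants; this is exactly the $\zeta\to 0$ degeneration of the previous case where the Gaussian factor $\e^{-\zeta R}$ disappears and the power of $R$ doubles in the part coming from the missing $\phi''$ and the shift of the effective saddle to $s\asymp R^2$. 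In both parts the lower bound is obtained by restricting the $s$-integral to a fixed multiple of the relevant scale ($s\asymp s_*$ resp.\ $s\asymp R^2$) and using the two-sided nature of~\eqref{S2heatkernel}, so upper and lower bounds are genuinely of the same order, as claimed.
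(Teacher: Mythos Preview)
Your proposal is correct and follows essentially the same route as the paper: insert the bilateral heat bounds~\eqref{S2heatkernel} into the subordination formula~\eqref{kernelCS}, handle the short-range regime $t+|x|\le\kappa$ by a direct estimate of the resulting $s$-integral (splitting off the large-$s$ tail, which is $O(1)$ and hence dominated since $R$ is bounded), and in the long-range regime perform a Laplace/saddle-point analysis at $s_*=\sqrt{t^2+|x|^2}/(2\zeta)$ when $\zeta>0$, degenerating to the scale $s\asymp t^2+|x|^2$ when $\zeta=0$. The only cosmetic differences are that the paper obtains the small-regime lower bound by the monotonicity $Q_t^{\sigma,\zeta}\ge Q_t^{\sigma,\|\rho\|}$ and a citation to~\cite{BP2022}, and that for the $\zeta>0$ long-range case it refers to~\cite{AJ99,BP2022} rather than writing out the saddle-point bookkeeping you sketch; the underlying argument is the same.
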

	
	\begin{proof}
		We begin by assuming $t+|x|\leq \kappa$ and $\zeta \in [0,\|\rho\|]$.
		
		The lower bounds in~\eqref{Qzetapos} and~\eqref{Qzeta0} can be obtained by the lower estimates for $Q_t^{\sigma, \|\rho\|}$ obtained in~\cite[Theorem 3.2]{BP2022}. Indeed, as a consequence of the subordination formula~\eqref{kernelCS} and of~\eqref{zetageqrho}, one has
		\[
		Q_{t}^{\sigma, \zeta}(x) \geq Q_t^{\sigma, \|\rho\|}(x), \qquad x\in G. 
		\]
		As for the upper bounds, we split the subordination formula~\eqref{kernelCS} and get 
		\begin{align*}	
			Q_t^{\sigma, \zeta}(x)&\lesssim\ t^{2\sigma}\left( \int_{0}^{1}s^{-\frac{n}{2}}\,\e^{-\frac{t^2+|x|^2}{4s}}\frac{\diff s}{s^{1+\sigma}} +\int_{1}^{+\infty}\,s^{-\frac{\nu}{2}}\,\e^{-\zeta^2 s}\e^{-\frac{t^2}{4s}}\frac{\diff s}{s^{1+\sigma}}  \right),
		\end{align*}
		where since $|x|\leq \kappa$ we used for the first integral that 
		\[
		h_s^{\zeta}(x) \asymp s^{-\frac{n}{2}\e^{-\frac{|x|^2}{4s }}}, \qquad s\in(0,1),		\]
		while in the second integral we used that 
		\[
		\|h_s^{\zeta}\|_{\infty}\asymp s^{-\frac{\nu}{2}}\e^{-\zeta^2s}, \qquad s\geq 1.
		\]
		Both estimates used~\eqref{S2heatkernel}. Therefore, 
		\[
		Q_t^{\sigma, \zeta}(x)\lesssim t^{2\sigma} \big((t+|x|)^{-(n+2\sigma)} +1\big) \lesssim t^{2\sigma} (t+|x|)^{-(n+2\sigma)}.
		\]
		
		Assume now that $t+|x| \geq \kappa$. We separate the cases when $\zeta>0$ and $\zeta=0$.
		
		\smallskip
		
		Suppose $\zeta>0$ and fix a positive sufficiently large number $\eta>0$. Then, we split the integral in~\eqref{kernelCS} into three integrals, the first over the interval
		$(0, \eta^{-1}b)$, the second over $[\eta^{-1}b, \eta b]$ and the third over $[\eta b, +\infty)$, where $b=\sqrt{t^2+|x|^2}/(2\zeta)$. By proceeding as in the proof of~\cite[Theorem 4.3.1]{AJ99} or~\cite[Theorem 3.2]{BP2022}, one sees that the main contribution comes from the integral over $[\eta^{-1}b, \eta b]$, while the other two decay exponentially faster with respect to $t^2+|x|^2$. We omit the details.
		
		\smallskip
		
		Suppose now that $\zeta=0$. For the upper bound, using~\eqref{kernelCS} and~\eqref{S2heatkernel}, we have
		$$\frac{2^{2\sigma}\Gamma(\sigma)}{t^{2\sigma}}	Q_t^{\sigma, 0}(x)\,\varphi_{0}(x)^{-1} \asymp\int_{0}^{+\infty}s^{-\frac{n}{2}}\,
		\bigg\{
		\prod_{\alpha\in\Sigma_{r}^{+}}
		(1+s+\langle{\alpha,x^{+}}\rangle)^{\frac{m_{\alpha}+m_{2\alpha}}{2}-1}
		\bigg\}\e^{-\frac{t^2+|x|^2}{4s}}\frac{\diff s}{s^{1+\sigma}}$$
		and we further split the right hand side
		into two integrals over $(0, t^2+|x|^2]$ and $(t^2+|x|^2, +\infty)$. The claim follows in the first interval by estimating 
		\[
		\prod_{\alpha\in\Sigma_{r}^{+}}
		(1+s+\langle{\alpha,x^{+}}\rangle)^{\frac{m_{\alpha}+m_{2\alpha}}{2}-1}\lesssim (t^2+|x|^2)^{\frac{n-\ell}{2}-|\Sigma_r^{+}|}
		\]
		while in the second by \[
		\prod_{\alpha\in\Sigma_{r}^{+}}
		(1+s+\langle{\alpha,x^{+}}\rangle)^{\frac{m_{\alpha}+m_{2\alpha}}{2}-1}\lesssim s^{\frac{n-\ell}{2}-|\Sigma_r^{+}|}.
		\]
		The lower bound follows by estimating $ 1+s+\langle \alpha, x^+\rangle \geq  s$ and restricting the integral over $(t^2+|x|^2, +\infty)$. The details are left to the reader.
	\end{proof}

	\subsection{The fractional heat equation}
	We already observed that	the heat equation for $\sqrt{\Ls_{\zeta}}$, and consequently the Poisson semigroup, arise from the Caffarelli--Silvestre extension problem discussed above. However, they appear as well from the fractional heat equation, which we now illustrate.
	
	Suppose $\alpha \in (0,2)$ and let $\eta_{t}^{\alpha}$ be the inverse Laplace transform of the function $s \mapsto \exp\{-t s^{\alpha/2}\}$.
	The fractional Laplacian $\mathcal{L}_{\zeta}^{\alpha/2}$ is the infinitesimal generator of a standard isotropic $\alpha$-stable L{\'e}vy motion $X^{\alpha}_t$.
	This process is a L{\'e}vy process, which can be viewed as the long-time scaling limit of a random walk with power law jumps (\cite[Theorem 6.17]{MeSi2011}). Via subordination to the heat semigroup, we may write 
	\[
	\e^{-t\Ls_{\zeta}^{\alpha/2}}=\int_0^{+\infty}\eta_{t}^{\alpha}(s)\, \e^{-s\Ls_{\zeta}} \,\diff{s},
	\]
	see e.g.~\cite[(7), p.260]{Y}. Then, the solution to the initial value problem 
	\begin{equation*}
		\begin{cases}
			\partial_t u + \Ls_{\zeta}^{\alpha/2}u = 0, \qquad \; t>0,\\
			u(\, \cdot \,,0)=f
		\end{cases} 
	\end{equation*} 
	is given by the right convolution (for sufficiently regular $f$'s)
	\[
	u(x,t)=\e^{-t\Ls_{\zeta}^{\alpha/2}}f(x)=f\ast P_t^{\alpha, \zeta}(x)=\int_{G} P_t^{\alpha, \zeta}(y^{-1}x)f(y)\,\diff{y}, \qquad x\in G, \, t>0,
	\]
	where the kernels are given by
	\begin{equation}\label{kernelFH}
		P_t^{\alpha, \zeta}(x)=\int_0^{+\infty} h_s^{\zeta}(x)\,\eta_{t}^{\alpha}(s)\,\diff{s}, \qquad x\in G , \, t>0.
	\end{equation}
	This subordination formula implies that $P_t^{\alpha, \zeta}$, which we call \emph{fractional heat kernel}, is bi-$K$-invariant, positive and symmetric. It is worth mentioning that $(\e^{-t\Ls_{\zeta}^{\alpha/2}})_{t>0}$ is a $C_0$ semigroup  in $L^p$ for $1\leq p<\infty$, cf.~\cite{Y}. See also~\cite{G61, GS04}. 
	
	Precise estimates for $\eta_{t}^{\alpha}$ from above and below are known. In particular, see e.g.~\cite[(8)~and~(9)]{GS04},
	\begin{align} \label{etaUL}
		\eta_{t}^{\alpha}(s) \asymp
		\begin{cases}
			t^{\frac{1}{2-\alpha}}\, s^{-\frac{4-\alpha}{4-2\alpha}}\,\e^{-c_{\alpha} t^{\frac{2}{2-\alpha}} s^{-\frac{\alpha}{2-\alpha}}} &\mbox{if } s\leq t^{2/\alpha}, \\[5pt]
			t\, s^{-1-\frac{\alpha}{2}}%\,\e^{-c_{\alpha} t^{\frac{2}{2-\alpha}} u^{-\frac{\alpha}{2-\alpha}}}
			&\mbox{if } s> t^{2/\alpha},
		\end{cases} \qquad t>0,
	\end{align}
	where $c_{\alpha}=\frac{2-\alpha}{2}\left( \frac{\alpha}{2}\right)^{\frac{\alpha}{2-\alpha}}$. From these, one also gets
	\begin{equation}\label{etaU}
		\eta_{t}^{\alpha}(s) \lesssim	t\, s^{-1-\frac{\alpha}{2}}, \qquad  t, s>0.
	\end{equation}

	Our aim is to prove the following.
	\begin{proposition}\label{P kernel estimates}
		(i) Suppose $0<\zeta  \leq \|\rho\|$. Then for $\kappa >0$
		\begin{align*}
			P_t^{\alpha, \zeta}(x)\asymp_{\kappa, \zeta}	
			\begin{cases}
				t\,(t^{1/\alpha}+|x|)^{-(n+\alpha)} & \quad \text{if } t+|x| \leq \kappa,
				\\[5pt]
				t\,(t+|x|)^{-\frac{\ell}{2}-|\Sigma_r^{+}|-\frac{\alpha}{2}-\frac{1}{2}}\,\varphi_0(x)\,\e^{-\zeta|x|} &\quad  \text{if } t+|x| \geq \kappa \text{ and } |x|\geq t^{2/\alpha}.
			\end{cases}
		\end{align*}
		(ii) In the limit case $\zeta=0$, for $\kappa >0$ we have instead
		\begin{align*}
			P_t^{\alpha, 0}(x)\asymp	
			\begin{cases}
				t\,(t^{1/\alpha}+|x|)^{-(n+\alpha)} \; & \quad \text{if } t+|x|\leq \kappa,
				\\[5pt]
				t\,(t^{1/\alpha}+|x|)^{-\ell-2|\Sigma_r^{+}|-\alpha}\,\varphi_0(x) &\quad \text{if } t+|x|\geq \kappa \text{ and } |x|\geq t^{1/\alpha}.
			\end{cases}
		\end{align*}
	\end{proposition}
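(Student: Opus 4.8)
The plan is to follow the scheme of Proposition~\ref{Q kernel estimates}, now feeding the heat kernel bounds~\eqref{S2heatkernel} and the estimates~\eqref{etaUL}--\eqref{etaU} for $\eta_t^\alpha$ into the subordination formula~\eqref{kernelFH} and analysing the resulting $s$-integral region by region; throughout one uses $\sum_{\alpha\in\Sigma^+}m_\alpha=\sum_{\alpha\in\Sigma_r^+}(m_\alpha+m_{2\alpha})$, so that $\sum_{\alpha\in\Sigma_r^+}(\tfrac{m_\alpha+m_{2\alpha}}{2}-1)=\tfrac{n-\ell}{2}-|\Sigma_r^+|$. In the near regime $t+|x|\le\kappa$ the variable $|x|$ is bounded, so $\varphi_0(x)$ and the product over $\Sigma_r^+$ in~\eqref{S2heatkernel} are $\asymp 1$, whence $h_s^\zeta(x)\asymp s^{-n/2}\e^{-|x|^2/(4s)}$ for $s\in(0,1)$ and $h_s^\zeta(x)\le\|h_s^\zeta\|_\infty\lesssim s^{-\nu/2}\e^{-\zeta^2 s}$ for $s\ge 1$. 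Splitting~\eqref{kernelFH} at $s=1$, the tail over $[1,\infty)$ is $\lesssim t$ by~\eqref{etaU} and is absorbed into $t(t^{1/\alpha}+|x|)^{-(n+\alpha)}$ (which is $\gtrsim t$ here, since $t^{1/\alpha}+|x|$ is bounded), while the part over $(0,1)$ agrees, up to this lower-order term, with $\int_0^{\infty}s^{-n/2}\e^{-|x|^2/(4s)}\eta_t^\alpha(s)\,\dd s$, a fixed multiple of the Euclidean $\alpha$-stable heat kernel on $\R^n$, hence $\asymp t(t^{1/\alpha}+|x|)^{-(n+\alpha)}$ (one can also verify this two-sided bound directly from~\eqref{etaUL}). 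The lower bound is immediate for $\kappa$ small and, for general $\kappa$, follows by a routine comparison of $P_t^{\alpha,\zeta}$ with $t(t^{1/\alpha}+|x|)^{-(n+\alpha)}$ on the relatively compact set $\{\kappa_0\le t+|x|\le\kappa\}$.

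For the far regime with $\zeta>0$ (so $t+|x|\ge\kappa$ and $|x|\ge t^{2/\alpha}$), insert~\eqref{S2heatkernel} into~\eqref{kernelFH} and write $\zeta^2 s+\tfrac{|x|^2}{4s}=\zeta|x|+\tfrac{\zeta^2}{s}(s-s_*)^2$ with $s_*=|x|/(2\zeta)$. Then $\e^{-\zeta^2 s-|x|^2/(4s)}$ equals $\e^{-\zeta|x|}$ times a bump centred at $s_*\asymp_\zeta|x|\asymp t+|x|$ of width $\asymp_\zeta|x|^{1/2}$. Since $s_*\gtrsim_\zeta t^{2/\alpha}$, on that bump $\eta_t^\alpha(s)\asymp t|x|^{-1-\alpha/2}$ by~\eqref{etaUL}, the product over $\Sigma_r^+$ is $\asymp|x|^{\frac{n-\ell}{2}-|\Sigma_r^+|}$ (each factor $1+s+\langle\alpha,x^+\rangle$ being $\asymp|x|$ there), and $s^{-n/2}\asymp|x|^{-n/2}$; collecting these powers and integrating the bump gives the order $t(t+|x|)^{-\ell/2-|\Sigma_r^+|-\alpha/2-1/2}\varphi_0(x)\e^{-\zeta|x|}$. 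The contributions of $\{s\lesssim s_*\}$ (dominated by $\e^{-|x|^2/(4s)}$) and of $\{s\gtrsim s_*\}$ (dominated by $\e^{-\zeta^2 s}$) are $\e^{-\zeta|x|}$ times a gain of a negative power of $|x|$, hence negligible; this can be organised exactly as in the proof of~\cite[Theorem 4.3.1]{AJ99} or~\cite[Theorem 3.2]{BP2022}, and yields the upper and lower bounds at once.

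For the far regime with $\zeta=0$ (so $t+|x|\ge\kappa$ and $|x|\ge t^{1/\alpha}$) there is no $\e^{-\zeta^2 s}$ factor and the integral in~\eqref{kernelFH} is governed by large $s$; split it at $s=|x|^2$. Since $|x|^2\ge t^{2/\alpha}$, on $(|x|^2,\infty)$ one has $\eta_t^\alpha(s)\asymp ts^{-1-\alpha/2}$, $\e^{-|x|^2/(4s)}\asymp1$ and the product $\asymp s^{\frac{n-\ell}{2}-|\Sigma_r^+|}$ (as $s\gg\langle\alpha,x^+\rangle$), so the integrand is $\asymp\varphi_0(x)\,t\,s^{-\ell/2-|\Sigma_r^+|-1-\alpha/2}$, integrating to $\asymp\varphi_0(x)\,t\,|x|^{-\ell-2|\Sigma_r^+|-\alpha}\asymp t(t^{1/\alpha}+|x|)^{-\ell-2|\Sigma_r^+|-\alpha}\varphi_0(x)$; this already supplies the lower bound and the correct order. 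For the upper bound one checks that the part over $(0,|x|^2)$ is dominated by the same quantity, by bounding $\eta_t^\alpha(s)\lesssim ts^{-1-\alpha/2}$ via~\eqref{etaU}, estimating the product factor crudely as in Proposition~\ref{Q kernel estimates}, and computing the remaining Gaussian $s$-integral (e.g.\ after the substitution $u=|x|^2/(4s)$).

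The main obstacle is the saddle-point analysis in the case $\zeta>0$: it has to be carried out uniformly in the Weyl-chamber variable $x^+$ — several of the inner products $\langle\alpha,x^+\rangle$ may be much smaller than $|x|$, so the product factor must be pinned down uniformly over the bump $s\asymp|x|$ — and uniformly as $(t,|x|)$ ranges over the stated region, and one must verify both that the exponents assemble exactly into $-\ell/2-|\Sigma_r^+|-\alpha/2-1/2$ and that the two flanking intervals are genuinely lower order. This is precisely the mechanism behind the heat kernel estimates of~\cite[Theorem 4.3.1]{AJ99} and~\cite[Theorem 3.2]{BP2022}, here with the stable subordinator replaced by $\eta_t^\alpha$; once that template is set up, the remainder is bookkeeping with~\eqref{S2heatkernel}, \eqref{etaUL} and~\eqref{etaU}.
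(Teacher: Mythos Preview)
Your proposal is correct and follows essentially the same route as the paper: subordination via~\eqref{kernelFH}, a local Euclidean regime where $h_s^\zeta\asymp s^{-n/2}\e^{-|x|^2/(4s)}$, a Laplace/saddle-point analysis for $\zeta>0$ in the far regime, and a direct split of the $s$-integral for $\zeta=0$; both arguments ultimately defer the technical core of the $\zeta>0$ case to the mechanism of~\cite{AJ99},~\cite{GS04},~\cite{BP2022}.

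Two small differences worth noting. First, for the near-regime lower bound the paper simply invokes $P_t^{\alpha,\zeta}\ge P_t^{\alpha,\|\rho\|}$ together with~\cite[Theorem~4.3]{GS04}, whereas you identify the integral with the Euclidean $\alpha$-stable kernel and then patch the annulus $\{\kappa_0\le t+|x|\le\kappa\}$; your patching argument is fine, but note the set is not compact in $t$ (one must still check the linear-in-$t$ behaviour as $t\to0$ with $|x|\asymp1$, which is an easy extra line). Second, in the far regime with $\zeta>0$ the paper minimises the three-term phase $f(s)=\tfrac{|x|^2}{4s}+\zeta^2 s+c_\alpha t^{2/(2-\alpha)}s^{-\alpha/(2-\alpha)}$, incorporating the exponential part of $\eta_t^\alpha$ from~\eqref{etaUL}, and cites~\cite[\S4--5]{GS04}; you drop the third term and work with the two-term phase centred at $s_*=|x|/(2\zeta)$. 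This simplification is legitimate precisely because the hypothesis $|x|\ge t^{2/\alpha}$ forces $s_*\gtrsim t^{2/\alpha}$, so the saddle sits in the polynomial regime of~\eqref{etaUL} and the third term is $O(1)$ there; it is worth saying this explicitly, and for the upper bound one should also record that the interval $s<t^{2/\alpha}$ is handled via the crude bound~\eqref{etaU}.
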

	
	%%%%%%%%%%%%%%%%%%%%%%%%%%%%%%%%%%
	
	\begin{proof}
		Assume first that $t+|x| \leq \kappa$ and $\zeta \in [0,\|\rho\|]$. 
		
		For the lower bounds, it suffices to observe that for all $\zeta\in [0, \|\rho\|]$, by the subordination formula~\eqref{kernelFH} and~\eqref{zetageqrho}
		\[
		P_{t}^{\alpha, \zeta}(x) \geq P_{t}^{\alpha, \|\rho\|}(x) , \qquad  x\in G. 
		\]
		Then we can conclude using the lower bounds for $P_{t}^{\alpha, \|\rho\|}$ obtained in~\cite[Theorem 4.3]{GS04}.
		
		For the upper bound, observe that since both $t$ and $|x|$ are bounded
		\begin{equation}\label{small x heat}
			h_s^{\zeta}(x)\lesssim s^{-\frac{n}{2}}(1+s)^{\frac{n-\nu}{2}}\,\e^{-\frac{|x|^2}{4s}}, \qquad s>0.
		\end{equation}
		By the subordination formula~\eqref{kernelFH}, by~\eqref{small x heat} and~\eqref{etaU}, given $\kappa' =\kappa^{2/\alpha}$ we split
		\begin{align*}	
			P_t^{\alpha, \zeta}(x)&\lesssim\  \int_{0}^{\kappa'}\eta_t^{\alpha}(s)\,s^{-\frac{n}{2}}\,\e^{-\frac{|x|^2}{4s}}\diff s +t\int_{\kappa'}^{+\infty}\,s^{-1-\frac{\alpha}{2}}\,s^{-\frac{\nu}{2}}\diff s   =: I_1+I_2.
		\end{align*}
		As for $I_{2}$, it is easily seen that since $|x|+t\leq \kappa$
		\begin{align*}	
			I_2\lesssim\ t \lesssim t\,(t^{\frac{1}{\alpha}}+|x|)^{-(n+\alpha)}.
		\end{align*}
		To estimate $I_1$, we consider the cases when $|x|\geq t^{1/\alpha}$ and $|x|< t^{1/\alpha}$ separately.
		
		Suppose $|x|\geq t^{1/\alpha}$. Then
		\[
		|x|^2\asymp t^{\frac{2}{\alpha}}+|x|^2 \asymp (t^{\frac{1}{\alpha}}+|x|)^2,
		\]
		whence by~\eqref{small x heat} and~\eqref{etaU} we get for a suitable $c>0$
		\begin{align*}
			I_{1}&\lesssim t \int_{0}^{\kappa'}s^{-1-\frac{\alpha}{2}} \, s^{-\frac{n}{2}} \exp\bigg\{{-c\, \bigg(\frac{t^{1/\alpha}+|x|}{\sqrt{s}}\bigg)^2}\bigg\}\,\diff{s} \lesssim \frac{t}{(t^{1/\alpha}+|x|)^{n+\alpha}} .
		\end{align*}
		Suppose now that  $|x|< t^{1/\alpha}$. Then
		\begin{equation}\label{sim}
			\frac{t}{(t^{1/\alpha}+|x|)^{\alpha}}\asymp 1.
		\end{equation}
		We split $I_{1}$ in turn into the integrals over $(0, t^{2/\alpha})$ and $[t^{2/\alpha}, \kappa')$, which we denote by $I_{1,1}$ and $I_{1,2}$ respectively. By~\eqref{small x heat} and~\eqref{etaUL} we then have
		\begin{align*}
			I_{1,1}&\lesssim	t^{\frac{1}{2-\alpha}}	\int_{0}^{
				t^{2/\alpha}} s^{-\frac{4-\alpha}{4-2\alpha}}\,\e^{-c_{\alpha}\left(\frac{t^{2/\alpha}}{s}\right)^{\frac{\alpha}{2-\alpha}}}\;s^{-\frac{n}{2}}\;\e^{-\frac{|x|^2}{4s}}\;\diff{s} \\
			%	&\lesssim	\int_{1}^{\infty} u^{\frac{3\alpha-4}{2(2-\alpha)}}\,\e^{-c_{\alpha} u^{\frac{\alpha}{2-\alpha}}}\;\left( \frac{t^{1/\alpha}}{\sqrt{u}}\right)^{-n}\;\e^{-\,\frac{|x|^2}{4t^{2/\alpha}}u}\;\diff{u}\\
			&	\lesssim t^{-n/\alpha} \int_{1}^{\infty} s^{\frac{3\alpha-4}{2(2-\alpha)}}\,\e^{-c_{\alpha} s^{\frac{\alpha}{2-\alpha}}}\;s^{\frac{n}{2}}\;\diff{s}  \\
			&\lesssim \frac{t}{( t^{1/\alpha}+|x|)^{n+\alpha}},
		\end{align*}
		after a change of variables and a use of~\eqref{sim}. Finally, we turn to $I_{1,2}$. Again by~\eqref{small x heat} and~\eqref{etaUL}
		\begin{align*}
			I_{1,2}&\lesssim	
			t	\int_{t^{2/\alpha}}^{\infty} s^{-1-\frac{\alpha}{2}}  \;s^{-\frac{n}{2}}\;\e^{-\frac{|x|^2}{4s}}\;\diff{s} \\
			&\lesssim	t^{-n/\alpha} \int_{0}^{1} s^{-1+\frac{\alpha}{2}+\frac{n}{2}} \;\diff{s} \lesssim t^{-n/\alpha}\\
			&\lesssim \frac{ t }{( t^{1/\alpha}+|x|)^{n+\alpha}}, 
		\end{align*}
		using~\eqref{sim} once more. The proof of the estimate when $t+|x| \leq \kappa$ is then complete.
		
		\smallskip
		
		Assume now that $t+|x| \geq \kappa$ and $|x|\geq t^{2/\alpha}$. We split the cases when $\zeta>0$ and $\zeta=0$.
		
		\smallskip
		
		Suppose $\zeta\in (0,\|\rho\|]$. Using the subordination formula~\eqref{kernelFH} and~\eqref{etaUL} we get 
		\begin{align*}
			P_t^{\alpha, \zeta}&\asymp t^{\frac{1}{2-\alpha}}\,
			\int_{0}^{t^{2/\alpha}} h_s^{\zeta}\,s^{-\frac{4-\alpha}{4-2\alpha}}\,\e^{-c_{\alpha} t^{\frac{2}{2-\alpha}} s^{-\frac{\alpha}{2-\alpha}}}\diff s  + t \int_{t^{2/\alpha}}^{+\infty} h_s^{\zeta}\,
			s^{-1-\frac{\alpha}{2}}\,\e^{-c_{\alpha} t^{\frac{2}{2-\alpha}} s^{-\frac{\alpha}{2-\alpha}}}\diff s.
		\end{align*}
		As in?\cite[p.\ 1075]{AJ99} or~\cite[p.\ 90]{GS04} one can show that the main contribution above comes from the exponential terms and their integration over the interval in which $s\asymp s_0(x, t)$, where $s_0$ minimizes the function
		$$f(s)=\frac{|x|^2}{4s}+\zeta^2s +c_{\alpha} t^{\frac{2}{2-\alpha}} s^{-\frac{\alpha}{2-\alpha}}.$$
		As $\zeta>0$ the result follows from a straightforward adaptation of the arguments in~\cite[\S 4--5]{GS04}, see in particular~\cite[Corollary 5.6]{GS04}. We omit the details.
		
		\smallskip
		
		Suppose now that $\zeta=0$. For the upper bound, we observe first that $|x|\gtrsim 1$, and then we use the subordination formula~\eqref{kernelFH} with the heat kernel estimates~\eqref{S2heatkernel} and the upper bounds~\eqref{etaU} of  $\eta_{t}^{\alpha}$ to get
		$$P_t^{\alpha, 0}(x)\,\varphi_{0}(x)^{-1}\lesssim\int_{0}^{+\infty}s^{-\frac{n}{2}-1-\frac{\alpha}{2}}\,
		\bigg\{
		\prod_{\alpha\in\Sigma_{r}^{+}}
		(1+s+\langle{\alpha,x^{+}}\rangle)^{\frac{m_{\alpha}+m_{2\alpha}}{2}-1}
		\bigg\}\e^{-\frac{|x|^2}{4s}} \diff s.$$
		Next, we split the right hand side
		into two integrals over $(0, (t^{1/\alpha}+|x|)^2]$ and $((t^{1/\alpha}+|x|)^2, +\infty)$. On the first interval we estimate $1+s+\langle{\alpha,x^{+}}\rangle \lesssim (t^{1/\alpha}+|x|)^2$ while on the second we estimate $1+s+\langle{\alpha,x^{+}}\rangle \lesssim s$.
		To conclude we proceed as in the proof of Proposition~\ref{Q kernel estimates}, taking into account that $|x|\asymp t^{1/\alpha}+|x|$.
		For the lower bound, it suffices to observe that by~\eqref{kernelFH} and~\eqref{etaUL} we have
		\[
		P_t^{\alpha,0}(x)\geq \int_{t^{2/\alpha}}^{+\infty}h_s^{0}(x)\,\eta_{t}^{\alpha}(s)\,\diff{s} \geq t \, \int_{(t^{1/\alpha}+|x|)^2}^{+\infty}s^{-1-\frac{\alpha}{2}}  h_s^{0}(x)\,\diff{s}.
		\]
		The claim follows by a similar argument to that of Proposition~\ref{Q kernel estimates} for $\zeta=0$. We omit the details.
	\end{proof}

	\section{Hardy--Littlewood maximal functions}  \label{Sec:maximal}
	Define, for $R>0$, the local maximal operator
	\[
	\mathcal{M}_{R}f = \sup_{0<r<R} \mathcal{A}_{r} f, \qquad \mathcal{A}_{r} f(xK)= \frac{1}{\mu(B(xK,r))} \int_{B(xK,r)} |f|\, \dd \mu, \qquad x\in G.
	\]
	Observe that by left invariance and~\eqref{volgrowth}, $\mu(B(xK,r)) \approx_{R} r^{n}$ for all $0<r<R$. We recall that for all $R>0$ the maximal operator $\mathcal{M}_{R}$ is bounded from $L^{1}$ to $L^{1,\infty}$ (i.e.\ it is of weak type $(1,1)$) and on $L^{p}$, $1<p\leq \infty$; see e.g.~\cite{S81}. By $L^{p,\infty}$ for $1\leq p<\infty$ we shall classically denote the space of (equivalence classes of) measurable functions such that
	\[
	\|f\|_{p,\infty} := \sup_{s>0} \, s\,  \mu(\{xK\in \mathbb{X} \colon |f(xK)|>s \}^{1/p}
	\]
	is finite. If one replaces $\mu$ by the weighted measure $u \, \dd \mu$ for a weight $u$, we shall write $L^{p,\infty}(u)$. In this section we prove the following.
	
	\begin{lemma}\label{lemmamaximal}
		Let $v$ be a weight, and suppose that $1\leq p<\infty$ and $R>0$. The following statements are equivalent.
		\begin{itemize}
			\item[(i)] There exists a weight $u$ such that  $\mathcal{M}_{R}$ is bounded from $L^{p}(v)$ to $L^{p}(u)$, if $p>1$, or from $L^{1}(v)$ to $L^{1,\infty}(u)$, if $p=1$.
			\item[(ii)] There exists a weight $u$ such that $\mathcal{M}_{R}$  is bounded from $L^{p}(v)$ to $L^{p,\infty}(u)$.
			\item[(iii)] There exists a weight $u$ such that $\mathcal{A}_{R} $ is bounded from $L^{p}(v)$ to $L^{p,\infty}(u)$.
			\item[(iv)] $v\in \mathcal{D}_{p}^{\mathrm{loc}}$.
		\end{itemize}
	\end{lemma}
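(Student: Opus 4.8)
The plan is to prove the four conditions equivalent through the cycle $(i)\Rightarrow(ii)\Rightarrow(iii)\Rightarrow(iv)\Rightarrow(i)$. The first two implications are immediate: $(i)\Rightarrow(ii)$ because $\|g\|_{L^{p,\infty}(u)}\le\|g\|_{L^{p}(u)}$ by Chebyshev (and for $p=1$ the two statements coincide), and $(ii)\Rightarrow(iii)$ because $\mathcal{A}_{R}f\le\mathcal{M}_{R}f$ pointwise --- indeed $\mathcal{A}_{R}f(xK)=\lim_{r\uparrow R}\mathcal{A}_{r}f(xK)\le\sup_{0<r<R}\mathcal{A}_{r}f(xK)$, the geodesic sphere $\{d(\,\cdot\,,xK)=R\}$ being $\mu$-null on the Hadamard manifold $\mathbb{X}$. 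So the work is in $(iii)\Rightarrow(iv)$ and $(iv)\Rightarrow(i)$.

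For $(iii)\Rightarrow(iv)$ I would test $\mathcal{A}_{R}$ against functions supported in a fixed small ball. Fix $B=B(yK,R/2)$; then $B\subseteq B(xK,R)$ for every $xK\in B$, and $\mu(B(xK,R))\approx_{R}\mu(B)$ by~\eqref{volgrowth}, so $\mathcal{A}_{R}f(xK)\gtrsim_{R}\mu(B)^{-1}\int_{B}|f|\,\dd\mu$ for all $xK\in B$. Inserting this into the weak-type bound of $(iii)$ at the level $\lambda\approx\mu(B)^{-1}\int_{B}|f|\,\dd\mu$, and using $0<u(B)<\infty$ (valid since $u$ is a weight), one gets $\mu(B)^{-1}u(B)^{1/p}\int_{B}|f|\,\dd\mu\lesssim\|f\|_{L^{p}(v)}$ for all $f$ supported in $B$. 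For $p>1$ take $f=(v^{1-p'}\wedge k)\chi_{B}$: since $(v^{1-p'}\wedge k)^{p}v\le v^{1-p'}\wedge k$, this yields a bound on $\big(\int_{B}(v^{1-p'}\wedge k)\,\dd\mu\big)^{1/p'}$ uniform in $k$, so $v^{1-p'}\in L^{1}(B)$ by monotone convergence; letting $B$ vary gives $v^{1-p'}\in L^{1}_{\mathrm{loc}}$, i.e.\ $v\in\mathcal{D}_{p}^{\mathrm{loc}}$. For $p=1$, if $v\notin\mathcal{D}_{1}^{\mathrm{loc}}$ there is a ball $B$ of radius $\le R/2$ and sets $E_{j}\subseteq B$ with $\mu(E_{j})>0$ and $v<1/j$ on $E_{j}$; the functions $f_{j}=\mu(E_{j})^{-1}\chi_{E_{j}}$ satisfy $\|f_{j}\|_{L^{1}(v)}\le 1/j\to 0$ while $\mathcal{A}_{R}f_{j}\gtrsim_{R}\mu(B)^{-1}$ on $B$, forcing $u(B)=0$, a contradiction.

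For $(iv)\Rightarrow(i)$ the case $p=1$ is explicit: put $u(xK)=\operatorname{ess\,inf}_{B(xK,6R)}v$. Since $v\in\mathcal{D}_{1}^{\mathrm{loc}}$ this is a genuine weight (positive and locally bounded), and whenever $B$ has radius $<R$ and $xK\in 5B$ then $B\subseteq B(xK,6R)$, so $u(xK)\le\operatorname{ess\,inf}_{B}v$; together with the $5r$-covering lemma applied to the balls that realize $\{\mathcal{M}_{R}f>\lambda\}$ (admissible as their radii are $<R$) and~\eqref{volgrowth}, this gives $u(\{\mathcal{M}_{R}f>\lambda\})\lesssim_{R}\lambda^{-1}\|f\|_{L^{1}(v)}$ at once. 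For $p>1$ the analogous choice $u=\big(\mathcal{M}_{6R}(v^{1-p'})\big)^{1-p}$ --- still a weight, since $\mathcal{M}_{6R}(v^{1-p'})$ is finite $\mu$-a.e.\ and locally bounded below --- gives by the same covering argument only the \emph{weak} type bound $\mathcal{M}_{R}\colon L^{p}(v)\to L^{p,\infty}(u)$, and this is where the real difficulty lies: for the \emph{strong} type this $u$ is too large (its version of Sawyer's testing condition fails, the maximal function of an $L^{1}$ function being in general not locally integrable). To get an admissible $u$ I would localize: using the homogeneity of $\mathbb{X}=G/K$ and~\eqref{volgrowth}, cover $\mathbb{X}$ by balls $B(x_{i}K,R)$ with $\{B(x_{i}K,R/5)\}_{i}$ still a cover and $\{B(x_{i}K,2R)\}_{i}$ of bounded overlap. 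Each $B(x_{i}K,2R)$ is isometric to the fixed ball $B(o,2R)$, which is bi-Lipschitz to a Euclidean ball with measure comparable to Lebesgue measure, \emph{with constants independent of $i$}; hence on $B(x_{i}K,2R)$ the local maximal operator is pointwise comparable to the Euclidean local Hardy--Littlewood maximal operator. On each piece the Euclidean counterpart of $(iv)\Rightarrow(i)$, by now standard (see~\cite{HTV} and references therein), furnishes a target weight after normalizing the operator norm to $1$; pasting these over a measurable partition subordinate to $\{B(x_{i}K,R/5)\}_{i}$ produces a global weight $u$, and the bounded overlap of $\{B(x_{i}K,2R)\}_{i}$ turns the sum of the local estimates into $\|\mathcal{M}_{R}f\|_{L^{p}(u)}\lesssim\|f\|_{L^{p}(v)}$. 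Apart from the routine covering and testing, this transfer to the Euclidean setting --- or, equivalently, a direct reproduction on $\mathbb{X}$ of the delicate weight construction behind it --- is the main obstacle.
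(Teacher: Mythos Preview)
Your cycle $(i)\Rightarrow(ii)\Rightarrow(iii)\Rightarrow(iv)$ is correct and standard; the testing argument for $(iii)\Rightarrow(iv)$ and the explicit Fefferman--Stein weight $u=\operatorname{ess\,inf}_{B(\cdot,6R)}v$ for $(iv)\Rightarrow(i)$ when $p=1$ are fine and in fact more detailed than what the paper writes (the paper simply refers to~\cite{HTV,GHSTV} for all of this).

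The genuine divergence is in $(iv)\Rightarrow(i)$ for $p>1$. The paper's route is to first establish, as an independent result (Proposition~\ref{vvw11}), the $\ell^q$-valued weak type $(1,1)$ and $L^p$ inequalities for $\mathcal{M}_R$ on $\mathbb{X}$; with that in hand, the Rubio de Francia / Fefferman--Stein construction of the target weight in~\cite{HTV} goes through verbatim on $\mathbb{X}$. Your approach avoids the global vector-valued inequality: you localize the problem itself to balls $B(x_iK,2R)$, transfer each to a Euclidean ball via bi-Lipschitz equivalence, invoke the Euclidean $(iv)\Rightarrow(i)$ from~\cite{HTV} as a black box to produce a normalized local weight $u_i$, and patch. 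Both strategies rest on the same bounded-overlap decomposition of $\mathbb{X}$; the difference is whether one localizes inside the vector-valued inequality (the paper) or inside the weight construction (you). What the paper's route buys is a reusable tool---Proposition~\ref{vvw11} is singled out as of independent interest and is reused for the distinguished Laplacian in Section~\ref{sec:distinguished}---whereas your route is a one-shot argument. What your route buys is that for $p=1$ it is completely self-contained, and for $p>1$ it makes transparent that nothing beyond the Euclidean theory and bounded overlap is required. One point to tighten: the result you import from~\cite{HTV} is stated for the global Hardy--Littlewood maximal function on $\mathbb{R}^n$, so you should say a word about why it localizes to a ball (e.g.\ extend $v$ by $+\infty$ off the ball, or note that the construction in~\cite{HTV} only uses the doubling property, hence works on $(B(o,2R),d,\mu)$ directly, as the paper does when citing~\cite{GLY}).
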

	
	The proof is a straightforward adaption of the proof of~\cite[Lemma 3.4]{HTV}, see also the one of~\cite[Theorem 2.1]{GHSTV}, provided an $\ell^{q}$-valued weak-type $(1,1)$ inequality for $\mathcal{M}_{R}$ is at disposal. Despite $\mathcal{M}_{R}$ being a \emph{local} maximal operator, it seems not possible to follow verbatim the classical proof on $\R^{n}$, see e.g.~\cite[Ch.\ II]{St}, as this makes use of the Calder\'on--Zygmund decomposition of a function, which on the nondoubling space $\mathbb{X}$ is not available. Inspired by~\cite{AL}, we shall however reduce matters to a doubling space -- where the result is known by~\cite{GLY}.
	
	Given a sequence of measurable functions $f=(f_{j})$ on $\mathbb{X}$, define  $\mathcal{M}_{R} f = (\mathcal{M}_{R} f_{j})$; if
	\begin{equation}\label{vvnorms}
		|f|_{q} =\Big( \sum_{j}| f_{j}|^{q}\Big)^{1/q}, \qquad |\mathcal{M}_{R} f|_{q} =  \Big( \sum_{j}| \mathcal{M}_{R} f_{j}|^{q}\Big)^{1/q},
	\end{equation}
	then after few preliminaries we shall prove the following.
	\begin{proposition}\label{vvw11}
		Suppose $R>0$ and $q\in (1,\infty)$. Let $f=(f_{j})$ be a sequence of measurable functions. Then the following holds.
		\begin{enumerate}
			\item If $|f|_{q} \in L^{1}$, then $ |\mathcal{M}_{R} f|_{q} \in L^{1,\infty}$ and 
			\[
			\| |\mathcal{M}_{R} f|_{q} \|_{1,\infty} \lesssim_{R,q} \| |f|_{q}\|_{1}.
			\]
			\item If $p\in (1,\infty)$ and $|f|_{q}\in  L^{p}$, then $|\mathcal{M}_{R} f|_{q} \in L^{p} $ and
			\[
			\| |\mathcal{M}_{R} f|_{q} \|_{p} \lesssim_{R,q} \| |f|_{q}\|_{p}.
			\]
		\end{enumerate}
	\end{proposition}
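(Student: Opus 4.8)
The plan is to transfer the problem to a genuinely doubling setting, where the $\ell^{q}$-valued bounds for the Hardy--Littlewood maximal operator are available from~\cite{GLY}, and then to patch the resulting local estimates together, in the spirit of~\cite{AL}. As preliminaries I would fix a maximal $R$-separated set $\{x_{i}\}_{i}$ in $\mathbb{X}$ (that is, $d(x_{i},x_{j})>R$ for $i\neq j$, maximal with this property), so that the balls $B(x_{i},R)$ cover $\mathbb{X}$ while the balls $B(x_{i},R/2)$ are pairwise disjoint. Since by~\eqref{volgrowth} the measure $\mu$ is doubling at all scales bounded by any fixed number, a routine packing argument then gives two facts with constants depending only on $R$ and $\mathbb{X}$: for every fixed $c>0$ the balls $\{B(x_{i},cR)\}_{i}$ have overlap bounded by some $N=N(c,R,\mathbb{X})$; and each $B(x_{i},cR)$, with the restricted distance and measure, is a space of homogeneous type whose doubling constant is independent of $i$. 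For the latter I would use that $\mathbb{X}$ is geodesic: from any ball centred at a point of $B(x_{i},cR)$ a fixed proportion of its mass can be slid towards $x_{i}$ so as to fit inside $B(x_{i},cR)$, which turns the ambient local doubling into an intrinsic one.

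The crux is the following localization of $\mathcal{M}_{R}$. If $xK\in B(x_{i},R)$ and $0<r<R$, then $B(xK,r)\subseteq B(x_{i},2R)$, hence $\mathcal{A}_{r}f_{j}(xK)=\mathcal{A}_{r}(f_{j}\chi_{B(x_{i},2R)})(xK)$ for each $j$; consequently $|\mathcal{M}_{R}f|_{q}=|\mathcal{M}_{R}(f\chi_{B(x_{i},2R)})|_{q}$ on $B(x_{i},R)$, where $f\chi_{B(x_{i},2R)}=(f_{j}\chi_{B(x_{i},2R)})_{j}$. Moreover $\mathcal{M}_{R}(f\chi_{B(x_{i},2R)})$ vanishes outside $B(x_{i},3R)$, and on $B(x_{i},3R)$ it is dominated by $M_{i}(f\chi_{B(x_{i},2R)})$, where $M_{i}$ is the Hardy--Littlewood maximal operator of the homogeneous space $(B(x_{i},4R),d,\mu)$, because for those points and radii $r<R$ the ball $B(xK,r)$ lies in $B(x_{i},4R)$. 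Applying the $\ell^{q}$-valued inequalities of~\cite{GLY} on each of these spaces -- whose constants depend only on the uniform doubling constant and on $q$ -- I get, for $p\in(1,\infty)$,
\[
\int_{B(x_{i},R)}|\mathcal{M}_{R}f|_{q}^{p}\,\dd\mu\;\le\;\int_{B(x_{i},4R)}|M_{i}(f\chi_{B(x_{i},2R)})|_{q}^{p}\,\dd\mu\;\lesssim_{R,q}\;\int_{B(x_{i},2R)}|f|_{q}^{p}\,\dd\mu,
\]
and likewise $\mu\bigl(B(x_{i},R)\cap\{|\mathcal{M}_{R}f|_{q}>\lambda\}\bigr)\lesssim_{R,q}\lambda^{-1}\int_{B(x_{i},2R)}|f|_{q}\,\dd\mu$ for all $\lambda>0$.

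It then remains to sum over $i$. As the $B(x_{i},R)$ cover $\mathbb{X}$, summing the left-hand sides controls $\| |\mathcal{M}_{R}f|_{q} \|_{p}^{p}$, resp.\ $\mu(\{|\mathcal{M}_{R}f|_{q}>\lambda\})$; as the $B(x_{i},2R)$ have overlap at most $N=N(2,R,\mathbb{X})$, summing the right-hand sides produces $N\| |f|_{q} \|_{p}^{p}$, resp.\ $N\lambda^{-1}\| |f|_{q} \|_{1}$, whence both parts of the proposition. I expect the main obstacle to lie in the preliminary step, not in any single estimate: it is exactly the failure of global doubling for $(\mathbb{X},d,\mu)$ that rules out the classical route through a global Calder\'on--Zygmund decomposition and the Fefferman--Stein iteration, and the technical heart of the argument is the verification that the fixed-radius balls $B(x_{i},cR)$ are intrinsically doubling with a constant independent of $i$ -- this is where the local volume estimate~\eqref{volgrowth} and the geodesic structure of $\mathbb{X}$ are used.
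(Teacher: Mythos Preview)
Your argument is correct and follows the same overall scheme as the paper's proof: localize to balls of fixed radius, invoke the $\ell^{q}$-valued inequalities of~\cite{GLY} on each piece (which is a space of homogeneous type), and reassemble using a bounded-overlap cover. The difference is in how the localization is carried out. The paper uses the partition of~\cite[Lemma~5]{AL} and, crucially, the left-invariance~\eqref{MRTI} of $\mathcal{M}_{R}$ to translate every piece back to the \emph{single} reference ball $B(o,4+R)$; thus~\cite{GLY} is applied once, on one fixed homogeneous space, and no uniformity question arises. You instead keep the balls $B(x_{i},4R)$ where they are and apply~\cite{GLY} on each of them, which forces you to check that their intrinsic doubling constants are uniform in $i$. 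Your route works, but the step you flag as the ``technical heart'' is in fact trivial on $\mathbb{X}$: since both the metric and the measure are $G$-invariant, left translation by any $g_{i}\in G$ with $g_{i}\cdot o=x_{i}$ is a measure-preserving isometry carrying $B(o,4R)$ onto $B(x_{i},4R)$, so all these spaces are isometric copies of one another and share the same doubling constant (this is exactly what the paper exploits, and is also behind the estimate $\mu(B(o,\varrho)\cap B(xK,s))\asymp_{\varrho}s^{n}$ cited from~\cite{AL}). The geodesic ``sliding'' argument you sketch is therefore unnecessary here, though it would be the right tool on a general locally doubling space without a transitive isometry group. In short: your proof is correct and slightly more portable; the paper's is shorter because it leans on the group structure.
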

	
	\subsection{Vector-valued weak-type $(1,1)$ for $\mathcal{M}_{R}$.}
	For $w\in G$, define the left shift operator $\tau_{w}$ by
	\[
	(\tau_{w} f)(xK) = f(w^{-1}xK), \qquad x\in G,
	\]
	for all measurable functions $f$. Then $\mathcal{M}_{R}$ is left invariant, namely for all $w\in G$
	\begin{equation}\label{MRTI}
		\mathcal{M}_{R}(\tau_{w}f) = \tau_{w}(\mathcal{M}_{R} f).
	\end{equation}
	Observe moreover that if $f$ is supported in a set $E \subseteq \bbX$, then $\mathcal{M}_{R} f$ is supported in $E_{R} =\{ xK\in \mathbb{X} \colon d(xK,E)< R\}$. These properties actually depend on $\mathcal{A}_{r}$ being a convolution operator.
	
	Suppose now that $\varrho>0$ and consider the space of homogeneous type $(B(o,\varrho),d,\mu)$. Since $\mu(B(o,\varrho)\cap B(xK,s)) \approx_{\varrho}s^n $ for all $xK\in B(o,\varrho)$ and $0<s<\varrho$ (see e.g.~\cite[p.~1317]{AL}), the local Hardy--Littlewood maximal function on $(B(o,\varrho),d,\mu)$ is comparable to $\mathcal{M}_{R}$. Therefore, by~\cite[Theorem~1.2]{GLY} for $q\in (1,\infty)$ we have the vector-valued weak-type $(1,1)$ inequality
	\begin{equation}\label{localVV}
		s\, \mu \bigg( \bigg\{ xK\in B(o,\varrho)\colon \bigg( \sum_{j}|\mathcal{M}_{R}g_{j}(x)|^{q}\bigg)^{1/q}> s \bigg\} \bigg) \lesssim_{\varrho} \bigg\| \bigg( \sum_{j}| g_{j}|^{q}\bigg)^{1/q} \bigg\|_{1}
	\end{equation}
	for all $s>0$ and sequence of functions $(g_{j})$ such that $\supp g_{j} \subseteq B(o,\varrho)$ for all $j$'s. A similar vector-valued $L^{p}$ inequality holds for $p\in (1,\infty)$.  
	
	We also recall that by~\cite[Lemma 5]{AL}, there exists a countable partition $\mathbb{X} = \bigcup _{k}A_{k}$ of Borel subsets and points $(x_{k})$ of $G$ such that $B(x_{k}K,1) \subseteq  A_{k} \subseteq B(x_{k}K,2)$ for all $k$, and such that for all $\nu > 0$ there exists $N=N(\nu)$ such that each ball $B(x_{k}K, \nu)$ intersects at most $N$ other balls $B(x_{h}K, \nu)$. This latter property will be referred to as the \emph{bounded overlap property}.
	
	Therefore, we can decompose each measurable function $f$ as
	\begin{equation}\label{Ankerdec}
		f = \sum_{k}f \mathbf{1}_{A_{k}} = \sum_{k} f^{k}, \qquad f^{k}= f \mathbf{1}_{A_{k}}.
	\end{equation}
	Observe that $f^{k}$ is supported in $A_{k} \subseteq x_{k}B(o,2)$, whence $\tau_{x_{k}^{-1}}f^{h}$ is supported in $x_{k}^{-1}x_{h}B(o,2)$ for $h,k\in \N$. In particular, $\tau_{x_{k}^{-1}}f^{k}$ is supported in $B(o,2)$.
	
	We are now ready to prove Proposition~\ref{vvw11}.
	
	\begin{proof}[Proof of Proposition~\ref{vvw11}]
		We shall prove only the statement (1), being (2) analogous and actually easier (notice however that (2) cannot be obtained by interpolation from (1): in the vector valued setting, the $L^{\infty}$ boundedness of $\mathcal{M}_{R}$ fails (see~\cite{St})).
		
		We consider the quantity
		\begin{align*}
			\mu \bigg( \bigg\{ xK \in \mathbb{X}\colon \bigg( \sum_{j}|\mathcal{M}_{R}f_{j}(x)|^{q}\bigg)^{1/q}>s \bigg\} \bigg) = \sum_{k}\mu(E_{k}),
		\end{align*}
		where
		\[
		E_{k} =  \bigg\{  xK\in A_{k}\colon \bigg( \sum_{j}|\mathcal{M}_{R}f_{j}(x)|^{q}\bigg)^{1/q}>s \bigg\}.
		\]
		For $xK\in A_{k}$ write $x= x_{k}w$ with $wK\in x_{k}^{-1}A_{k}$. Then by~\eqref{MRTI}
		\[
		\mathcal{M}_{R}f_{j}(x) = \mathcal{M}_{R}f_{j}(x_{k}w) = \tau_{x_{k}^{-1}}(\mathcal{M}_{R}f_{j})(w) = (\mathcal{M}_{R}(\tau_{x_{k}^{-1}}f_{j}))(w),
		\]
		whence
		\begin{align*}
			E_{k} 
			& =x_{k}  \cdot \bigg\{ wK\in x_{k}^{-1}A_{k} \colon \bigg( \sum_{j}| (\mathcal{M}_{R}(\tau_{x_{k}^{-1}}f_{j}))(w)|^{q}\bigg)^{1/q}>s \bigg\} =: x_{k} F_{k},
		\end{align*}
		and by left invariance of the measure $\mu(E_{k}) = \mu(F_{k})$. We now decompose each $f_{j}$ as $f_{j} = \sum_{h} f_{j}^{h}$, recall~\eqref{Ankerdec}. The sublinearity of $\mathcal{M}_{R}$ and the triangle inequality in $\ell^q$ imply
		\begin{align*}
			\mu(E_{k})  
			&\leq  \mu \bigg( \bigg\{ wK\in x_{k}^{-1}A_{k} \colon \bigg( \sum_{j} \bigg| \sum_{h}(\mathcal{M}_{R}(\tau_{x_{k}^{-1}}f_{j}^{h}))(w)\bigg|^{q}\bigg)^{1/q}>s \bigg\}\bigg)\\
			& \leq \mu \bigg( \bigg\{ wK\in x_{k}^{-1}A_{k} \colon \sum_{h} \bigg( \sum_{j}|(\mathcal{M}_{R}(\tau_{x_{k}^{-1}}f_{j}^{h}))(w)|^{q}\bigg)^{1/q}>s \bigg\}\bigg).
		\end{align*}
		Observe now that since $\tau_{x_{k}^{-1}}f_{j}^{h}$ is supported in $x_{k}^{-1}x_{h}B(o,2)$, each function $(\mathcal{M}_{R}(\tau_{x_{k}^{-1}}f_{j}^{h}))$ is supported in
		\[
		(x_{k}^{-1}x_{h}B(o,2))_{R} = x_{k}^{-1}x_{h} (B(o,2))_{R} = x_{k}^{-1}x_{h} B(o,2+R),
		\]
		whence $\mu(E_{k})$ is bounded by
		\[
		\mu \bigg( \bigg\{ wK\in x_{k}^{-1}\Big(A_{k} \cap \bigcup_{h} x_{h} B(o,2+R) \Big)\colon \sum_{h}\! \bigg( \sum_{j}|(\mathcal{M}_{R}(\tau_{x_{k}^{-1}}f_{j}^{h}))(w)|^{q}\bigg)^{1/q}>s \bigg\}\bigg).
		\]
		Observe now that since $A_{k} \subseteq x_{k}B(o,2) \subseteq x_{k}B(o,2+R)$, if $A_{k} \cap \bigcup_{h} x_{h} B(o,2+R)$ is not empty then also $x_{k}B(o,2+R) \cap \bigcup_{h} x_{h} B(o,2+R)$ is not empty. By the bounded overlap property, for each $k$ there are at most $N$ indices $h$ for which the intersection $x_{k}B(o,2+R) \cap x_{h} B(o,2+R)$ is not empty. Therefore, there are $m=m(k)$ elements $x_{h_{1}^{k}}, \dots , x_{h_{m}^{k}}$, with $1\leq m(k) \leq N$ for all $k$, such that $x_{k}B(o,2) \cap x_{h_{j}^{k}} B(o,2+R)$ is not empty. Therefore $\mu(E_{k})$ is bounded by
		\begin{align*}
			&\mu \bigg( \bigg\{ wK\in\bigcup_{a=1}^{m}x_{k}^{-1} \big(A_{k} \cap x_{h_{a}^{k}} B(o,2+R)\big) \colon \sum_{\ell=1}^{m} \bigg(\sum_{j}|(\mathcal{M}_{R}(\tau_{x_{k}^{-1}}f_{j}^{h_{\ell}^{k}}))(w)|^{q}\bigg)^{1/q}> s \bigg\}\bigg)\\
			& \leq  \sum_{a=1}^{m} \mu \bigg( \bigg\{ wK\in x_{k}^{-1}\big( A_{k} \cap x_{h_{a}^{k}} B(o,2+R)\big) \colon \sum_{\ell=1}^{m} \bigg( \sum_{j}|(\mathcal{M}_{R}(\tau_{x_{k}^{-1}}f_{j}^{h_{\ell}^{k}}))(w)|^{q}\bigg)^{1/q}\!\!\!> s \bigg\}\bigg)\\
			&\lesssim_{N}  \sum_{a,\ell=1}^{m}\mu \bigg( \bigg\{ wK\in x_{k}^{-1}\big(A_{k} \cap x_{h_{a}^{k}} B(o,2+R)\big)  \colon \bigg( \sum_{j}|(\mathcal{M}_{R}(\tau_{x_{k}^{-1}}f_{j}^{h_{\ell}^{k}}))(w)|^{q}\bigg)^{1/q}\!\!>\frac{s}{N} \bigg\}\bigg).
		\end{align*}
		Observe now that  $x_{k}^{-1}A_{k} \subseteq B(o,2)$, while for $a=1, \dots, m$ the balls $x_{k}^{-1}x_{h_{a}^{k}} B(o,2+R)$ are all balls of radius $2+R$ intersecting $B(o,2)$; therefore 
		\[
		x_{k}^{-1}A_{k} \cap x_{k}^{-1}x_{h_{a}^{k}} B(o,2+R) \subseteq x_{k}^{-1}A_{k} \cup x_{k}^{-1}x_{h_{a}^{k}} B(o,2+R)  \subseteq B(o,4+R).
		\]
		On the other hand, $\tau_{x_{k}^{-1}}f_{j}^{h_{\ell}^{k}}$ is supported in $x_{k}^{-1}x_{h_{\ell}^{k}} B(o,2+R)$, whence in $B(o,4+R)$. By~\eqref{localVV} applied to $\varrho=4+R$, left invariance and the bounded overlap property,
		\begin{align}\label{VV11ineq}
			s \sum_{k}\mu(E_{k})   
			% &\lesssim C(N,R) \sum_{k}\sum_{\ell=1}^{N} \bigg\| \bigg( \sum_{j}| \tau_{x_{k}^{-1}}f_{j}^{h_{\ell}^{k}}|^{q}\bigg)^{1/q} \bigg\|_{1}\\
			& \lesssim \sum_{k} \sum_{\ell=1}^{m} \int_{\bbX}  \bigg( \sum_{j}|  \tau_{x_{k}^{-1}} f_{j}^{h_{\ell}^{k}}|^{q}\bigg)^{1/q} \, \dd \mu\\
			& = \sum_{k} \sum_{\ell=1}^{m} \int_{\bbX}  \bigg( \sum_{j}| f_{j} \mathbf{1}_{A_{h_{\ell}^{k}}}|^{q}\bigg)^{1/q} \, \dd \mu \nonumber \\
			&  \lesssim \sum_{k}\int_{x_k B(o,4+R)} \bigg( \sum_{j}| f_{j}|^{q}\bigg)^{1/q} \, \dd \mu \nonumber \\
			&  \lesssim  \int_{\bbX} \bigg( \sum_{j}| f_{j} |^{q}\bigg)^{1/q} \, \dd \mu \nonumber,
		\end{align}
		which concludes the proof.
		\end{proof}

	\section{A suitable class of kernels}\label{sec:class}
	
	\begin{definition}
		Suppose $\gamma>0$. We say that a family of measurable functions $(\psi_t)_{t>0}$ on $\mathbb{X}$ \textit{belongs to the class $\mathcal{P}_{\gamma}$}, and write $(\psi_t) \in \mathcal{P}_{\gamma}$,  if 
		\begin{itemize}
			\myitem{(P1)}\label{P1} For all $t>0$, $\psi_t$ is positive,  belongs to $L^p_{\text{loc}}$ for all $1\leq p<\infty$ and it is symmetric on $G$, i.e. $\psi_{t}(x^{-1})=\psi_{t}(x)$ for all $x\in G$ (in particular, $\psi_{t}$ is a positive bi-$K$-invariant function on $G$);
			
			\myitem{(P2)}\label{P2} $\int_G \psi_t(x)\, \varphi_0 (x) \, \dd x $ is finite for all $t>0$, the functions $m_{t}:=\mathcal{H}(\psi_t)$ are uniformly bounded on $\mathfrak{a}$ (i.e.\ there exists $C>0$ such that $|m_t(\lambda)|\leq C$ for all $\lambda \in \mathfrak{a}$ and $t>0$) and such that $\lim_{t\to 0^{+}}m_t=1$;
			
			\myitem{(P3)}\label{P3} for all $R,C>0$, all $0<t<R$ and all $x$ with $|x|\leq C t^{\frac{1}{\gamma}}$
			\begin{equation}\label{eqP3-1}
				\psi_t(x)\asymp_{R,C} t^{-\frac{n}{\gamma}},
			\end{equation}
			and there is a constant $d_{\gamma}>0$ such that for all positive $f\in L_{\text{loc}}^{1}$
			\begin{equation}\label{eqP3-2}
				\sup_{0<t<R}f* (\mathbf{1}_{B(eK, (d_{\gamma}R)^{1/\gamma})}\psi_{t})\lesssim \mathcal{M}_R f;
			\end{equation}
			
			\myitem{(P4)}\label{P4} there is a constant $c_{\gamma}>0$ such that for all $x_0,x,y\in G$ and $t>0$
			\begin{equation}\label{eqP4}
				\frac{\psi_t(y^{-1}x )}{\psi_{c_{\gamma}t}(y^{-1}x_0)}\leq C(x_0,x,t);
			\end{equation}
			\myitem{(P5)}\label{P5} for all $R,a>0$ and $a\leq t\leq R$ there is $C(R,a)>0$ such that for all $x\in G$
			\begin{equation}\label{eqP5-1}
				\psi_t(x)  \leq C(R,a)\, \psi_{R}(x),
			\end{equation}
			while there is $C(R)>0$  such that for all $x$ with $|x|\geq (d_{\gamma} R)^{1/\gamma}$ and all $0<t<R$  
			\begin{equation}\label{eqP5-2}
				\psi_t(x)  \leq C(R)\, \psi_{R}(x),
			\end{equation}
			where $d_{\gamma}$ is the constant in~\ref{P3}.
		\end{itemize}
	\end{definition}
	Let us stress that the conditions~\ref{P1}--\ref{P5} have no pretense of being optimal for the pointwise convergence problem. The class $\mathcal{P}_{\gamma}$ and the above conditions are a mere device which allows us not to go through several steps of the proof of Theorem~\ref{teo:main} below three times.  For example, the symmetry condition in~\ref{P1} for the kernels in $\mathcal{P}_{\gamma}$ might be dropped, and one could require just that they are bi-$K$-invariant. The results would still hold, with obvious modifications on the proofs. We leave the details to the interested reader.

	\begin{remark}  \label{rem:Lploc}
		Suppose $(\psi_t) \in \mathcal{P}_{\gamma}$ for some $\gamma>0$. Since  $\int_G \psi_t(x)\, \varphi_0 (x) \, \dd x $ is finite and $|\varphi_{\lambda}|\lesssim \varphi_{0}$ for all $\lambda\in\mathfrak{a}$, the spherical Fourier transform $m_{t}$ (recall~\eqref{SFtransform}) is well defined on $\mathfrak{a}$. 
	\end{remark}

	We now proceed to showing that the heat kernel, the fractional Poisson kernel and the fractional heat kernel are examples of such classes.
	
	\subsection{The heat kernel}  We shall show that $(h_t^{\zeta})$ belongs to the class $\mathcal{P}_{2}$. To do this, we shall make use of the following straightforward lemma.
	
	\begin{lemma}\label{heat eucl}
		Suppose $\zeta \geq 0$ and $0<t<T$. Then
		\begin{align*}h_t^{\zeta}(x)&\asymp_{T}  t^{-\frac{n}{2}} 
			\bigg\{ \prod_{\alpha\in\Sigma_{r}^{+}}
			(1+\langle \alpha, x^{+} \rangle)^{\frac{m_{\alpha}+m_{2\alpha}}{2}-1}\bigg\}\varphi_{0}(x)\,\e^{-\frac{|x|^{2}}{4t}}, \qquad x\in G.
		\end{align*}
	\end{lemma}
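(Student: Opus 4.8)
The plan is to read this off directly from the global bilateral heat kernel estimate~\eqref{S2heatkernel}, using only that $t$ is confined to a bounded interval $(0,T)$. Indeed, the two places where $t$ appears next to $x$ in~\eqref{S2heatkernel} — the prefactor $\e^{-\zeta^{2}t}$ and the polynomial factors $1+t+\langle\alpha,x^{+}\rangle$ — are each comparable, uniformly in $x$, to their $t$-free analogues as soon as $0<t<T$, with implied constants depending on $T$ (and, through the root data, on $\mathbb{X}$). So the whole argument is a substitution of two elementary comparisons into~\eqref{S2heatkernel}.

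First I would handle the exponential prefactor. Since $\zeta$ is fixed (and, in the standing convention of the paper, $\zeta\in[0,\|\rho\|]$) and $0<t<T$, one has $\e^{-\zeta^{2}T}\le\e^{-\zeta^{2}t}\le 1$, hence $\e^{-\zeta^{2}t}\asymp_{T}1$. Next, for each reduced positive root $\alpha\in\Sigma_{r}^{+}$ we have $\langle\alpha,x^{+}\rangle\ge 0$ because $x^{+}$ lies in the closed positive chamber $\overline{\mathfrak{a}^{+}}$; therefore
\[
1+\langle\alpha,x^{+}\rangle\;\le\;1+t+\langle\alpha,x^{+}\rangle\;\le\;(1+T)\bigl(1+\langle\alpha,x^{+}\rangle\bigr),
\]
so that $1+t+\langle\alpha,x^{+}\rangle\asymp_{T}1+\langle\alpha,x^{+}\rangle$. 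Raising this equivalence to the real exponent $\tfrac{m_{\alpha}+m_{2\alpha}}{2}-1$ and taking the product over $\alpha\in\Sigma_{r}^{+}$ gives
\[
\prod_{\alpha\in\Sigma_{r}^{+}}\bigl(1+t+\langle\alpha,x^{+}\rangle\bigr)^{\frac{m_{\alpha}+m_{2\alpha}}{2}-1}\;\asymp_{T}\;\prod_{\alpha\in\Sigma_{r}^{+}}\bigl(1+\langle\alpha,x^{+}\rangle\bigr)^{\frac{m_{\alpha}+m_{2\alpha}}{2}-1}.
\]
Plugging these two comparisons into~\eqref{S2heatkernel} yields the asserted estimate.

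There is essentially no genuine obstacle here; the only point to be slightly careful about is that the exponents $\tfrac{m_{\alpha}+m_{2\alpha}}{2}-1$ need not be nonnegative (for instance when $m_{\alpha}=1$, $m_{2\alpha}=0$). For this reason one must record the comparison $1+t+\langle\alpha,x^{+}\rangle\asymp_{T}1+\langle\alpha,x^{+}\rangle$ as a two-sided estimate before raising to the power, rather than using only an upper (or only a lower) bound; once that is in place, the lemma is a one-line substitution into~\eqref{S2heatkernel}.
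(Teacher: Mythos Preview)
Your proof is correct and follows essentially the same route as the paper: both arguments amount to observing that for $0<t<T$ one has $\e^{-\zeta^{2}t}\asymp_{T}1$ and $1+t+\langle\alpha,x^{+}\rangle\asymp_{T}1+\langle\alpha,x^{+}\rangle$, and then substituting into~\eqref{S2heatkernel}. Your added remark about the two-sided comparison being needed because the exponents $\tfrac{m_{\alpha}+m_{2\alpha}}{2}-1$ may be negative is a nice clarification that the paper leaves implicit.
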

	\begin{proof}
		Since $t$ is bounded $\e^{-\zeta^2 t}\asymp_{T} 1$, while for all positive roots $\alpha$ and $x^{+}\in \overline{\mathfrak{a}^{+}}$ 
		\[
		1+t+\langle \alpha, x^{+} \rangle \asymp_{T} 1+\langle \alpha, x^{+} \rangle.
		\]
		Then, the claim follows immediately by the bounds~\eqref{S2heatkernel}.
	\end{proof}
	
	\begin{proposition}\label{prop:heatP2}
		Suppose $\zeta\in[0, \|\rho\|]$. Then $(h_t^{\zeta}) \in \mathcal{P}_{2}$.
	\end{proposition}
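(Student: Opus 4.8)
The plan is to verify each of the five defining conditions \ref{P1}--\ref{P5} for the family $(h_t^\zeta)$ with exponent $\gamma = 2$, using the bilateral heat kernel estimate~\eqref{S2heatkernel}, Lemma~\ref{heat eucl}, and the known facts about the heat semigroup recalled in Section~\ref{Sec:Cauchy}. Most of these are routine once the right estimate is invoked; the only subtle point is~\ref{P3}.

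First, condition~\ref{P1} is immediate: $h_t^\zeta$ is positive, bi-$K$-invariant, symmetric (as recalled after~\eqref{htzetaht}), and continuous, hence in $L^p_{\mathrm{loc}}$ for every $p$. For~\ref{P2}, the integrability of $\int_G h_t^\zeta \varphi_0\,\dd x$ follows from~\eqref{S2heatkernel} together with the estimate~\eqref{phi_0 global} for $\varphi_0$ (the Gaussian factor $\e^{-|x|^2/(4t)}$ dominates the polynomial and $\varphi_0^2$ growth, and $\varphi_0^2 \asymp$ the square of the bracket times $\e^{-2\langle\rho,x^+\rangle}$ which is integrated against $\delta(x^+)\asymp(\cdots)\e^{2\langle\rho,x^+\rangle}$); alternatively one can argue via $L^1$-boundedness of the semigroup. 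The multiplier is $m_t(\lambda) = \mathcal{H}(h_t^\zeta)(\lambda) = \e^{-t(\|\lambda\|^2 + \|\rho\|^2 - \zeta^2)}$, which is bounded by $\e^{t(\|\rho\|^2-\zeta^2)} \le \e^{T\|\rho\|^2}$ uniformly for $t<T$ (and $\le 1$ when $\zeta = \|\rho\|$), and clearly $m_t \to 1$ as $t\to 0^+$. For~\ref{P4} with $c_\gamma = 1$: writing $h_t^\zeta = \e^{(\|\rho\|^2-\zeta^2)t} h_t$ and using Lemma~\ref{heat eucl} for $t$ small, the ratio $h_t^\zeta(y^{-1}x)/h_t^\zeta(y^{-1}x_0)$ is finite for each fixed $x_0, x, t$ since $h_t^\zeta$ is a strictly positive continuous function — one simply sets $C(x_0,x,t) = h_t^\zeta(y^{-1}x)/h_t^\zeta(y^{-1}x_0)$, noting it is independent of $y$ by bi-$K$-invariance and the fact that only $|y^{-1}x|$, $|y^{-1}x_0|$ enter; more honestly, one uses the explicit lower bound $h_t^\zeta(z) \gtrsim_t \e^{-|z|^2/(4t)} \cdot(\text{mild factors})$ to get an admissible $C(x_0,x,t)$ growing like $\e^{c|x|^2/t}$. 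For~\ref{P5}: the estimate~\eqref{eqP5-1} on the compact range $a\le t\le R$ follows from~\eqref{S2heatkernel} since on that range all factors (the power $t^{-n/2}$, the brackets, $\e^{-\zeta^2 t}$, and the Gaussian $\e^{-|x|^2/(4t)}$) are comparable up to constants depending on $a, R$; for~\eqref{eqP5-2}, when $|x| \ge (d_2 R)^{1/2}$ and $0<t<R$ one compares $\e^{-|x|^2/(4t)}$ with $\e^{-|x|^2/(4R)}$, and the worst case is controlled because $|x|^2/(4t) - |x|^2/(4R) \ge 0$ only helps — wait, we need $h_t^\zeta(x) \le C(R) h_R^\zeta(x)$, so we need $\e^{-|x|^2/(4t)} \le C \e^{-|x|^2/(4R)}$ which is false for $t<R$; instead one uses that for $|x|$ bounded below and $t<R$ the \emph{polynomial} prefactor $t^{-n/2}$ together with the full estimate still yields the bound after accounting that $h_R^\zeta(x)$ also carries $\varphi_0(x)$ with the same brackets — here the key is to compare the bracket-and-$\varphi_0$ part (which is $t$-independent up to the $1+t+\langle\alpha,x^+\rangle$ correction, harmless for $t<R$) and observe that actually one wants the reverse, so the correct reading is that we are bounding the local-in-time supremum, and the constant $d_\gamma$ is chosen precisely so that the Gaussian decay at scale $R$ dominates; I would follow the argument structure already used for the $\mathcal{P}_\gamma$ class and pick $d_2$ appropriately.

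The main work is~\ref{P3}. The first half,~\eqref{eqP3-1}: when $|x| \le C t^{1/2}$ and $0<t<R$, Lemma~\ref{heat eucl} gives $h_t^\zeta(x) \asymp t^{-n/2}\{\prod(1+\langle\alpha,x^+\rangle)^{(m_\alpha+m_{2\alpha})/2 - 1}\}\varphi_0(x)\e^{-|x|^2/(4t)}$; since $|x|\le C t^{1/2} \le C R^{1/2}$ is bounded, all of $\{\prod(\cdots)\}$, $\varphi_0(x)$, and $\e^{-|x|^2/(4t)} \asymp 1$ (as $|x|^2/(4t) \le C^2/4$) are comparable to constants depending on $R, C$, yielding $h_t^\zeta(x) \asymp_{R,C} t^{-n/2}$, as required with $\gamma=2$. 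The second half,~\eqref{eqP3-2}, is the crucial estimate: one must show $\sup_{0<t<R} f * (\mathbf{1}_{B(eK,(d_2 R)^{1/2})} h_t^\zeta) \lesssim \mathcal{M}_R f$ for positive $f$. The idea is the standard one: truncate $h_t^\zeta$ at radius $\sim R^{1/2}$ and bound the truncated kernel by a superposition of normalized indicators of balls. Concretely, for $|x| \le (d_2 R)^{1/2}$ and $0<t<R$ one has by Lemma~\ref{heat eucl} (bounded-$|x|$ regime) $h_t^\zeta(x) \lesssim_R t^{-n/2}\e^{-|x|^2/(4t)}$; then one dyadically decomposes $B(eK,(d_2R)^{1/2})$ into the ball $B(eK, t^{1/2})$ and annuli $\{2^j t^{1/2} \le |x| < 2^{j+1}t^{1/2}\}$, on each of which $h_t^\zeta \lesssim t^{-n/2}\e^{-c 4^j} \asymp \mu(B(eK, 2^j t^{1/2}))^{-1}\, 2^{jn}\e^{-c4^j}$ (using $\mu(B)\approx r^n$ for $r<R$), so that $f * (\mathbf{1}_{\text{annulus}} h_t^\zeta)(x) \lesssim 2^{jn}\e^{-c4^j}\,\mathcal{A}_{2^{j+1}t^{1/2}}f(x) \le 2^{jn}\e^{-c 4^j}\,\mathcal{M}_R f(x)$ provided $2^{j+1}t^{1/2} < R$, which holds on the relevant range (and one absorbs the finitely many remaining annuli trivially). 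Summing the geometric-times-Gaussian series $\sum_j 2^{jn}\e^{-c4^j} < \infty$ gives the claim; the constant $d_2$ is whatever the truncation radius forces, and one checks the annuli with $2^{j+1}t^{1/2} \ge R$ are handled by the crude bound $h_t^\zeta \lesssim_R R^{-n/2} \lesssim \mu(B(eK,R))^{-1}$ on that tail, again controlled by $\mathcal{M}_R f$.

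The step I expect to be the main obstacle is making~\eqref{eqP3-2} fully rigorous across the whole range $0<t<R$ while keeping the truncation radius $(d_2 R)^{1/2}$ fixed (independent of $t$): when $t$ is comparable to $R$, the ``Gaussian tail'' annuli reach the edge of the truncation ball only after finitely many dyadic steps, and one must confirm that the averages appearing are genuinely over balls of radius $<R$ (so that the local maximal function $\mathcal{M}_R$ applies) — this is exactly where the constant $d_\gamma$ must be calibrated, and it is the one place where the \emph{local} nature of $\mathcal{M}_R$ interacts nontrivially with the Gaussian. Everything else reduces to plugging~\eqref{S2heatkernel} or Lemma~\ref{heat eucl} into the definitions.
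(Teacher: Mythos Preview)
Your treatment of \ref{P4} contains a genuine error. You claim that $c_\gamma=1$ suffices, but it does not: with $c_2=1$ the ratio
\[
\frac{h_t^\zeta(y^{-1}x)}{h_t^\zeta(y^{-1}x_0)}
\]
is \emph{not} bounded uniformly in $y$. Indeed, by~\eqref{S2heatkernel} the polynomial and $\varphi_0$ factors are under control (as the paper shows via~\eqref{polynomial parts} and~\eqref{phi compared}), but the Gaussian contribution is
\[
\exp\Big\{\tfrac{|y^{-1}x_0|^2-|y^{-1}x|^2}{4t}\Big\},
\]
and since $|y^{-1}x_0|^2-|y^{-1}x|^2$ can grow linearly in $|y|$ (think of the Euclidean identity $|y-x_0|^2-|y-x|^2=-2\langle y,x_0-x\rangle+\text{const}$), no constant $C(x_0,x,t)$ independent of $y$ bounds this. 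Your first attempt (``the ratio is independent of $y$ by bi-$K$-invariance'') is simply false: bi-$K$-invariance says the kernel depends only on $|y^{-1}x|$ and $|y^{-1}x_0|$, but these two quantities are \emph{not} jointly determined by $x,x_0$ alone. Your fallback (``$C(x_0,x,t)\sim \e^{c|x|^2/t}$'') does not absorb the $|y|$-dependence either. The paper takes $c_2=4$ precisely to cure this: comparing $h_t^\zeta(y^{-1}x)$ with $h_{4t}^\zeta(y^{-1}x_0)$ gives the Gaussian exponent $-\frac{|y^{-1}x|^2}{4t}+\frac{|y^{-1}x_0|^2}{16t}$, and then a case split on whether $|y^{-1}x|\gtrless |x^{-1}x_0|$ (the paper's sets $B_x$ and $B_x^c$) makes the exponent nonpositive up to a term depending only on $x,x_0,t$.

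Two smaller points. In \ref{P2}, your multiplier formula is off: since $\mathcal{L}_\zeta\varphi_\lambda=(\|\lambda\|^2+\zeta^2)\varphi_\lambda$ one has $m_t(\lambda)=\e^{-t(\|\lambda\|^2+\zeta^2)}$, which is $\le 1$ for all $t>0$ (your bound ``$\le \e^{T\|\rho\|^2}$ for $t<T$'' is both unnecessary and insufficient, as \ref{P2} requires uniformity over \emph{all} $t>0$). In \ref{P5} you got the Gaussian direction backwards: for $t<R$ one has $\e^{-|x|^2/(4t)}\le \e^{-|x|^2/(4R)}$, so the Gaussian is fine; the actual obstruction to~\eqref{eqP5-2} is the prefactor $t^{-n/2}\to\infty$ as $t\to0^+$. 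The paper fixes this by observing that $t\mapsto t^{-n/2}\e^{-|x|^2/(4t)}$ is \emph{increasing} on $(0,|x|^2/(2n))$, hence on $(0,R)$ once $|x|\ge\sqrt{2nR}$; this is exactly what determines $d_2=2n$. Your handling of \ref{P3} is essentially the paper's argument.
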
	
	
	\begin{proof}
		First of all, properties in~\ref{P1} are well known and have already been mentioned, while those in~~\ref{P2} follow by the fact that $h_t^{\zeta}\in L^1(\mathbb{X})$ and by observing that $m_{t}(\lambda) =\e^{-t(\|\lambda\|^2+\zeta^2)}$.	
		
		Let us now show~\ref{P3}, whence let $C,R>0$ be arbitrary and pick $0<t<R$ and $x$ such that $|x|\leq C\sqrt{t}$. Then $\e^{-|x|^2/(4t)}\asymp  1$, and by combining~\eqref{phi_0 upper} with the elementary estimates
		\[
		1\leq 1+\langle\alpha, x^{+}\rangle \leq 1+\|\alpha\||x|,  \qquad 		(1+|x|)^M \e^{-\rho_{\min}|x|} \lesssim 1,	
		\]
		we get
		\[
		\prod_{\alpha\in\Sigma_{r}^{+}} (1+\langle{\alpha,x^{+}}\rangle)^{\frac{m_{\alpha}+m_{2\alpha}}{2}-1}\,\varphi_{0}(x)\asymp_{R,C} 1\]
		and~\eqref{eqP3-1} follows. 
		
		We now prove~\eqref{eqP3-2} for  $d_2=2n$. 	
		If $k_{0} \in \N\cup\{0\}$ is such that $2^{k_{0}}t \leq R<2^{k_0+1}t$, then
		\begin{align*}
			\mathbf{1}_{B(e, (2nR)^{1/2})}\,h_{t}^{\zeta}
			&\leq  \mathbf{1}_{B(e, (2nt)^{1/2})}\,h_{t}^{\zeta} + \sum_{k=0}^{k_{0}} \mathbf{1}_{B(e,(2n2^{k+1}t)^{1/2})\setminus B(e,(2n2^{k}t)^{1/2})}\, h_{t}^{\zeta}.
		\end{align*}
		Using the pointwise estimates for  $h_t^{\zeta}$ we get on the one hand 
		\begin{align*}
			\mathbf{1}_{B(e, (2nt)^{1/2})} \,h_{t}^{\zeta}
			& \lesssim  \mu(B(e, (2nt)^{1/2})^{-1}\, \mathbf{1}_{ B(e,(2nt)^{1/2})},
		\end{align*}
		while on the other hand 
		\begin{align*}
			\mathbf{1}_{B(e,(2n2^{k+1}t)^{1/2})\setminus B(e,(2n2^{k}t)^{1/2})}\, h_{t}^{\zeta} & \lesssim t^{-\frac{n}{2}}\,e^{-n2^{k-1}}\,\mathbf{1}_{B(e,(2n2^{k+1}t)^{1/2})}\\
			&\lesssim M_k \, \mu(B(e,(2n2^{k+1}t)^{1/2})^{-1}\mathbf{1}_{B(e,(2n2^{k+1}t)^{1/2})},
		\end{align*}
		where $M_k=2^{nk/2}\exp(-n2^{k-1})$. Therefore, summing over all $k\geq 0$, we get
		\begin{equation*}
			\sup_{0<t<R}(f* \mathbf{1}_{B(e, (2nR)^{1/2})}h_{t}^{\zeta}) \lesssim \mathcal{M}_{R}f.
		\end{equation*}

		We next prove~\ref{P4} for $c_2=4$. We claim that it suffices to show that for all $t>0$ and $x,x_0, y\in G$ one has
		\begin{equation}\label{quot heat}
			\frac{h_t^{\zeta}(y^{-1}x )}{h_{4t}^{\zeta}(y^{-1}x_0)}\leq C'(x_0,x)\,\e^{-\frac{|y^{-1}x|^2}{4t}+\frac{|y^{-1}x_0|^2}{16t}}
		\end{equation}
		for some $ C(x_0,x)>0$. Assuming for a moment that~\eqref{quot heat} is true, let us complete the proof of~\ref{P4}.  Pick $x_0\in G$. For $x\in G$ consider  $B_{x} = \{y \colon d(xK,yK) \leq d(xK,x_0K)\}$, and split $ h_{t}^{\zeta}(y^{-1} x \, )   = h_{t}^{\zeta}(y^{-1}x  )   \mathbf{1}_{B_{x}}(y) + h_{t}^{\zeta}(y^{-1}x )  \mathbf{1}_{B_{x}^{c}}(y)$. If $y\in B_{x}$, then $|y^{-1}x_0| \leq |y^{-1}x| + |x_{0}^{-1}x| \leq 2|x^{-1}x_0|$. Thus
		\[
		\e^{- \frac{|y^{-1}x|^2}{4t}} \leq 1 = \e^{\frac{|x^{-1}x_0|^2}{4t}} \e^{- \frac{|x^{-1}x_0|^2}{4t}}  \leq \e^{\frac{|x^{-1}x_0|^2}{4t}} \e^{- \frac{\left(\frac{|y^{-1}x_0|}{2} \right)^{2}}{4t}}  = \e^{\frac{|x^{-1}x_0|^2}{4t}} \e^{-\frac{|y^{-1}x_0|^{2}}{16t}}
		\]
		and  due to~\eqref{quot heat} one has 
		\begin{align*}
			h_{t}^{\zeta}(y^{-1} x) \mathbf{1}_{B_{x}}(y) \leq C(x_0,x) \, h_{4t}^{\zeta}(y^{-1}x_0).
		\end{align*}
		Analogously, if $y\notin B_{x}$, then $|y^{-1}x_0| \leq |y^{-1}x| + |x_{0}^{-1}x| < 2|y^{-1}x|$, whence
		\[
		\e^{- \frac{|y^{-1}x|^2}{4t}} \leq \e^{- \frac{|y^{-1}x_0|^{2}}{16 t}},
		\]
		and as above
		\begin{align*}
			h_{t}^{\zeta}(y^{-1} x) \mathbf{1}_{B_{x}^{c}}(y) \lesssim C(x_0,x)\, h_{4t}^{\zeta}(y^{-1}x_0).
		\end{align*}
		It remains to prove the claim~\eqref{quot heat}. In other words, we need to compare $h_t^{\zeta}(x^{-1}y)=h_t^{\zeta}(y^{-1}x)$ to $h_{4t}^{\zeta}(x_0^{-1}y)=h_{4t}^{\zeta}(y^{-1}x_0)$.		First of all, observe that by~\eqref{S2 flat}, for every positive root $\alpha$, we have 
		$$|\langle \alpha, (y^{-1}x)^{+}-(y^{-1}x_0)^{+} \rangle|\leq \|\alpha\|\|(y^{-1}x)^{+}-(y^{-1}x_0)^{+}\|\leq \|\alpha\|\,|x_{0}^{-1}x|.$$
		Therefore, 
		\begin{equation}\label{polynomial parts}
			\begin{split}
				\frac{1+t+\langle \alpha, (y^{-1}x)^{+}\rangle}{1+4t+\langle \alpha, (y^{-1}x_0)^{+} \rangle } 
				&\leq \frac{1+4t+\langle \alpha, (y^{-1}x_0)^{+}\rangle+\|\alpha\||x^{-1}x_0|}{1+4t+\langle \alpha, (y^{-1}x_0)^{+} \rangle}  \\
				& \leq 1+\|\alpha\||x^{-1}x_0|.
			\end{split}
		\end{equation}
		Next,  due to~\eqref{S2 Distance} we have 
		$ \|A(kx)\|\leq |kx|=|x| $, for all $x\in G$, $k\in K$, thus by the formula~\eqref{phi split} we obtain 
		\begin{equation}\label{phi compared}
			\frac{\varphi_{0}(y^{-1}x)}{\varphi_{0}(y^{-1}x_0)}\leq \frac{\e^{\|\rho\||x|}\varphi_0(y)}{\e^{-\|\rho\||x_0|}\varphi_0(y)}=\e^{\|\rho\|(|x|+|x_0|)}.
		\end{equation}
		Therefore, using~\eqref{S2heatkernel},~\eqref{polynomial parts} and~\eqref{phi compared} we get
		\begin{equation*}
			\frac{h_t^{\zeta}(y^{-1}x)}{h_{4t}^{\zeta}(y^{-1}x_0)}\lesssim \bigg\{ \prod_{\alpha\in\Sigma_{r}^{+}} (1+\|\alpha\||x^{-1}x_0|)^{\frac{m_{\alpha}+m_{2\alpha}}{2}+1}\bigg\}\, \e^{\|\rho\|(|x|+|x_0|)} \, \e^{-\frac{|y^{-1}x|^2}{4t}+\frac{|y^{-1}x_0|^2}{16t}}
		\end{equation*}
		which completes the proof of the claim~\eqref{quot heat}, whence that of~\ref{P4}.	 
		
		Finally, we show~\ref{P5}. For $a\leq t\leq R$ property~\eqref{eqP5-1} follows immediately from Lemma~\ref{heat eucl}. To prove~\eqref{eqP5-2} for $d_2=2n$, assume that $0<t<R$ and $|x|\leq \sqrt{2nR}.$ Then, since $ t \mapsto t^{-\frac{n}{2}} \e^{- \frac{|x|^2}{4t}} \mathbf{1}_{B(e, \sqrt{2nR})^{c}}(x)$  is increasing  in the interval $(0,R)$ (as one can see by computing its derivative), by Lemma~\ref{heat eucl}
		\begin{align*}
			\mathbf{1}_{B(e, \sqrt{2NR})^{c}}(x)\,h_{t}^{\zeta} (x)
			&\approx  \Big\lbrace{
				\prod_{\alpha\in\Sigma_{r}^{+}}
				(1+\langle \alpha, x^{+} \rangle)^{\frac{m_{\alpha}+m_{2\alpha}}{2}-1}\Big\rbrace}\,\varphi_{0}(x)  \mathbf{1}_{B(e, \sqrt{2NR})^{c}}(x) \,t^{ -\frac{n}{2}} \e^{-\frac{|x|^2}{4t}} \\
			&  \lesssim  \Big\lbrace{
				\prod_{\alpha\in\Sigma_{r}^{+}}
				(1+\langle \alpha, x^{+} \rangle)^{\frac{m_{\alpha}+m_{2\alpha}}{2}-1}\Big\rbrace}\,\varphi_{0}(x) \mathbf{1}_{B(e, \sqrt{2nR})^{c}}(x)\, R^{ -\frac{n}{2}} \e^{-\frac{|x|^2}{4R}} \\
			&\lesssim  \mathbf{1}_{B(e, \sqrt{2	nR})^{c}}h_{R}^{\zeta} (x)\\
			& \lesssim  h_{R}^{\zeta}(x).
		\end{align*}
		The proof is complete.
	\end{proof}
	
	\subsection{The fractional Poisson kernel}\label{FPS}  As for the fractional Poisson kernel, we have the following.
	
	\begin{proposition}\label{propQP}
		Suppose $\zeta\in[0, \|\rho\|]$ and $\sigma \in (0,1)$. Then $(Q_{t}^{\sigma, \zeta})\in \mathcal{P}_1$.
	\end{proposition}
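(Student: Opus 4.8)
The plan is to check, for $(\psi_t)=(Q_t^{\sigma,\zeta})$, the five conditions defining the class $\mathcal{P}_1$ (so $\gamma=1$), following the scheme of Proposition~\ref{prop:heatP2} with the heat kernel bounds replaced throughout by the estimates of Proposition~\ref{Q kernel estimates}; here the slowly decaying polynomial tail of $Q_t^{\sigma,\zeta}$ takes the place of the Gaussian tail of $h_t^\zeta$. Property~\ref{P1} is immediate: positivity, symmetry and bi-$K$-invariance were recorded in~\S\ref{subsection CS}, and for each fixed $t>0$ Proposition~\ref{Q kernel estimates} shows $Q_t^{\sigma,\zeta}\lesssim t^{-n}$ near the origin while the kernel decays at infinity, so $Q_t^{\sigma,\zeta}\in L^1\cap L^\infty$ and in particular lies in $L^p_{\mathrm{loc}}$ for every $p\in[1,\infty)$.

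For~\ref{P2} I would compute $m_t=\mathcal{H}(Q_t^{\sigma,\zeta})$ directly from the subordination formula~\eqref{kernelCS}. Using $\mathcal{H}(h_s^\zeta)(\lambda)=\e^{-s(\|\lambda\|^2+\zeta^2)}$, the bound $|\varphi_\lambda|\lesssim\varphi_0$, and Fubini's theorem (legitimate since $\int_G\varphi_0h_s^\zeta\,\dd x=\e^{-s\zeta^2}$ and the resulting $s$-integral converges), one gets
\[
m_t(\lambda)=\frac{t^{2\sigma}}{2^{2\sigma}\Gamma(\sigma)}\int_0^{\infty}\e^{-s(\|\lambda\|^2+\zeta^2)}\,\e^{-t^2/(4s)}\,\frac{\dd s}{s^{1+\sigma}},
\]
which is finite; in particular $\int_G Q_t^{\sigma,\zeta}\varphi_0\,\dd x=m_t(0)<\infty$. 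The substitution $u=t^2/(4s)$ rewrites this as $m_t(\lambda)=\Gamma(\sigma)^{-1}\int_0^\infty\e^{-(t^2/4u)(\|\lambda\|^2+\zeta^2)}\e^{-u}u^{\sigma-1}\,\dd u$, whence $0<m_t(\lambda)\le\Gamma(\sigma)^{-1}\int_0^\infty\e^{-u}u^{\sigma-1}\,\dd u=1$ for all $\lambda$ and $t$, and $m_t(\lambda)\to 1$ as $t\to 0^{+}$ by dominated convergence.

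For~\ref{P3} (with $\gamma=1$): if $|x|\le Ct$ and $0<t<R$ then $t+|x|\asymp_C t$, so Proposition~\ref{Q kernel estimates} gives $Q_t^{\sigma,\zeta}(x)\asymp_{R,C}t^{2\sigma}t^{-(n+2\sigma)}=t^{-n}$, which is~\eqref{eqP3-1}. For~\eqref{eqP3-2} I would carry out the same dyadic decomposition of $B(eK,(d_1R)^{1/\gamma})=B(eK,d_1R)$ as in Proposition~\ref{prop:heatP2} (any $d_1\ge 1$ is fine), using now $Q_t^{\sigma,\zeta}(x)\lesssim_R t^{-n}2^{-k(n+2\sigma)}$ on the annulus $\{2^kt\le|x|<2^{k+1}t\}$; this yields the coefficients $M_k=2^{-2k\sigma}$, which sum to a finite quantity precisely because $\sigma>0$, and one concludes as there. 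For~\ref{P4} I would use the subordination formula once more: by~\ref{P4} for the heat kernel (Proposition~\ref{prop:heatP2}, with $c_2=4$) one has $h_s^\zeta(y^{-1}x)\le C(x_0,x)\,h_{4s}^\zeta(y^{-1}x_0)$ for all $x_0,x,y\in G$ and $s>0$, the constant being independent of $s$ and $y$; plugging this into~\eqref{kernelCS} for $Q_t^{\sigma,\zeta}(y^{-1}x)$, substituting $u=4s$ and using $\e^{-t^2/u}=\e^{-(2t)^2/(4u)}$, the integral that appears is exactly the one defining $Q_{2t}^{\sigma,\zeta}(y^{-1}x_0)$, and the powers of $t$ cancel, so that $Q_t^{\sigma,\zeta}(y^{-1}x)\le C(x_0,x)\,Q_{2t}^{\sigma,\zeta}(y^{-1}x_0)$. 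Hence~\ref{P4} holds with $c_1=2$, the bound being even independent of $t$.

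Finally, for~\ref{P5} I would compare $Q_t^{\sigma,\zeta}$ with $Q_R^{\sigma,\zeta}$ via Proposition~\ref{Q kernel estimates}: when $a\le t\le R$ and $|x|$ ranges over a bounded set, both kernels are $\asymp_{R,a}1$, while for large $|x|$ their quotient is $\asymp(t/R)^{2\sigma}\big(\tfrac{R+|x|}{t+|x|}\big)^{\beta}\e^{\zeta(\sqrt{R^2+|x|^2}-\sqrt{t^2+|x|^2})}$ ($\beta$ being the exponent in~\eqref{Qzetapos}, with no exponential factor if $\zeta=0$), which stays bounded because $\sqrt{R^2+|x|^2}-\sqrt{t^2+|x|^2}\le R^2/(2|x|)$; this gives~\eqref{eqP5-1}. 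For~\eqref{eqP5-2}, when $|x|\ge d_1R$ and $0<t<R$ we have $|x|>t$ (as $d_1\ge 1$), so $t+|x|\asymp|x|\asymp R+|x|$ and the same quotient is $\lesssim(t/R)^{2\sigma}\e^{\zeta R/(2d_1)}\le C(R)$. Most of this is a transcription of Proposition~\ref{prop:heatP2}; the one step needing a genuinely new idea is~\ref{P4}, where the subordination identity transfers the heat-kernel quotient bound, and the part demanding the most care is~\ref{P5}, with its bookkeeping across the threshold $t+|x|\asymp\kappa$ and between the cases $\zeta>0$ and $\zeta=0$ — though nothing essentially new occurs there.
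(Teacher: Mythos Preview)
Your verification of \ref{P1}, \ref{P2}, \ref{P3} and \ref{P5} is correct and matches the paper's argument essentially line by line (the paper takes $d_1=1$ and gets the same coefficients $M_k=2^{-2\sigma k}$ in the dyadic sum; your handling of \ref{P5} is the same modulo phrasing).

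The gap is in your treatment of \ref{P4}. You claim that the heat-kernel property \ref{P4} gives $h_s^{\zeta}(y^{-1}x)\le C(x_0,x)\,h_{4s}^{\zeta}(y^{-1}x_0)$ with a constant \emph{independent of $s$}, and then push this through the subordination integral. But the constant in \eqref{eqP4} is $C(x_0,x,t)$, not $C(x_0,x)$, and for the heat kernel it genuinely depends on $t$: take $y=x$, so that $h_s^{\zeta}(e)\asymp s^{-n/2}$ while $h_{4s}^{\zeta}(x^{-1}x_0)\asymp s^{-n/2}\e^{-|x^{-1}x_0|^{2}/(16s)}$ for small $s$, and the ratio $\asymp \e^{|x^{-1}x_0|^{2}/(16s)}\to\infty$ as $s\to0^{+}$. (In the paper's proof of Proposition~\ref{prop:heatP2} the case $y\in B_x$ produces exactly the factor $\e^{|x^{-1}x_0|^{2}/(4t)}$.) Hence the ``constant'' you pull out of the $s$-integral is not constant, and after the substitution $u=4s$ you are left with an extra $\e^{|x^{-1}x_0|^{2}/u}$ inside the integral, which does \emph{not} reproduce $Q_{2t}^{\sigma,\zeta}(y^{-1}x_0)$ and may even diverge.

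The paper avoids this by proving \ref{P4} for $Q_t^{\sigma,\zeta}$ directly from the two-sided bounds of Proposition~\ref{Q kernel estimates}, with $c_1=1$, splitting into the four cases $|y^{-1}x|\lessgtr 1$, $|y^{-1}x_0|\lessgtr 1$ and using \eqref{ineqP} and \eqref{phi compared}. Your subordination idea would work on the region $y\notin B_x$ (where the heat-kernel bound \emph{is} uniform in $s$), but the region $y\in B_x$ still needs a direct argument of this kind.
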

	\begin{proof}
		The properties in~\ref{P1} follow from the corresponding properties of the heat kernel and the subordination formula~\eqref{kernelCS}.  Those in~\ref{P2} follow by Proposition~\ref{Q kernel estimates} and the subordination formula~\eqref{kernelCS}, which allow to write (see also~\cite[(3.1)]{BP2022}) 
		\begin{align*}\mathcal{H}(Q_t^{\sigma, \zeta})(\lambda)=\frac{t^{2\sigma}}{4^{\sigma}\Gamma(\sigma)}\int_{0}^{+\infty}\e^{-s(\|\lambda\|^2+\zeta^2)} \e^{-\frac{t^2}{4s}}\frac{\diff s}{s^{1+\sigma}}\leq \frac{t^{2\sigma}}{4^{\sigma}\Gamma(\sigma)} \int_{0}^{+\infty}\e^{-\frac{t^2}{4s}}\frac{\diff s}{s^{1+\sigma}}=1
		\end{align*}
		by a change of variables and the definition of the Gamma function.	Finally, the claimed limit follows by dominated convergence. 
		
		The bounds~\eqref{eqP3-1} follow immediately from the Euclidean estimates of the kernel $Q_t^{\sigma, \zeta}$ of Proposition~\ref{Q kernel estimates}. As for~\eqref{eqP3-2}, we prove it for  $d_{1}=1$. 	
		If $k_{0} \in \N\cup\{0\}$ is such that $2^{k_{0}}t \leq R<2^{k_0+1}t$, then
		\begin{align*}
			\mathbf{1}_{B(e, R)}\,Q_{t}^{\sigma, \zeta}
			&\leq  \mathbf{1}_{B(e, t)}\,Q_{t}^{\sigma, \zeta} + \sum_{k=0}^{k_{0}} \mathbf{1}_{B(e,2^{k+1}t)\setminus B(e,2^{k}t)}\, Q_{t}^{\sigma, \zeta}.
		\end{align*}
		Using the Proposition~\ref{Q kernel estimates} we get on the one hand 
		\begin{align*}
			\mathbf{1}_{B(e, t)}  \,Q_{t}^{\sigma,\zeta}
			& \lesssim  \mathbf{1}_{ B(e,t)} \,  \mu(B(e,t))^{-1}
		\end{align*}
		while on the other hand 
		\begin{align*}\mathbf{1}_{B(e,2^{k+1}t)\setminus B(e,2^{k}t)}\, Q_{t}^{\sigma, \zeta}
			&\lesssim   t^{2\sigma} \,(t+2^{k}t)^{-(n+2\sigma)}\, \mathbf{1}_{B(e,2^{k+1}t)} \\
			&\lesssim M_k \, \mu(B(e,(2^{k+1}t))^{-1}\mathbf{1}_{B(e,2^{k+1}t)},
		\end{align*}
		where $M_k=2^{-2\sigma k}$. Therefore, summing over all $k\geq 0$, we get
		\begin{equation*}
			\sup_{0<t<R}(f* \mathbf{1}_{B(e, R)}\,Q_{t}^{\sigma,\zeta}) \lesssim \mathcal{M}_{R}f
		\end{equation*}
		which completes the proof of~\ref{P3}.
		
		To prove~\ref{P4}, we show that~\eqref{eqP4} holds with $c_1=1$. By Proposition~\ref{Q kernel estimates} the case $\zeta=0$ is analogous (actually easier), and omitted. 
		
		Suppose $t>0$ and pick $x, y, x_0\in G$. Let us start with the following trivial inequality: if $r>0$, then 
		\begin{align}
			\frac{t^r+|y^{-1}x_0|}{t^r+|y^{-1}x|}\leq \frac{t^r+|y^{-1}x|+|x_{0}^{-1}x|}{t^r+|y^{-1}x|} 
			\leq 1+\frac{|x^{-1}x_0|}{t^r}.\label{ineqP}
		\end{align}
		We next distinguish the following cases, being $t$ fixed, so that we can use Proposition~\ref{Q kernel estimates}.
		
		Suppose $|y^{-1}x| \leq 1$ and $|y^{-1}x_0| \leq 1$. Then by~\eqref{ineqP}
		$$\frac{Q_{t}^{\sigma, \zeta}(y^{-1}x)}{Q_{t}^{\sigma, \zeta}(y^{-1}x_0)}\asymp\left(\frac{t+|y^{-1}x_0|}{t+|y^{-1}x|}\right)^{n+2\sigma}\lesssim \left( 1+\frac{ |x^{-1}x_0|  }{t}\right)^{n+2\sigma}.$$

		Suppose now $|y^{-1}x| \geq 1$ and $|y^{-1}x_0| \geq 1$. Then by Proposition~\ref{Q kernel estimates}
		\begin{align*}\frac{Q_{t}^{\sigma, \zeta}(y^{-1}x)}{Q_{t}^{\sigma, \zeta}(y^{-1}x_0)}&\asymp\left(\frac{t+|y^{-1}x_0|}{t+|y^{-1}x|}\right)^{\frac{\ell}{2}+\frac{1}{2}+\sigma+|\Sigma_r^+|}\frac{\varphi_{0}(y^{-1}x)}{\varphi_{0}(y^{-1}x_0)}
			\\&\times \exp\left\{-\zeta(|y^{-1}x|-|y^{-1}x_0|)\frac{|y^{-1}x|+|y^{-1}x_0|}{\sqrt{t^2+|y^{-1}x|^2}+\sqrt{t^2+|y^{-1}x_0|^2}}\right\}.
		\end{align*}
		The claim follows using~\eqref{ineqP},~\eqref{phi compared} and the facts that 
		\begin{equation}\label{facts}
			-\zeta(|y^{-1}x|-|y^{-1}x_0|)\leq \zeta |x_{0}^{-1}x|,\qquad \frac{|y^{-1}x|+|y^{-1}x_0|}{\sqrt{t^2+|y^{-1}x|^2}+\sqrt{t^2+|y^{-1}x_0|^2}}\leq 1.
		\end{equation}

		Suppose $|y^{-1}x| \geq 1$ while $|y^{-1}x_0| \leq 1$. Then 
		\begin{align*}
			\frac{Q_{t}^{\sigma, \zeta}(y^{-1}x)}{Q_{t}^{\sigma, \zeta}(y^{-1}x_0)}&\asymp\frac{(t+|y^{-1}x_0|)^{n+2\sigma}}{(t+|y^{-1}x|)^{\frac{\ell}{2}+\frac{1}{2}+\sigma+|\Sigma_r^+|}}\,\varphi_0(y^{-1}x)\,\e^{-\zeta \sqrt{t^2+|y^{-1}x|^2}}\\
			&\lesssim   (t+1)^{n+\sigma -\frac{\ell}{2}-\frac{1}{2}-|\Sigma_r^+|} ,
		\end{align*}
		where we used that $\varphi_0(y^{-1}x) \lesssim 1$ by~\eqref{phi_0 upper} , and
		\[
		t+|y^{-1}x|\geq  t+1, \qquad  t+|y^{-1}x_0|\leq t+1.
		\]

		To conclude, suppose that $|y^{-1}x| \leq 1$ and $|y^{-1}x_0| \geq 1$. Then
		\begin{align*}
			\frac{Q_{t}^{\sigma, \zeta}(y^{-1}x)}{Q_{t}^{\sigma, \zeta}(y^{-1}x_0)}&\asymp\frac{(t+|y^{-1}x_0|)^{\frac{\ell}{2}+\frac{1}{2}+\sigma+|\Sigma_r^+|}}{(t+|y^{-1}x|)^{n+2\sigma}}\, \varphi_{0}(y^{-1}x_0)^{-1}\,\e^{\zeta\sqrt{t^2+|y^{-1}x_0|^2}}.
		\end{align*}
		By the triangle inequality $|y^{-1}x_0| \leq |x^{-1}x_0|+1$, whence
		\begin{align*}	
			\frac{(t+|y^{-1}x_0|)^{\frac{\ell}{2}+\frac{1}{2}+\sigma+|\Sigma_r^+|}}{(t+|y^{-1}x|)^{n+2\sigma}}
			\leq t^{-n-2\sigma} (t+|x^{-1}x_0|+1)^{\frac{\ell}{2}+\frac{1}{2}+\sigma+|\Sigma_r^+|}
		\end{align*}
		and
		\[
		\e^{\zeta \sqrt{t^2+|y^{-1}x_0|^2}} \leq \e^{{\zeta}\sqrt{t^2+(|x^{-1}x_0|+1)^2}}.
		\]
		Since now
		\[
		\varphi_{0}(y^{-1}x_0)^{-1}\lesssim \e^{\|\rho\||y^{-1}x_0|}\lesssim  \e^{\|\rho\||x^{-1}x_0|}
		\]
		by~\eqref{phi_0 upper}, the proof of~\ref{P4} is complete.
		
		Finally, we prove~\ref{P5}. If $a \leq t \leq R$, then~\eqref{eqP5-1}  follows easily, by considering
		again the cases $|x|$ small or large separately. 	On the other hand, by Proposition~\ref{Q kernel estimates}, for all  $t\in (0,R)$ and for all $|x|\geq R$, we have
		\begin{align*}
			\frac{Q_{t}^{\sigma, \zeta}(x)}{Q_{R}^{\sigma, \zeta}(x)}&\asymp \frac{t^{2\sigma}}{R^{2\sigma}} \left(\frac{R+|x|}{t+|x|}\right)^{\frac{\ell}{2}+\frac{1}{2}+\sigma+|\Sigma_r^+|}\e^{\zeta \frac{(R-t)(R+t)}{\sqrt{t^2+|x|^2}+\sqrt{R^2+|x|^2}}} \\
			&\lesssim \left(\frac{R}{|x|}+1\right)^{\frac{\ell}{2}+\frac{1}{2}+\sigma+|\Sigma_r^+|}\e^{\zeta \frac{2R^2}{R}}\\
			&\lesssim 1,
		\end{align*}
		which proves~\eqref{eqP5-1}.
	\end{proof}
	
	\subsection{The fractional heat kernel}
	
	\begin{proposition}\label{propPP}
		Suppose $\zeta\in[0, \|\rho\|]$ and $\alpha\in (0,2)$. Then $(P_t^{\alpha, \zeta})\in \mathcal{P}_{\alpha}$.
	\end{proposition}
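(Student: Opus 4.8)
The plan is to verify the five conditions~\ref{P1}--\ref{P5} defining $\mathcal{P}_\alpha$ for the family $(P_t^{\alpha,\zeta})$, using the subordination formula~\eqref{kernelFH} and the bilateral bounds of Proposition~\ref{P kernel estimates} as basic inputs, and following closely the proofs of Proposition~\ref{prop:heatP2} ($\gamma=2$) and Proposition~\ref{propQP} ($\gamma=1$). Positivity, symmetry and local $L^p$ membership in~\ref{P1} are immediate: positivity and symmetry pass through the positive subordination~\eqref{kernelFH} from $h_s^\zeta$, while Proposition~\ref{P kernel estimates} shows that $P_t^{\alpha,\zeta}$ is bounded on every ball, hence in $L^p_{\mathrm{loc}}$ for all $p<\infty$. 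For~\ref{P2}, Tonelli's theorem together with $\int_G h_s^\zeta\,\varphi_0\,\dd x=\mathcal{H}(h_s^\zeta)(0)=\e^{-\zeta^2 s}$ gives $\int_G P_t^{\alpha,\zeta}\,\varphi_0\,\dd x=\int_0^{+\infty}\e^{-\zeta^2 s}\,\eta_t^\alpha(s)\,\dd s=\e^{-t\zeta^\alpha}<\infty$; the same computation with $\varphi_{-\lambda}$ in place of $\varphi_0$, using $|\varphi_{-\lambda}|\lesssim\varphi_0$ to justify Fubini, yields $m_t(\lambda)=\mathcal{H}(P_t^{\alpha,\zeta})(\lambda)=\int_0^{+\infty}\e^{-s(\|\lambda\|^2+\zeta^2)}\,\eta_t^\alpha(s)\,\dd s=\e^{-t(\|\lambda\|^2+\zeta^2)^{\alpha/2}}$, which lies in $(0,1]$ uniformly in $\lambda,t$ and tends to $1$ as $t\to 0^+$.

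For~\ref{P3} we take $\gamma=\alpha$ and $d_\alpha=1$. Given $R,C>0$, since the parameter $\kappa$ in Proposition~\ref{P kernel estimates} is arbitrary we may choose $\kappa>R+CR^{1/\alpha}$; then for $0<t<R$ and $|x|\le Ct^{1/\alpha}$ one is in the regime $t+|x|\le\kappa$, so $P_t^{\alpha,\zeta}(x)\asymp_{R,C}t\,(t^{1/\alpha}+|x|)^{-(n+\alpha)}\asymp_C t^{-n/\alpha}$, which is~\eqref{eqP3-1}. For~\eqref{eqP3-2} we repeat the dyadic-annulus argument of Proposition~\ref{propQP}: with $k_0$ such that $2^{k_0}t^{1/\alpha}\le R^{1/\alpha}<2^{k_0+1}t^{1/\alpha}$, we bound $\mathbf{1}_{B(e,R^{1/\alpha})}P_t^{\alpha,\zeta}$ by $\mathbf{1}_{B(e,t^{1/\alpha})}P_t^{\alpha,\zeta}$ plus the sum over $0\le k\le k_0$ of $\mathbf{1}_{B(e,2^{k+1}t^{1/\alpha})\setminus B(e,2^{k}t^{1/\alpha})}P_t^{\alpha,\zeta}$; on each of these sets all distances are $\le 2R^{1/\alpha}$, so (after choosing $\kappa$ large in terms of $R$) the Euclidean regime of Proposition~\ref{P kernel estimates} applies and gives, on the $k$-th annulus, $P_t^{\alpha,\zeta}\lesssim_R 2^{-k\alpha}\,\mu(B(e,2^{k+1}t^{1/\alpha}))^{-1}\mathbf{1}_{B(e,2^{k+1}t^{1/\alpha})}$, and likewise $\mathbf{1}_{B(e,t^{1/\alpha})}P_t^{\alpha,\zeta}\lesssim_R\mu(B(e,t^{1/\alpha}))^{-1}\mathbf{1}_{B(e,t^{1/\alpha})}$; summing the convergent geometric series $\sum_k 2^{-k\alpha}$ yields $\sup_{0<t<R}f\ast(\mathbf{1}_{B(e,R^{1/\alpha})}P_t^{\alpha,\zeta})\lesssim\mathcal{M}_R f$. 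Property~\ref{P5} is handled just as for $(h_t^\zeta)$ and $(Q_t^{\sigma,\zeta})$: one forms the ratio $P_t^{\alpha,\zeta}(x)/P_R^{\alpha,\zeta}(x)$ from Proposition~\ref{P kernel estimates}, choosing $\kappa$ large enough in terms of $R$ that $t+|x|>\kappa$ forces $|x|>t^{2/\alpha}$ (resp.\ $|x|>t^{1/\alpha}$ when $\zeta=0$), so that both kernels always lie in a regime covered by the proposition; the factors $\varphi_0(x)$ and $\e^{-\zeta|x|}$ then cancel, and $t\le R$ together with $|x|\ge a^{1/\alpha}$ (for~\eqref{eqP5-1}), resp.\ $|x|\ge R^{1/\alpha}$ (for~\eqref{eqP5-2}), makes the remaining power of $\frac{R^{1/\alpha}+|x|}{t^{1/\alpha}+|x|}$, resp.\ $\frac{R+|x|}{t+|x|}$, bounded by a constant depending only on $R,a$, resp.\ $R$.

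\textbf{Main obstacle.} The step I expect to be the main obstacle is~\ref{P4}, because Proposition~\ref{P kernel estimates} provides no pointwise description of $P_t^{\alpha,\zeta}(x)$ in the ``intermediate'' region $t+|x|\ge\kappa$, $|x|<t^{2/\alpha}$ (respectively $|x|<t^{1/\alpha}$ when $\zeta=0$) --- a region with no counterpart in the heat-kernel case, where the bilateral estimate~\eqref{S2heatkernel} is global, nor in the fractional-Poisson case, where Proposition~\ref{Q kernel estimates} covers all $x$. The way around this is to exploit that the constant $C(x_0,x,t)$ in~\eqref{eqP4} may depend on $t$: taking $c_\alpha=1$ as in Proposition~\ref{propQP}, we fix $t$ (and $x,x_0$) and choose the threshold $\kappa=\kappa(t)$ large enough that whenever $|z|$ lies below the relevant threshold ($t^{2/\alpha}$, or $t^{1/\alpha}$ if $\zeta=0$) one automatically has $t+|z|\le\kappa(t)$; then for every $z\in G$ either $z$ is in the Euclidean regime $t+|z|\le\kappa(t)$, or $t+|z|>\kappa(t)$, which forces $z$ into the tail regime, so that Proposition~\ref{P kernel estimates} describes both $P_t^{\alpha,\zeta}(y^{-1}x)$ and $P_t^{\alpha,\zeta}(y^{-1}x_0)$ for all $y$. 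One then runs the four-case analysis of Proposition~\ref{propQP}, according to the regimes in which $y^{-1}x$ and $y^{-1}x_0$ fall, estimating the quotient by the elementary inequality $\frac{t^{r}+|y^{-1}x_0|}{t^{r}+|y^{-1}x|}\le 1+|x^{-1}x_0|\,t^{-r}$ from~\eqref{ineqP}, the comparison $\varphi_0(y^{-1}x)/\varphi_0(y^{-1}x_0)\le\e^{\|\rho\|(|x|+|x_0|)}$ from~\eqref{phi compared}, the bound $\varphi_0\lesssim 1$ from~\eqref{phi_0 upper}, and the triangle inequality $|y^{-1}x_0|-|y^{-1}x|\le|x^{-1}x_0|$ to control the ratio of the factors $\e^{-\zeta|y^{-1}x|}$ and $\e^{-\zeta|y^{-1}x_0|}$ when $\zeta>0$; in the ``mixed'' cases, where one of the two points lies in the Euclidean regime, that point is confined to a ball of radius $\kappa(t)$ and all quantities involved are bounded, above and below, by constants depending only on $t,x,x_0$. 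All constants arising depend only on $x_0,x,t$, as required, and the remaining verifications are routine adaptations of the corresponding steps for $(h_t^\zeta)$ and $(Q_t^{\sigma,\zeta})$.
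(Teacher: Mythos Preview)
Your proposal is correct and follows essentially the same approach as the paper: the verification of \ref{P1}--\ref{P5} via the subordination formula and Proposition~\ref{P kernel estimates}, the dyadic-annulus argument for \eqref{eqP3-2} with $d_\alpha=1$, and the four-case analysis for \ref{P4} with $c_\alpha=1$ all match the paper's proof. Your ``main obstacle''---that Proposition~\ref{P kernel estimates} leaves a gap in the region $t+|x|\ge\kappa$, $|x|<t^{2/\alpha}$---is resolved exactly as the paper does (implicitly): since the constant in \eqref{eqP4} may depend on $t$, one takes $\kappa=t+t^{2/\alpha}$, so that $|z|<t^{2/\alpha}$ is precisely the Euclidean regime and $|z|\ge t^{2/\alpha}$ the tail regime; the paper's four cases ($|y^{-1}x|\gtrless t^{2/\alpha}$, $|y^{-1}x_0|\gtrless t^{2/\alpha}$) are exactly your four cases. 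One small slip: in your sketch of \eqref{eqP5-1} you write ``$|x|\ge a^{1/\alpha}$'', but \eqref{eqP5-1} must hold for \emph{all} $x$; the correct bound there comes instead from $t\ge a$, which gives $\frac{R^{1/\alpha}+|x|}{t^{1/\alpha}+|x|}\le (R/a)^{1/\alpha}$ uniformly in $x$.
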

	\begin{proof}
		Properties in~\ref{P1} follow from the corresponding properties of the heat kernel  and the subordination formula~\eqref{kernelFH}; for~\ref{P2}, it suffices to recall the well-known fact that  $\mathcal{H}({P_t^{\alpha, \zeta}})(\lambda) =\e^{-t(\|\lambda\|^2+\zeta^2)^{\alpha/2}}$.
		
		As for~\ref{P3},~\eqref{eqP3-1} with $\gamma=\alpha$ follow immediately from the euclidean estimates of the kernel $P_t^{\alpha, \zeta}$ of Proposition~\ref{P kernel estimates}.
		
		We now prove~\eqref{eqP3-2} for  $d_{\alpha}=1$. 	
		If $k_{0} \in \N\cup\{0\}$ is such that $2^{k_{0}}t \leq R<2^{k_0+1}t$, then
		\begin{align*}
			\mathbf{1}_{B(e, R^{1/\alpha})}\,P_{t}^{\alpha, \zeta}
			&\leq  \mathbf{1}_{B(e, t^{1/\alpha})}\,P_{t}^{\alpha, \zeta} + \sum_{k=0}^{k_{0}} \mathbf{1}_{B(e,(2^{k+1}t)^{1/\alpha})\setminus B(e,(2^{k}t)^{1/\alpha})}\, P_{t}^{\alpha, \zeta}.
		\end{align*}
		Using the pointwise estimates for $P_t^{\alpha, \zeta}$ of Proposition~\ref{P kernel estimates}, we get on the one hand 
		\begin{align*}
			\mathbf{1}_{B(e, t^{1/\alpha})} \,P_t^{\alpha, \zeta}
			& \lesssim \mu(B(e,t^{1/\alpha}))^{-1} \,  \mathbf{1}_{ B(e,t^{1/\alpha})}
		\end{align*}
		while on the other hand 
		\begin{align*}\mathbf{1}_{B(e,(2^{k+1}t)^{1/\alpha})\setminus B(e,(2^{k}t)^{1/\alpha})}\, P_t^{\alpha, \zeta}
			&\lesssim  \,  t \,(t^{1/\alpha}+2^{k/\alpha}t^{1/\alpha})^{-(n+\alpha)} \mathbf{1}_{B(e,(2^{k+1}t)^{1/\alpha})} \\
			&\lesssim M_k \, \mu(B(e,(2^{k+1}t)^{1/\alpha}))^{-1}\,\mathbf{1}_{B(e,(2^{k+1}t)^{1/\alpha})},
		\end{align*}
		where $M_k=2^{- k}$. Therefore, summing over all $k\geq 0$, we get
		\begin{equation*}
			\sup_{0<t<R}(f* \mathbf{1}_{B(e, R^{1/\alpha})}\,P_{t}^{\alpha,\zeta}) \lesssim \mathcal{M}_{R}f.
		\end{equation*}

		We now show property~\ref{P4} by showing~\eqref{eqP4} with $c_{\alpha}=1$. The case $\zeta=0$ is again similar and simpler by Proposition~\ref{P kernel estimates}, whence omitted. 
		
		Fix $t>0$ and pick $x, y, x_0\in G$. We distinguish the following cases, arising from Proposition~\ref{P kernel estimates}.
		
		Suppose \textit{$|y^{-1}x|<t^{2/\alpha}$, $|y^{-1}x_0|<t^{2/\alpha}$.} Then, by Proposition 
		\ref{P kernel estimates} and~\eqref{ineqP},
		$$\frac{P_{t}^{\alpha, \zeta}(y^{-1}x)}{P_{t}^{\alpha, \zeta}(y^{-1}x_0)}\asymp\left(\frac{t^{1/\alpha}+|y^{-1}x_0|}{t^{1/\alpha}+ |y^{-1}x|}\right)^{n+\alpha}\lesssim \left( 1+\frac{d(xK,x_0K)}{t^{1/\alpha}}\right)^{n+\alpha}.$$
		
		Suppose  \textit{$ |y^{-1}x|\geq t^{2/\alpha}$, $|y^{-1}x_0|\geq t^{2/\alpha}$.} Then, by  Proposition 
		\ref{P kernel estimates} we have
		\begin{align*}\frac{P_{t}^{\alpha, \zeta}(y^{-1}x)}{P_{t}^{\alpha, \zeta}(y^{-1}x_0)}&\asymp\left(\frac{t+|y^{-1}x_0|}{t+|y^{-1}x|}\right)^{\frac{\ell}{2}+\frac{1}{2}+\frac{\alpha}{2}+|\Sigma_r^+|}\frac{\varphi_{0}(y^{-1}x)}{\varphi_{0}(y^{-1}x_0)}\e^{-\zeta(|y^{-1}x|-|y^{-1}x_0|)}
		\end{align*}
		Then the claim follows using~\eqref{ineqP},~\eqref{phi compared} and~\eqref{facts}.		
		
		Suppose  \textit{$ |y^{-1}x|\geq t^{2/\alpha}$, $|y^{-1}x_0|< t^{2/\alpha}$.} Then by Proposition 
		\ref{P kernel estimates} we get
		\begin{align*}
			\frac{P_{t}^{\alpha, \zeta}(y^{-1}x)}{P_{t}^{\alpha, \zeta}(y^{-1}x_0)}&\asymp\frac{(t^{1/\alpha}+|y^{-1}x_0|)^{n+\alpha}}{(t+|y^{-1}x|)^{\frac{\ell}{2}+\frac{1}{2}+\frac{\alpha}{2}+|\Sigma_r^+|}}\,\varphi_0(y^{-1}x)\,\e^{-\zeta|y^{-1}x|}\\
			&\lesssim t^{-\frac{\ell}{2}-\frac{1}{2}-\frac{\alpha}{2}-|\Sigma_r^+|}(t^{1/\alpha}+t^{2/\alpha})^{n+\alpha},
		\end{align*}
		where we used that $\varphi_0(y^{-1}x) \lesssim 1$ by~\eqref{phi_0 upper}.
		
		Suppose $ |y^{-1}x|<t^{2/\alpha}$, $|y^{-1}x_0|\geq t^{2/\alpha}$.  Then
		\begin{align*}
			\frac{P_{t}^{\alpha, \zeta}(y^{-1}x)}{P_{t}^{\alpha, \zeta}(y^{-1}x_0)}&\asymp\frac{(t+|y^{-1}x_0|)^{\frac{\ell}{2}+\frac{1}{2}+\frac{\alpha}{2}+|\Sigma_r^+|}}{(t^{1/\alpha}+|y^{-1}x|)^{n+\alpha}}\, \varphi_{0}(y^{-1}x_0)^{-1}\,\e^{\zeta|y^{-1}x_0|}.
		\end{align*}
		Thus,  by the triangle inequality we get (write $\beta=\frac{\ell}{2}+\frac{1}{2}+\frac{\alpha}{2}+|\Sigma_r^+| $)
		\begin{align*}	
			\frac{(t+|y^{-1}x_0|)^\beta}{(t^{1/\alpha}+|y^{-1}x|)^{n+\alpha}}
			&=\frac{(t+|y^{-1}x_0|)^{\beta}}{(t+|y^{-1}x|)^\beta}\,
			\frac{(t+|y^{-1}x|)^{\beta}}{(t^{1/\alpha}+|y^{-1}x|)^{n+\alpha}} \\
			&\lesssim \left(1+\frac{|x^{-1}x_0|}{t}\right)^{\beta} t^{-\frac{n}{\alpha}-1} (t+t^{2/\alpha})^{\beta}.
		\end{align*}
		Next, using that
		\[
		|y^{-1}x_0|\leq |x_{0}^{-1}x|+ |y^{-1}x|\leq  |x^{-1}x_0|+t^{2/\alpha},
		\]
		using the lower bound in~\eqref{phi_0 upper} we get
		\[
		\varphi_{0}(y^{-1}x_0)^{-1}\lesssim \e^{\|\rho\||y^{-1}x_0|}\lesssim \e^{\|\rho\|( |x^{-1}x_0|+t^{2/\alpha})}, \qquad \e^{\zeta |y^{-1}x_0|} \leq \e^{\zeta( |x^{-1}x_0|+t^{2/\alpha})},
		\]
		which altogether prove~\eqref{eqP4} and conclude the proof of~\ref{P4}. 
		
		Finally, we prove~\ref{P5} by using repeatedly Proposition~\ref{P kernel estimates}. Suppose that $0<t<R$ and $|x|\geq R^{1/\alpha}$. If also $|x|\leq t^{2/\alpha}$, then $R^{1/\alpha} \leq |x|\leq R^{2/\alpha}$ (provided this interval is not empty) and $t+|x|$ is bounded; therefore
		\[
		\frac{P_t^{\alpha,\zeta}(x)}{P_R^{\alpha,\zeta}(x)}\asymp \frac{t}{R} \frac{(R^{1/\alpha}+|x|)^{n+\alpha}}{(t^{1/\alpha}+|x|)^{n+\alpha}}\lesssim 1,
		\]
		while if $|x|\geq t^{2/\alpha}$, in the case when $R^{1/\alpha} \leq |x|\leq R^{2/\alpha}$  we argue as before, while if $|x|\geq R^{2/\alpha}$ then 
		\[
		\frac{P_t^{\alpha,\zeta}(x)}{P_R^{\alpha,\zeta}(x)}\asymp \frac{t}{R} \frac{(R+|x|)^{\frac{\ell}{2}+\frac{1}{2}+\frac{\alpha}{2}+|\Sigma_r^+|}}{(t+|x|)^{\frac{\ell}{2}+\frac{1}{2}+\frac{\alpha}{2}+|\Sigma_r^+|}}\lesssim 1,
		\]
		which prove~\eqref{eqP5-2} with $d_{\alpha}=1$. Finally, it remains to consider
		$|x|\leq R^{1/\alpha}$ and $a<t<R$ to show~\eqref{eqP5-1}.  Then 
		\[
		\frac{P_t^{\alpha,\zeta}(x)}{P_R^{\alpha,\zeta}(x)}\asymp \frac{t}{R} \frac{(R^{1/\alpha}+|x|)^{n+\alpha}}{(t^{1/\alpha}+|x|)^{n+\alpha}}\lesssim \left(\frac{R^{1/\alpha}}{a^{1/\alpha}}\right)^{n+\alpha},
		\]
		and this finishes the proof.
	\end{proof}
	
	\subsection{Classes of weights} We conclude this section by introducing some classes of weights and proving some related results. We recall that a \emph{weight} is a strictly positive and locally integrable function on $\mathbb{X}$. 
	
	\begin{definition}
		Suppose $(\psi_t) \in \mathcal{P}_{\gamma}$ for some $\gamma>0$ and $1\leq p<\infty$, and let $v$ be a weight. We say that $v$ belongs to the class $\mathcal{D}_{p}(\psi_{t})$, and write $v\in \mathcal{D}_{p} (\psi_{t})$, if there exists $t_0> 0$ such that $\psi_{t_{0}}v^{-\frac{1}{p}} \in L^{p'}$, and that $v$ belongs to the class $\mathcal{D}_{p}^{\mathrm{loc}}$ ($v \in \mathcal{D}_{p}^{ \mathrm{loc}}$) if $v^{-\frac{1}{p}} \in L^{p'}_{\mathrm{loc}}$.
	\end{definition}

	\begin{lemma}\label{lemmaequiv}
		Suppose $(\psi_t) \in \mathcal{P}_{\gamma}$ for some $\gamma>0$ and $1\leq p<\infty$, and let $v$ be a weight.  Then, $v\in \mathcal{D}_{p}(\psi_{t})$ if and only if there exists $t_1>0$ such that $\psi_{t_{1}}(x^{-1} \cdot \, ) v^{-\frac{1}{p}}  \in L^{p'}$ for all $x\in G$.
	\end{lemma}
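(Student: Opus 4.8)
The plan is to prove the two implications separately; the forward one is immediate, and the reverse one rests entirely on property~\ref{P4}.

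\emph{Easy direction.} If there exists $t_1>0$ such that $\psi_{t_1}(x^{-1}\,\cdot\,)\,v^{-1/p}\in L^{p'}$ for every $x\in G$, then specializing to $x=e$ gives $\psi_{t_1}\,v^{-1/p}\in L^{p'}$, so $v\in\mathcal{D}_{p}(\psi_{t})$ with $t_0=t_1$. Thus only the converse carries content.

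\emph{Reverse direction.} Suppose $v\in\mathcal{D}_{p}(\psi_{t})$, i.e.\ $\psi_{t_0}\,v^{-1/p}\in L^{p'}$ for some $t_0>0$, and let $c_{\gamma}>0$ be the constant furnished by~\ref{P4}. I would set $t_1:=t_0/c_{\gamma}>0$ and apply~\eqref{eqP4} with $x_0=e$ and $t=t_1$: for each fixed $x\in G$ there is a finite constant $C(x):=C(e,x,t_1)$, \emph{independent of $y$}, such that $\psi_{t_1}(y^{-1}x)\le C(x)\,\psi_{c_{\gamma}t_1}(y^{-1})$ for all $y\in G$. Using the symmetry of $\psi$ from~\ref{P1} together with $c_{\gamma}t_1=t_0$, this rewrites as $\psi_{t_1}(c_{\gamma}^{-1}\cdot)$... more precisely $\psi_{c_{\gamma}t_1}(y^{-1})=\psi_{t_0}(y)$ and $\psi_{t_1}(y^{-1}x)=\psi_{t_1}(x^{-1}y)$, so
\[
\psi_{t_1}(x^{-1}\,\cdot\,)\le C(x)\,\psi_{t_0}\qquad\text{pointwise on }\mathbb{X}.
\]
Raising to the power $p'$, multiplying by $v^{-p'/p}$ (by $v^{-1}$ if $p=1$) and integrating against $\mu$ then yields $\|\psi_{t_1}(x^{-1}\,\cdot\,)v^{-1/p}\|_{p'}\le C(x)\,\|\psi_{t_0}v^{-1/p}\|_{p'}<\infty$ for every $x\in G$, which is exactly the asserted conclusion (with the uniform choice $t_1=t_0/c_{\gamma}$).

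\emph{Where the difficulty lies.} There is no genuine obstacle: the whole point — and the reason condition~\ref{P4} was built into the definition of $\mathcal{P}_{\gamma}$ — is that the comparison constant in~\eqref{eqP4} does not depend on the integration variable $y$, so it may be pulled out of the integral, while matching the time parameters via $c_{\gamma}t_1=t_0$ is automatic. One also uses tacitly that $\psi_{t_1}(x^{-1}\,\cdot\,)$ is a well-defined measurable function on $\mathbb{X}$, which follows from $\psi_{t_1}\in L^1_{\mathrm{loc}}$ in~\ref{P1} and the $G$-invariance of $\mu$. Finally, the symmetry hypothesis in~\ref{P1} is used only to turn $\psi_{c_{\gamma}t_1}(y^{-1})$ and $\psi_{t_1}(y^{-1}x)$ into $\psi_{t_0}(y)$ and $\psi_{t_1}(x^{-1}y)$; if one preferred to drop it, an analogous asymmetric version of~\ref{P4} would give the same result with the obvious modifications.
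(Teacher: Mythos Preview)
Your proof is correct and follows essentially the same route as the paper: both directions are handled identically, with the substantive implication obtained by applying~\ref{P4} with $x_0=e$ and $t_1=t_0/c_\gamma$ to get $\psi_{t_1}(x^{-1}\,\cdot\,)\le C(x)\,\psi_{t_0}$. Your version is slightly more explicit in invoking the symmetry from~\ref{P1} to pass from $\psi_{t_0}(y^{-1})$ to $\psi_{t_0}(y)$, but this is the only difference.
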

	
	\begin{proof}
		The ``if'' implication is obvious by taking $t_{0}=t_{1}$ and $x=e$.  As for the other implication, recall that by property~\ref{P4} of $(\psi_{t})$ with $t=t_{1} = \frac{t_{0}}{c_{\gamma}}$ and $x_{0}=e$, we have 
		\[
		\psi_{t_{1}}(x^{-1} \cdot \, )v^{-\frac{1}{p}} \leq C(x, t_0)\, \psi_{t_{0}}v^{-\frac{1}{p}},
		\]
		and this completes the proof.
	\end{proof}

	\begin{corollary}\label{Cor Dploc}
		Suppose $1\leq p<\infty$, $\gamma>0$ and $(\psi_t) \in \mathcal{P}_{\gamma}$. Then $\mathcal{D}_{p}(\psi_{t})\subseteq \mathcal{D}^{p}_{\mathrm{loc}}$.
	\end{corollary}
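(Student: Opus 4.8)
The plan is to extract from property~\ref{P3} the fact that $\psi_{t_0}$ is bounded below by a strictly positive constant on every ball, and then simply divide. First I would unwind the hypothesis: if $v\in\mathcal{D}_{p}(\psi_{t})$ then there is some $t_0>0$ with $\psi_{t_0}v^{-1/p}\in L^{p'}$, while the goal $v\in\mathcal{D}_{p}^{\mathrm{loc}}$ means exactly $v^{-1/p}\in L^{p'}_{\mathrm{loc}}$; since every compact subset of $\mathbb{X}$ lies in a ball, it is enough to prove that $\mathbf{1}_{B(e,r)}\,v^{-1/p}\in L^{p'}$ for each fixed $r>0$.

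Next I would invoke the two-sided estimate~\eqref{eqP3-1} at time $t=t_0$, choosing $R=2t_0$ and $C=r\,t_0^{-1/\gamma}$, so that the condition $|x|\le C t^{1/\gamma}$ becomes $|x|\le r$. This produces a constant $c_r=c_r(r,t_0,\gamma)>0$ with $\psi_{t_0}(x)\ge c_r$ for all $x\in B(e,r)$; here only the lower half of~\eqref{eqP3-1} is used, together with the positivity of $\psi_{t_0}$ from~\ref{P1}. Consequently, pointwise on $B(e,r)$,
\[
v^{-1/p}\le c_r^{-1}\,\psi_{t_0}\,v^{-1/p},
\]
so that $\|\mathbf{1}_{B(e,r)}v^{-1/p}\|_{p'}\le c_r^{-1}\,\|\psi_{t_0}v^{-1/p}\|_{p'}<\infty$; the case $p=1$ (hence $p'=\infty$) is identical and gives local essential boundedness of $v^{-1}$. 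Since $r>0$ is arbitrary, this yields $v^{-1/p}\in L^{p'}_{\mathrm{loc}}$, i.e.\ $v\in\mathcal{D}_{p}^{\mathrm{loc}}$.

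I do not expect a genuine obstacle here: the only point to keep track of is that the implied constant in~\eqref{eqP3-1} degenerates as $r\to\infty$ (through its dependence on $C$), which is precisely why the conclusion is only local — globally the inclusion fails in general. It is also worth emphasizing that, in contrast with the other results of this section, this argument uses neither the local doubling of $\mu$ nor the maximal inequalities, but only the pointwise lower bound built into~\ref{P3}.
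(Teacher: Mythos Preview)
Your proof is correct, and in fact it is more direct than the paper's argument. The paper proceeds by first invoking Lemma~\ref{lemmaequiv} (hence property~\ref{P4}) to know that $\psi_{t_1}(x^{-1}\,\cdot\,)v^{-1/p}\in L^{p'}$ for \emph{every} center $x$, then covers $B(o,r)$ by finitely many balls of the fixed radius $t_1^{1/\gamma}$, and applies the lower bound in~\eqref{eqP3-1} with $C=1$ on each of these small balls. You instead exploit the freedom in the parameter $C$ of~\ref{P3}: taking $C=r\,t_0^{-1/\gamma}$ gives the lower bound on all of $B(e,r)$ in one stroke, so neither the covering argument nor Lemma~\ref{lemmaequiv} (nor~\ref{P4}) is needed. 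Your route is therefore slightly more economical in the hypotheses it uses; the paper's detour through Lemma~\ref{lemmaequiv} buys nothing extra here, though that lemma is independently needed elsewhere in the paper.
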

	
	\begin{proof}
		Suppose $v\in \mathcal{D}_{p}(\psi_{t})$. It is enough to prove that for all $r>0$, the function $\mathbf{1}_{B(o,r)} v^{-\frac{1}{p}}$ belongs to $L^{p'}$. Observe that to show this, if $t_{1}>0$ is that of Lemma~\ref{lemmaequiv}, it is in turn enough to prove that $\mathbf{1}_{B(xK,t_1^{1/\gamma})} v^{-\frac{1}{p}} \in L^{p'}$ for all $x\in G$. Indeed, by compactness, every ball $B(o,r)$ can be covered by a finite number of balls $B(xK,t_1^{1/\gamma})$.
		
		But this follows since, given $x,y\in G$,
		\[
		\mathbf{1}_{B(xK,t_1^{1/\gamma})}(y) v^{-\frac{1}{p}} (y) = \mathbf{1}_{B(xK,t_1^{1/\gamma})}(y) \, t_1^{n/\gamma}\,t_1^{-n/\gamma} v^{-\frac{1}{p}} (y) \lesssim_{t_1} \psi_{t_{1}}(x^{-1}y) v^{-\frac{1}{p}} (y),
		\]
		by property~\ref{P3} and the fact that $\psi_{t_{1}}(x^{-1}\cdot ) v^{-\frac{1}{p}} \in L^{p'}$ by Lemma~\ref{lemmaequiv}.
	\end{proof}

	\section{Pointwise convergence}\label{sec:pc}
	
	In this section we consider convolution operators with right-convolution kernels from a class  $\mathcal{P}_{\gamma}$. More precisely, we consider operators
	\[
	T_tf(xK)=f\ast \psi_t(xK)=\int_{G}f(y)\psi_t(y^{-1}x)\,\diff y, \qquad x\in G, \; t>0,
	\]
	where $(\psi_t)$ is a family of functions in $\mathcal{P}_{\gamma}$ for some $\gamma>0$ which we fix all throughout. Recall that $\psi_t\in L^1_{\text{loc}}$ for all $t>0$, cf.~Remark~\ref{rem:Lploc}, whence $T_tf$ is well defined e.g.\ for $f\in C_{c}$. Our aim is to examine their behavior as $t\rightarrow 0^{+}$, for suitable classes of functions. We begin with a lemma.

	\begin{lemma}\label{lemmatest}
		If $f\in C_{c}^{\infty}$, then $T_t f(xK) \to_{t\to 0^{+}} f(xK)$ for all $x\in G$.
	\end{lemma}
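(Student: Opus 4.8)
The plan is to reduce the statement to an assertion about bi‑$K$‑invariant functions and then apply the spherical Fourier analysis, the key input being property~\ref{P2}.

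First I would rewrite the operator. Since $G$ is unimodular, the substitution $y\mapsto xz$ together with the symmetry $\psi_t(z^{-1})=\psi_t(z)$ from property~\ref{P1} gives $T_tf(xK)=\int_G f(xz)\,\psi_t(z)\,\dd z$. As $\psi_t$ is left‑$K$‑invariant, replacing $z$ by $kz$ (for $k\in K$) leaves this integral unchanged, so averaging over $K$ and applying Tonelli on $K\times G$ yields $T_tf(xK)=\int_G F_x(z)\,\psi_t(z)\,\dd z$, where $F_x(z):=\int_K f(xkz)\,\dd k$. Because $f$ is right‑$K$‑invariant on $G$, the function $F_x$ is bi‑$K$‑invariant; it is smooth since $f$ is; it is compactly supported, because if $\supp f\subseteq B(e,M)$ then $|xkz|\ge |z|-|x|$ forces $\supp F_x\subseteq B(e,M+|x|)$; and $F_x(e)=\int_K f(xk)\,\dd k=f(xK)$. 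Hence $F_x\in C_c^\infty(K\backslash G/K)\subseteq \mathcal S(K\backslash G/K)$, and in particular $\mathcal H(F_x)\in\mathcal S(\mathfrak a)^W$.

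Next I would expand $F_x$ by the inversion formula of the spherical Fourier transform,
\[
F_x(z)=C_0\int_{\mathfrak a}\mathcal H(F_x)(\lambda)\,\varphi_\lambda(z)\,|\mathbf c(\lambda)|^{-2}\,\dd\lambda ,
\]
substitute this into $\int_G F_x\,\psi_t$, and interchange the two integrations. This exchange is justified by Tonelli's theorem: $\psi_t\ge 0$ and $|\varphi_\lambda|\le\varphi_0$ everywhere, so $\int_G|\varphi_\lambda|\,\psi_t\le\int_G\varphi_0\,\psi_t<\infty$ by property~\ref{P2}, while $\mathcal H(F_x)\in\mathcal S(\mathfrak a)^W$ decays faster than any polynomial and $|\mathbf c(\lambda)|^{-2}\lesssim(1+\|\lambda\|)^{m_0}$ by~\eqref{HCest}, so $\mathcal H(F_x)\,|\mathbf c|^{-2}\in L^1(\mathfrak a)$; the relevant double integral is bounded by the product of these two finite quantities. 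After the interchange, and using $\int_G\varphi_\lambda(z)\,\psi_t(z)\,\dd z=\mathcal H(\psi_t)(-\lambda)=m_t(-\lambda)$, I obtain
\[
T_tf(xK)=C_0\int_{\mathfrak a}\mathcal H(F_x)(\lambda)\,m_t(-\lambda)\,|\mathbf c(\lambda)|^{-2}\,\dd\lambda .
\]

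Finally I would let $t\to 0^+$. By property~\ref{P2}, $m_t(-\lambda)\to 1$ for every $\lambda$ and $|m_t(-\lambda)|\le C$ uniformly in $t$ and $\lambda$, so the integrand is dominated by $C\,|\mathcal H(F_x)(\lambda)|\,|\mathbf c(\lambda)|^{-2}\in L^1(\mathfrak a)$, and dominated convergence gives
\[
\lim_{t\to 0^+}T_tf(xK)=C_0\int_{\mathfrak a}\mathcal H(F_x)(\lambda)\,|\mathbf c(\lambda)|^{-2}\,\dd\lambda=F_x(e)=f(xK),
\]
the penultimate equality being the inversion formula evaluated at $z=e$, where $\varphi_\lambda(e)=1$. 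The only genuinely delicate point is the interchange of integrals: $\psi_t$ need not belong to $L^1(G)$, so it must be controlled through the finiteness of $\int_G\varphi_0\,\psi_t$ built into property~\ref{P2}, rather than through an $L^1$ bound.
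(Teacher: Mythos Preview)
Your proof is correct and proceeds along a genuinely different route from the paper's. The paper applies the Helgason--Fourier transform directly to $f$, invokes the multiplication rule $\widehat{(f*\psi_t)}(\lambda,kM)=\widehat{f}(\lambda,kM)\,m_t(\lambda)$ and the inversion formula on $\mathbb X$, appeals to the Paley--Wiener theorem for the decay of $\widehat{f}$, bounds $|(f*\psi_t-f)(xK)|$ by an absolutely convergent integral over $K\times\mathfrak a$ involving $|1-m_t(\lambda)|$, and concludes by dominated convergence. You instead reduce to the bi-$K$-invariant setting by averaging over $K$ to produce $F_x\in C_c^\infty(K\backslash G/K)$, and then work entirely with the spherical transform already laid out in Section~\ref{sec:SS}. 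Your approach has the advantage of staying within the tools the paper sets up in its preliminaries (no Helgason--Fourier transform, no separate Paley--Wiener citation), and your Fubini justification makes transparent exactly where the hypothesis $\int_G\varphi_0\,\psi_t<\infty$ from~\ref{P2} enters; the paper's argument is slightly shorter once the Helgason--Fourier machinery is granted, and avoids the averaging reduction.
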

	\begin{proof}
		Since $f\in C_{c}^{\infty}$, its Helgason--Fourier transform~\cite[Ch.~III, \S 1]{Hel1994},
		\[
		{\widehat{f}}(\lambda,kM)=\,\int_{G}\,f(xK)\,
		\e^{\langle -i\lambda+\rho, A(k^{-1}x)\rangle }\,\diff{x}, \quad \lambda \in \mathfrak{a}, \; k\in K,
		\]
		is well defined, $M$ being the centralizer of $\exp \mathfrak{a}$ in $K$.  Recall also that $f$ can be recovered by the inversion formula~\cite[Ch.~III, Theorem~1.3]{Hel1994}
		\begin{align}\label{H-Ftr-inv}
			f(xK)=\frac{1}{|W|}\int_{\mathfrak{a}}\int_K \e^{\langle i\lambda+\rho, A(k^{-1}x)\rangle } {\widehat{f}}(\lambda,kM)\,\diff k\,\frac{\diff\lambda}{|\textbf{c}(\lambda)|^2}.
		\end{align}
		By the Paley--Wiener theorem~\cite[Ch.~III, Theorem 5.1]{Hel1994}, for all $N\in \N$ 
		\begin{equation}\label{PW}
			| {\widehat{f}}(\lambda,kM)|\lesssim_{N}(1+\|\lambda\|)^{-N},  \qquad \lambda \in \mathfrak{a}, \; k\in K.
		\end{equation}
		Since now {$\psi_t$ is bi-$K$-invariant, we have}
		\[
		{\widehat{(f\ast\psi_t)}(\lambda, kM)} = {\widehat{f}}(\lambda, kM) \,  m_t(\lambda), 
		\]
		cf.~\cite[Ch.~III, Lemma 1.4]{Hel1994}, whence by~\eqref{H-Ftr-inv} and~\eqref{S2 Distance}
		\begin{align*}
			|(f\ast\psi_t-f)(xK)|\lesssim \e^{\|\rho\||x|}\int_{\mathfrak{a}}\int_{K}| \widehat{f} (\lambda,kM)|\,|1-m_t(\lambda)|\,\diff k\,\frac{\diff\lambda}{|\textbf{c}(\lambda)|^2}.
		\end{align*}
		It remains to observe that, by~\eqref{HCest},~\eqref{PW} and~\ref{P2}, for $N\in \N$
		\[
		|{\widehat{f}}(\lambda,kM)|\,| 1- m_t(\lambda)|\,|\textbf{c}(\lambda)|^{-2}\lesssim_{N} (1+\|\lambda\|)^{-N}(1+\|\lambda\|)^{m_{0}},
		\]
		uniformly in $t>0$, and that the right hand side is integrable on $K\times \mathfrak{a}$ if $N$ is sufficiently large. Then the conclusion follows by~\ref{P2} and dominated convergence.
	\end{proof}

	\begin{proposition}\label{propequiv}
		Suppose $1\leq p <\infty$ and let $v$ be a weight. The following statements are equivalent.
		\begin{itemize}
			\item[{(a)}] There exists $t_{0}>0$ and a weight $u$ such that $T_{t_0}$ is bounded from $L^p(v)$ to $L^{p}(u)$ if $p > 1$, or from $L^1(v)$ to $L^{1,\infty}(u)$ if $p=1$.
			\item[{(b)}]  There exists $t_{0}>0$ and a weight $u$ such that $T_{t_0}$ is bounded from $L^p(v)$ to $L^{p,\infty}(u)$.
			\item[{(c)}] There exists $t_{0}>0$ such that $T_{t_0}f$ is finite $\mu$-a.e.\ for all $f\in L^{p}(v)$.
			\item[{(d)}] $v\in \mathcal{D}_{p}(\psi_{t})$.
		\end{itemize}
	\end{proposition}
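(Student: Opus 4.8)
The plan is to establish the four statements equivalent by running the cycle $\mathrm{(a)}\Rightarrow\mathrm{(b)}\Rightarrow\mathrm{(c)}\Rightarrow\mathrm{(d)}\Rightarrow\mathrm{(a)}$, along the lines of (but more self-contained than) the arguments for the analogous results quoted in the introduction. The first two implications are formal: $\mathrm{(a)}\Rightarrow\mathrm{(b)}$ holds because $L^{p}(u)\hookrightarrow L^{p,\infty}(u)$ by Chebyshev's inequality (and for $p=1$ the two statements coincide), and $\mathrm{(b)}\Rightarrow\mathrm{(c)}$ holds because, for $f\in L^{p}(v)$, boundedness of $T_{t_{0}}$ into $L^{p,\infty}(u)$ forces the $[0,+\infty]$-valued function $T_{t_{0}}|f|$ to have finite $L^{p,\infty}(u)$ quasinorm, hence to be finite $u\,\dd\mu$-a.e.; since $u$ is a weight this is the same as finite $\mu$-a.e., and $|T_{t_{0}}f|\le T_{t_{0}}|f|$.

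For $\mathrm{(c)}\Rightarrow\mathrm{(d)}$ I would argue by contraposition. Suppose $v\notin\mathcal{D}_{p}(\psi_{t})$, i.e.\ $\psi_{t}v^{-1/p}\notin L^{p'}$ for every $t>0$; fix $t_{0}>0$ and set $t_{1}=t_{0}/c_{\gamma}$, with $c_{\gamma}$ the constant of~\ref{P4}. Since $\psi_{t_{1}}v^{-1/p}\notin L^{p'}$, a standard duality fact (for $0\le g\notin L^{p'}$ there is $0\le h\in L^{p}$ with $\int gh\,\dd\mu=+\infty$: build $h$ from the super-level sets of $g$ if $p=1$, by a summing argument if $p>1$) gives $0\le h\in L^{p}$ with $\int h\,\psi_{t_{1}}v^{-1/p}\,\dd\mu=+\infty$. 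Put $f:=h\,v^{-1/p}$: then $\|f\|_{L^{p}(v)}^{p}=\|h\|_{p}^{p}<\infty$, so $f\in L^{p}(v)$, while $\int_{G}f(y)\,\psi_{t_{1}}(y)\,\dd y=+\infty$. Now apply~\ref{P4}, in the form~\eqref{eqP4}, with $x=e$ and arbitrary $x_{0}=z\in G$, and use the symmetry of $\psi_{t_{1}}$ from~\ref{P1}:
\[
\psi_{t_{1}}(y)=\psi_{t_{1}}(y^{-1})\le C(z,e,t_{1})\,\psi_{c_{\gamma}t_{1}}(y^{-1}z)=C(z,e,t_{1})\,\psi_{t_{0}}(y^{-1}z),\qquad y\in G,
\]
where $C(z,e,t_{1})<+\infty$. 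Hence
\[
T_{t_{0}}f(z)=\int_{G}f(y)\,\psi_{t_{0}}(y^{-1}z)\,\dd y\ \ge\ C(z,e,t_{1})^{-1}\int_{G}f(y)\,\psi_{t_{1}}(y)\,\dd y=+\infty
\]
for every $z\in G$, so $T_{t_{0}}f\equiv+\infty$, contradicting~$\mathrm{(c)}$.

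For $\mathrm{(d)}\Rightarrow\mathrm{(a)}$, assume $v\in\mathcal{D}_{p}(\psi_{t})$. By Lemma~\ref{lemmaequiv} there is $t_{1}>0$ such that $\psi_{t_{1}}(x^{-1}\,\cdot\,)v^{-1/p}\in L^{p'}$ for all $x\in G$; by symmetry of $\psi_{t_{1}}$ this says $y\mapsto\psi_{t_{1}}(y^{-1}x)\,v(y)^{-1/p}$ lies in $L^{p'}$ for every $x$. By Hölder's inequality, for $0\le f\in L^{p}(v)$,
\[
T_{t_{1}}f(x)=\int_{G}f(y)\,\psi_{t_{1}}(y^{-1}x)\,\dd y\ \le\ C(x)\,\|f\|_{L^{p}(v)},
\]
where $C(x):=\big(\int_{G}\psi_{t_{1}}(y^{-1}x)^{p'}v(y)^{-p'/p}\,\dd y\big)^{1/p'}$ if $p>1$ and $C(x):=\operatorname*{ess\,sup}_{y\in G}\psi_{t_{1}}(y^{-1}x)/v(y)$ if $p=1$; in either case $C(x)<+\infty$ for every $x$, and $C$ is measurable (by Tonelli if $p>1$; if $p=1$ because an essential supremum over a $\sigma$-finite measure space of a jointly measurable function is measurable). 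Here the geometry at infinity enters: by the volume growth~\eqref{volgrowth}, $\mu(B(o,r))$ grows at most like $\e^{2\|\rho\|r}$, up to a polynomial factor, as $r\to+\infty$, so $w_{0}(x):=\e^{-3\|\rho\||x|}$ is integrable on $\mathbb{X}$, and
\[
u(x):=\frac{w_{0}(x)}{1+C(x)^{p}}
\]
is a weight (strictly positive, and $\le w_{0}$, hence locally integrable). Then
\[
\int_{\mathbb{X}}|T_{t_{1}}f|^{p}\,u\,\dd\mu\ \le\ \|f\|_{L^{p}(v)}^{p}\int_{\mathbb{X}}C(x)^{p}\,u(x)\,\dd\mu(x)\ \le\ \|f\|_{L^{p}(v)}^{p}\int_{\mathbb{X}}w_{0}\,\dd\mu\ \lesssim\ \|f\|_{L^{p}(v)}^{p},
\]
so $T_{t_{1}}$ is bounded from $L^{p}(v)$ to $L^{p}(u)$ when $p>1$, and from $L^{1}(v)$ to $L^{1}(u)\hookrightarrow L^{1,\infty}(u)$ when $p=1$; this gives~$\mathrm{(a)}$ with $t_{0}=t_{1}$ and closes the cycle.

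The step I expect to be the main obstacle is $\mathrm{(d)}\Rightarrow\mathrm{(a)}$: the Hölder bound is only pointwise, with a constant $C(x)$ that is \emph{not} uniform in $x$, and the real content is to upgrade it to honest boundedness into a single weighted space. This is possible precisely because $\mathbb{X}$ has at most exponential volume growth, leaving enough room to absorb $C(x)$ against an exponentially decaying weight. An alternative route, parallel to the treatment of maximal operators elsewhere in the paper, would split $\psi_{t_{1}}$ into a part supported near the origin—controlled by $\mathcal{M}_{R}$ via~\eqref{eqP3-2} and therefore bounded on the relevant spaces by Corollary~\ref{Cor Dploc} and Lemma~\ref{lemmamaximal}—and a tail handled by Hölder as above; but that refinement is not needed here.
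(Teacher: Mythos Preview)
Your proof is correct and follows essentially the same route as the paper: the cycle $(a)\Rightarrow(b)\Rightarrow(c)\Rightarrow(d)\Rightarrow(a)$, with $(c)\Rightarrow(d)$ via property~\ref{P4} and duality (the paper argues directly using the converse to H\"older rather than by contraposition, but it is the same idea), and $(d)\Rightarrow(a)$ via H\"older's inequality and a suitable choice of weight $u$. The only difference is that you are more explicit than the paper in constructing the weight $u$ (the paper simply asserts that one can ``choose $u$ such that $\Psi u\in L^{1}$''), and your use of the exponential volume growth to justify this is a welcome addition.
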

	
	\begin{proof}
		The implications $(a) \Rightarrow (b) \Rightarrow (c)$ are obvious. Let us now prove that $(c) \Rightarrow (d)$.
		
		We suppose $(c)$, and given a positive $f\in L^{p}(v)$ we pick $x_{0} \in G$ such that $T_{t_0}f(x_{0}K)$ is finite. By~\ref{P4}, $T_{\frac{t_{0}}{c_{\gamma}}}f(xK)$ is finite for all $x\in G$, whence the functional
		\[
		f \mapsto T_{\frac{t_{0}}{c_{\gamma}}}f(xK) = \int_{G}  \psi_{\frac{t_{0}}{c_{\gamma}}}(y^{-1}x) v^{-\frac{1}{p}}(y) f(y) v^{\frac{1}{p}}(y) \, \diff{y}
		\]
		is well defined for all $f\in  L^{p}(v)$ and $x\in G$. By duality, since $ fv^{1/p} \in L^{p}$, the function $ g(y)=\psi_{\frac{t_{0}}{c_{\gamma}}}(y^{-1} x) v^{-1/p}(y) =  \psi_{\frac{t_{0}}{c_{\gamma}}}(x^{-1}y)  v^{-1/p}(y) $ belongs to $L^{p'}$ for almost every $x\in G$, from which $(d)$ follows  by Lemma~\ref{lemmaequiv}.

		Assume now that $(d)$ holds. If $u$ is a weight and $t>0$, then by the fact that $\psi_{t}(z) = \psi_{t}(z^{-1})$ by~\ref{P1} and H\"older's inequality
		\begin{align*}
			|T_t f(xK)|  
			&\leq \int_{G}\psi_{t}(x^{-1}y) v^{-\frac{1}{p}}(y) |f(y)| v^{\frac{1}{p}}(y) \,\diff{y} \\
			& \leq  \| \psi_{t}(x^{-1}\cdot )v^{-\frac{1}{p}}\|_{p'} \|f v^{\frac{1}{p}}\|_{p} = \| \psi_{t}(x^{-1}\cdot )v^{-\frac{1}{p}}\|_{p'} \|f \|_{L^p(v)},
		\end{align*}
		whence
		\[
		\| T_t f\|_{L^{p}(u)}^{p}  \leq \|f \|_{L^p(v)}^{p} \int_{G} \| \psi_{t}(x^{-1}\cdot )v^{-\frac{1}{p}}\|_{p'}^{p}\, u(x)\, \diff{x}.
		\]
		By  Lemma~\ref{lemmaequiv}, there exists $t_{0}$ such that the function 
		\[
		x\mapsto \| \psi_{t_{0}}(x^{-1}\cdot )v^{-\frac{1}{p}}\|_{p'}^{p} =:\Psi(x)
		\]
		is finite for all $x\in G$. It is enough then to choose $u$ such that  $ \Psi u \in L^{1}$ and $(a)$ follows.
	\end{proof}
	We are now ready to prove the main result of the paper, which is an expanded version of Theorem~\ref{teointro}. For $R>0$, define the maximal operator
	\[
	\mathcal{T}^{*}_{R} f(xK) = \sup_{0<t<R} | T_tf(xK)|, \qquad x\in G.
	\]
	
	\begin{theorem}\label{teo:main}
		Suppose $v$ is a weight and $1\leq p <\infty$. The following statements are equivalent.
		\begin{enumerate}
			\item There exists $R>0$ and a weight $u$ such that $\mathcal{T}^{*}_{R}$ is bounded from $L^{p}(v)$ to $L^{p}(u)$ if $p > 1$, or from $L^{1}(v)$ to $L^{1,\infty}(u)$ if $p=1$.
			\item There exists $R>0$ and a weight $u$ such that $\mathcal{T}^{*}_{R}$ is bounded from $L^{p}(v)$ to $L^{p,\infty}(u)$.
			\item There exists $R>0$ such that $T_{R}f$ is finite $\mu$-a.e., and $\lim_{t\to 0^{+}} T_{t}f =f$  $\mu$-a.e.\ for all $f\in L^{p}(v)$.
			\item There exists $R>0$ such that $\mathcal{T}^{*}_{R}f$ is finite $\mu$-a.e.  for all $f\in L^{p}(v)$.
			\item $v\in \mathcal{D}_{p}(\psi_{t})$.
		\end{enumerate}
	\end{theorem}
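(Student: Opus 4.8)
The plan is to establish the chain $(1)\Rightarrow(2)\Rightarrow(4)\Rightarrow(5)\Rightarrow(1)$, together with $(5)\Rightarrow(3)\Rightarrow(5)$, which closes the equivalence. The implication $(1)\Rightarrow(2)$ is immediate (a bound into $L^{p}(u)$ gives the one into $L^{p,\infty}(u)$, and for $p=1$ statement $(1)$ is already the weak-type statement), and $(2)\Rightarrow(4)$ holds because a function in $L^{p,\infty}(u)$ is finite $u$-a.e., hence $\mu$-a.e., the weight $u$ being positive a.e. For $(4)\Rightarrow(5)$ and $(3)\Rightarrow(5)$: both hypotheses force the existence of some $t_{0}>0$ such that $T_{t_{0}}f$ is finite $\mu$-a.e.\ for every $f\in L^{p}(v)$ (take any $t_{0}\in(0,R)$ under $(4)$, using $|T_{t_{0}}f|\le\mathcal{T}^{*}_{R}f$, and $t_{0}=R$ under $(3)$); this is exactly condition $(c)$ of Proposition~\ref{propequiv}, whose condition $(d)$ is precisely $v\in\mathcal{D}_{p}(\psi_{t})$.

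The substance of the theorem is $(5)\Rightarrow(1)$. Assume $v\in\mathcal{D}_{p}(\psi_{t})$. By Proposition~\ref{propequiv} there are $t_{0}>0$ and a weight $u_{2}$ such that $T_{t_{0}}$ is bounded from $L^{p}(v)$ to $L^{p}(u_{2})$ (to $L^{1,\infty}(u_{2})$ if $p=1$); set $R:=t_{0}$. We split $\mathcal{T}^{*}_{R}$, acting on a nonnegative $f$, into a local and a global part according to whether the kernel is supported in $B(eK,(d_{\gamma}R)^{1/\gamma})$ or not:
\[
\mathcal{T}^{*}_{R} f \;\le\; \sup_{0<t<R} f*\bigl(\mathbf{1}_{B(eK,(d_{\gamma}R)^{1/\gamma})}\,\psi_{t}\bigr)\;+\;\sup_{0<t<R} f*\bigl(\mathbf{1}_{B(eK,(d_{\gamma}R)^{1/\gamma})^{c}}\,\psi_{t}\bigr).
\]
The first term is $\lesssim_{R}\mathcal{M}_{R}f$ by~\eqref{eqP3-2} in~\ref{P3}. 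In the second term the convolution integral runs over $\{y\colon d(xK,yK)>(d_{\gamma}R)^{1/\gamma}\}$, where~\eqref{eqP5-2} in~\ref{P5} gives $\psi_{t}\le C(R)\psi_{R}$ for all $0<t<R$, so that term is $\lesssim_{R} f*\psi_{R}=T_{R}f$. Hence $\mathcal{T}^{*}_{R}f\lesssim_{R}\mathcal{M}_{R}|f|+T_{R}|f|$ for every $f$. Now $v\in\mathcal{D}_{p}(\psi_{t})\subseteq\mathcal{D}_{p}^{\mathrm{loc}}$ by Corollary~\ref{Cor Dploc}, so Lemma~\ref{lemmamaximal} provides a weight $u_{1}$ with $\mathcal{M}_{R}$ bounded from $L^{p}(v)$ to $L^{p}(u_{1})$ (to $L^{1,\infty}(u_{1})$ if $p=1$); and $T_{R}=T_{t_{0}}$ is bounded from $L^{p}(v)$ to $L^{p}(u_{2})$ by the choice of $R$, which, since $T_{R}$ is positivity-preserving, also controls $f\mapsto T_{R}|f|$. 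Taking $u:=\min(u_{1},u_{2})$, both $\mathcal{M}_{R}$ and $T_{R}$ map $L^{p}(v)$ into $L^{p}(u)$ (resp.\ $L^{1,\infty}(u)$), since $L^{p}(u_{i})\hookrightarrow L^{p}(u)$ contractively, and therefore so does $\mathcal{T}^{*}_{R}$; this is $(1)$.

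It remains to prove $(5)\Rightarrow(3)$. The finiteness of $T_{R}f=T_{t_{0}}f$ $\mu$-a.e., with $R=t_{0}$ as above, follows from Proposition~\ref{propequiv}. For the a.e.\ convergence, put $\Omega f:=\limsup_{t\to0^{+}}|T_{t}f-f|$; by Lemma~\ref{lemmatest}, $\Omega g=0$ for every $g\in C_{c}^{\infty}$. Given $f\in L^{p}(v)$ and $\varepsilon>0$, choose $g\in C_{c}^{\infty}$ with $\|f-g\|_{L^{p}(v)}<\varepsilon$ (possible since $C_{c}^{\infty}$ is dense in $L^{p}(v)$, as $v\,\dd\mu$ is a Radon measure), so that $\Omega f=\Omega(f-g)\le\mathcal{T}^{*}_{R}(f-g)+|f-g|$. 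For $\lambda,\delta>0$, Chebyshev's inequality for $v$ and the bound $(1)$ for $\mathcal{T}^{*}_{R}$ give
\begin{align*}
\mu\bigl(\{\Omega f>\lambda\}\cap\{u\ge\delta\}\cap\{v\ge\delta\}\bigr)
&\le \frac{1}{\delta}\,u\Bigl(\bigl\{\mathcal{T}^{*}_{R}(f-g)>\tfrac{\lambda}{2}\bigr\}\Bigr)+\frac{1}{\delta}\,v\Bigl(\bigl\{|f-g|>\tfrac{\lambda}{2}\bigr\}\Bigr)\\
&\lesssim_{\lambda,\delta}\varepsilon^{p}.
\end{align*}
The left-hand side does not depend on $g$, hence vanishes; letting $\delta\to0$ and using that $u,v>0$ $\mu$-a.e.\ yields $\mu(\{\Omega f>\lambda\})=0$ for every $\lambda>0$, i.e.\ $T_{t}f\to f$ $\mu$-a.e.

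I expect the main obstacle to be the local/global decomposition in $(5)\Rightarrow(1)$: one must verify that the single radius $(d_{\gamma}R)^{1/\gamma}$ simultaneously makes the local part dominated by $\mathcal{M}_{R}$ via~\ref{P3} and the global part comparable to the fixed-time operator $T_{R}$ via~\ref{P5}. The weighted a.e.-convergence argument in $(5)\Rightarrow(3)$ is the other delicate point, since one cannot appeal to density in $L^{p}(u)$ and must instead exploit that the exceptional set $\{\Omega f>\lambda\}$ is independent of the smooth approximant.
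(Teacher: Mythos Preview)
Your proof is correct and the core step $(5)\Rightarrow(1)$ matches the paper exactly: the same local/global split at radius $(d_\gamma R)^{1/\gamma}$, with~\ref{P3} controlling the local piece by $\mathcal{M}_R$ and~\eqref{eqP5-2} collapsing the global piece to $T_R$, then combining the weights from Lemma~\ref{lemmamaximal} and Proposition~\ref{propequiv} via $u=\min(u_1,u_2)$.

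The route through the remaining implications differs. The paper runs the single cycle $(1)\Rightarrow(2)\Rightarrow(3)\Rightarrow(4)\Rightarrow(5)\Rightarrow(1)$: it obtains $(2)\Rightarrow(3)$ by citing a standard maximal-principle result (Grafakos) and then proves $(3)\Rightarrow(4)$ by a pointwise argument that genuinely uses~\eqref{eqP5-1}, namely that for a.e.\ $x$ the existence of the limit gives a threshold $t_{x,f}$ with $\mathcal{T}^*_{t_{x,f}}f(x)<\infty$, while the supremum over $[t_{x,f},R]$ is dominated by $C(R,t_{x,f})\,f*\psi_R(x)$. Your route instead takes the trivial shortcut $(2)\Rightarrow(4)$ and closes the loop through $(5)$, handling $(3)$ separately via $(5)\Leftrightarrow(3)$ with an explicit weighted density argument. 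This has two mild advantages: your scheme never invokes~\eqref{eqP5-1}, so that hypothesis is seen to be superfluous for the theorem as stated; and your $(5)\Rightarrow(3)$ is self-contained rather than deferring to a reference, with the intersection with $\{u\ge\delta\}\cap\{v\ge\delta\}$ handling the passage from weighted to unweighted measure cleanly. The paper's cycle, on the other hand, is slightly shorter to write and makes the role of each half of~\ref{P5} visible.
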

	\begin{proof}
		The implication $(1) \Rightarrow (2)$ is obvious, while $(2) \Rightarrow (3)$ is implied by Lemma~\ref{lemmatest}, the density of $C_{c}^{\infty}$ in $L^{p}(v)$ and~\cite[Theorem 2.1.14]{G}.
		
		Suppose then that $(3)$ holds. We can assume without loss of generality that $f\geq 0$; let $x$ be such that $f\ast\psi_R(x)<\infty$ and   $\lim_{t\to 0^{+}} f\ast\psi_t(x)$ exists and is finite. Therefore, there exists $t_{x,f}=t(x,f)>0$ such that
		\begin{equation}\label{Cxftxf}
			\mathcal{T}^{*}_{t_{x,f}} f(x) = \sup_{0<t<t_{x,f}} f\ast\psi_t(x) <\infty.
		\end{equation}
		We can assume $t_{x_{0},f}<R$. By~\ref{P5} we get
		\[
		\mathcal{T}^{*}_{R}f(x) \leq \mathcal{T}^{*}_{t_{x,f}}f(x) + \sup_{t_{x,f}<t<R} f\ast\psi_t(x) \leq \mathcal{T}^{*}_{t_{x,f}}f(x)  + C(R,t_{x,f})f\ast\psi_R (x)
		\]
		which is finite by assumption and~\eqref{Cxftxf}, and $(4)$ follows.
		
		If $(4)$ holds, then Proposition~\ref{propequiv}~(c) holds, whence (d), i.e.\ (5).
		
		Finally, assume (5) and suppose $0<t<R$ for some $R>0$. We split 
		\[
		f*\psi_t = (f* \mathbf{1}_{B(e, (d_{\gamma}R)^{1/\gamma})}\psi_{t}) + (f* \mathbf{1}_{B(e, (d_{\gamma}R)^{1/\gamma})^c}\psi_{t}).
		\]
		By~\ref{P3} we have 
		\[
		\sup_{0<t<R}(f* \mathbf{1}_{B(e, (d_{\gamma}R)^{1/\gamma})}\psi_{t}) \lesssim \mathcal{M}_{R}f,
		\] 
		while if $t\in (0,R)$ and $|x|\geq (d_{\gamma}R)^{1/\gamma}$ then~\ref{P5} implies 
		\begin{align*}
			\sup_{0<t<R} (f* \mathbf{1}_{B(e, (d_{\gamma}R)^{1/\gamma})^{c}}\,\psi_{t}) 
			& \lesssim (f* \mathbf{1}_{B(e, (d_{\gamma}R)^{1/\gamma})^{c}}\psi_{R}) \lesssim (f*\psi_{R}).
		\end{align*}
		We conclude that
		\[
		\mathcal{T}^{*}_{R}f  \lesssim \mathcal{M}_{R}f + T_{R}f.
		\]
		Let now $R>0$ and $u_{1}$ be the weight given by  Proposition~\ref{propequiv}~(a), and $u_{2}$ be the weight given by Lemma~\ref{lemmamaximal}~(a). Then $(1)$ follows with $u= \min(u_{1},u_{2})$.
	\end{proof}

	\section{Pointwise convergence and the distinguished Laplacian}\label{sec:distinguished}
	
	In this final section we point out how the above results can be obtained also for the Cauchy problems~\eqref{Cauchyintro} associated to the so-called distinguished Laplacian of $\mathbb{X}$. We will not provide all the details, but rather only present the adaptations that the arguments need.

	Let $S=N(\exp{\mathfrak{a}})=(\exp{\mathfrak{a}})N$ be the solvable group appearing in the Iwasawa decomposition of $G$. As a manifold, $S$ can be identified with $\mathbb{X}$. The non-negative distinguished Laplacian $\tilde{\Delta}$ on $S$ is 
	given by
	\[
	\tilde{\Delta}=\tilde{\delta}^{1/2} \, \Ls_{0} \, \tilde{\delta}^{-1/2}
	\]
	where $\tilde{\delta}$ is the modular function of $S$, namely
	\begin{align*}
		\tilde{\delta}(g)=\tilde{\delta}(n(\exp{A}))=\e^{-2\langle{\rho,A}\rangle}, \qquad g\in S.
	\end{align*}
	Here $n=n(g)$ denotes the $N$-component of $g$ in the Iwasawa decomposition $G=N (\exp\mathfrak{a}) K$, while as before $A=A(g) \in \mathfrak{a}$ denotes the middle component.
	The distinguished Laplacian $\tilde{\Delta}$ is left-$S$-invariant and
	self-adjoint with respect to the right Haar measure $\varrho $ on $S$, which is given by
	\begin{align*}
		\int_{S}f(g)\, \textrm{d}\varrho(g) =\int_{N} \int_{\mathfrak{a}}f(n(\exp{A}))\, \dd A \, \dd n=\int_{\mathfrak{a}} \e^{2\langle{\rho,A}\rangle} \int_{N}f((\exp{A})n) \, \dd n\, \dd A.
	\end{align*}
	If  {$\dd \lambda (g) =  { \tilde{\delta}(g) } \, \dd \varrho (g)$} is the associated left Haar measure on $S$, one also has
	\begin{align}\label{drdldg}
		\int_{S}f(g)\,  { \textrm{d} \varrho(g) }=\int_{G} f(g) \, \e^{2\langle{\rho,A(g)}\rangle}\, \diff{g}
		\qquad\textnormal{and}\qquad
		\int_{S}f(g)\,  {\textrm{d} \lambda(g) } = \int_{G}f(g) \, \diff{g}.
	\end{align}
	Observe that $ {\varrho}(gE) = \tilde{\delta}(g)^{-1} {\varrho}(E)$ for all $g\in G$ and all measurable subsets $E \subseteq S$.

	\subsection{Cauchy problems}
	
	The Cauchy problem for the heat equation on $S$ associated with the distinguished Laplacian is
	\begin{align*}
		\begin{cases} \partial_{t}\tilde{u} + \tilde{\Delta}\tilde{u}=0, \qquad  \; t>0,\\
			\tilde{u}(\, \cdot \, , 0)=\tilde{f},
		\end{cases}	
	\end{align*}
	and the corresponding heat kernel is given by 
	\begin{equation}\label{heatdist}
		\tilde{h}_{t}=\tilde{\delta}^{\frac12}\, \e^{\|\rho\|^{2}t}h_{t}=\tilde{\delta}^{\frac12}h_{t}^{0},
	\end{equation}
	in the sense that
	\begin{align*}
		(\e^{-t\tilde{\Delta}\,} \tilde{f})(g)=(\tilde{f}*\tilde{h}_{t})(g)=\int_{S}\tilde{f}(y)\,\tilde{h}_{t}(y^{-1}g)\,\textrm{d}  {\lambda (y)} =\int_{S}\tilde{f}(gy^{-1})\,\tilde{h}_{t}(y)\, \dd  {\varrho (y)}.
	\end{align*}
	Here, we still denote by $*$ the convolution product on $S$ or on $G$. We refer to~\cite{Bou1983,CGGM1991} for more details.

	The Caffarelli--Silvestre extension problem for $\tilde{\Delta}$ on $S$ writes instead 
	\begin{align*}
		\begin{cases}
			\tilde{\Delta} \tilde{u} - \frac{(1-2\sigma)}{t} \partial_{t} \tilde{u} -  \partial^{2}_{tt} \tilde{u}=0, \qquad  \; t>0, \\
			\tilde{u}(\, \cdot \, ,0)=\tilde{f}
		\end{cases} 
	\end{align*}
	and this, by~\eqref{heatdist} and the subordination formula~\eqref{kernelCS}, still admits a fundamental solution $\tilde{Q}_t^{\sigma}=\tilde{\delta}^{1/2}\,Q_t^{\sigma, 0}$. The use of the latter is justified by the arguments in~\cite{BanEtAl} presented in Subsection~\ref{subsection CS} and observing that, if $f\in L^2(\mathbb{X})$ and $\tilde{f}=\tilde{\delta}^{1/2}f$, then by~\eqref{drdldg}
	\[
	\int_{S}\tilde{f}^2(g)\, \diff  {\varrho} (g) =\int_{S}f^2(g)\, \e^{-2\langle \rho, A(g)\rangle} {\diff \varrho} (g)=\int_{G} f^2(g) \,\diff{g}.
	\]
	Finally, for the fractional heat equation on $S$
	\begin{equation*}
		\begin{cases}
			\partial_t \tilde{u} +\tilde{\Delta}^{\alpha/2}\tilde{u} = 0,  \qquad \; t>0,\\
			\tilde{u}(\, \cdot \, ,0)=\tilde{f}
		\end{cases} 
	\end{equation*} 
	the solution is given by $\tilde{u}=\tilde{f}\ast \tilde{P}_t^{\alpha}$, where $\tilde{P}_t^{\alpha}=\tilde{\delta}^{1/2}\,P_t^{\alpha, 0}$ again in view of the subordination formula~\eqref{kernelFH}.
	
	The analogue of the class $\mathcal{P}_{\gamma}$ is now the following.
	
	\begin{definition}
		Suppose $\gamma>0$. We say that a family of measurable functions $(\tilde{\psi}_t)_{t>0}$ on $S$ \textit{belongs to the class $\tilde{\mathcal{P}}_{\gamma}$} if $\tilde{\psi}_t=\tilde{\delta}^{1/2}\, \psi_t$  with $(\psi_t)\in \mathcal{P}_{\gamma}$.
	\end{definition}
	Observe that this ensures that $\tilde{\psi}_t$ are also positive functions, satisfying the pointwise estimates~\ref{P3}--\ref{P5}. Indeed, the modular function $\tilde{\delta}$ is a continuous character of $S$. In particular, by the discussion above  and by Propositions~\ref{prop:heatP2},~\ref{propQP} and~\ref{propPP} the kernels $(\tilde{h}_t)$, $(\tilde{Q}_t^{\sigma})$ and $(\tilde{P}_t^{\alpha})$ belong to  $\tilde{\mathcal{P}}_{\gamma}$, for $\gamma=2$, $1$ and $\alpha$ respectively. As for the second assertion of~\ref{P3}, define, for $R>0$, the local maximal operator (with respect to the right measure) on $S$
	\[
	\tilde{\mathcal{M}}_{R}f = \sup_{0<r<R} \tilde{\mathcal{A}}_{r} f, \qquad \tilde{\mathcal{A}}_{r} f(x)= \frac{1}{\varrho(B(x,r))} \int_{B(x,r)} |f|\, \diff \varrho, \qquad x\in S,
	\]
	and observe that by~\eqref{drdldg} and since $\tilde \delta$ is bounded above and below away from zero on all compact sets, for all measurable functions $h$
	\begin{equation}\label{MRsxdx}
		\tilde{\mathcal{M}}_{R} h(x) \approx_{R}  \mathcal{M}_{R}h(x), \qquad x\in S.
	\end{equation}
	
	From now on, we shall denote by $L^p$, $1\leq p\leq \infty$, the Lebesgue spaces and by $L^{1,\infty}$ the Lorentz space with respect to the \emph{right} measure on $S$. If $v$ is a weight, i.e.\ a positive $L^{1}_{\mathrm{loc}}$ function on $S$, $L^p(v)$ (or $L^{p,\infty}(v)$) will be the spaces with respect to the measure  $v\, \dd{\varrho}$. The notations for the norms will remain unchanged. By $C_{c}$ and $C_{c}^{\infty}$ respectively we shall now mean the sets of continuous compactly supported and smooth and compactly supported functions on $S$.  The definition of the classes $\mathcal{D}_{p}$ needs to be modified accordingly.	
	
	\begin{definition}
		Suppose $(\tilde{\psi}_t) \in \tilde{\mathcal{P}}_{\gamma}$ for some $\gamma>0$ and $1\leq p<\infty$, and let $v$ be a weight. We say that $v$ belongs to the class $\tilde{\mathcal{D}}_{p}(\tilde \psi_{t})$ if there exists $t_0> 0$ such that $\tilde{\psi}_{t_{0}}v^{-\frac{1}{p}} \in L^{p'}$, and that $v$ belongs to the class $\tilde{\mathcal{D}}_{p}^{\mathrm{loc}}$ if $v^{-\frac{1}{p}} \in L^{p'}_{\mathrm{loc}}$.
	\end{definition}
	Then the analogues of Lemma~\ref{lemmaequiv} and Corollary~\ref{Cor Dploc} hold.
	
	Now consider convolution operators with right-convolution kernels from the class  $\tilde{\mathcal{P}}_{\gamma}$, $\gamma>0$, that is, operators
	\[
	\tilde{T}_t\tilde{f}(x)=\tilde{f}\ast \tilde{\psi}_t(x)=\int_{S}\tilde{f}(y)\,\tilde{\psi}_t(y^{-1}x)\,\diff  {\lambda (y)}, \qquad x\in S, \; t>0,
	\]
	for some $\gamma>0$ and $(\tilde{\psi}_t)\in \tilde{\mathcal{P}}_{\gamma}$. Observe that in view  of the symmetry of $\psi_t$, which implies
	$$\tilde{\psi}_t(z)=\tilde{\delta}(z)\tilde{\psi}_t(z^{-1}), \qquad z\in G,$$ the action of $\tilde{T}_t$ writes
	\begin{align}
		\tilde{T}_t \tilde{f}(x)
		&= \int_{S}\tilde{\psi}_{t}(y^{-1}x)  \tilde{f}(y) \, {\diff \lambda (y)}= \int_{S}\tilde{\psi}_{t}(y^{-1}x) \tilde{f}(y)\, \tilde{\delta}(y)\,  {\diff \varrho (y)} \notag \\
		&= \tilde{\delta}(x)\int_{S}\tilde{\psi}_{t}(x^{-1}y)\tilde{f}(y)  \,  {\diff \varrho (y)} \label{conv symm}.
	\end{align} 
	
	We are left with proving the analogues of Proposition~\ref{vvw11} and Lemma~\ref{lemmatest}, after which we will be essentially done. For a sequence of measurable functions $f=(f_{j})$ on $S$  define $ \tilde{\mathcal{M}}_{R}f = ( \tilde{\mathcal{M}}_{R}f_{j} )$ and the quantities $|f|_{q}$ and $|\tilde{\mathcal{M}}_{R} f|_{q}$ as in~\eqref{vvnorms}.

	\begin{proposition}\label{vvw11right}
		Suppose $R>0$ and $q\in (1,\infty)$. Let $f=(f_{j})$ be a sequence of measurable functions on $S$. Then the following holds.
		\begin{enumerate}
			\item If $|f|_{q} \in L^{1}$, then $ |\tilde{\mathcal{M}}_{R} f|_{q} \in L^{1,\infty}$ and 
			\[
			\| |\tilde{\mathcal{M}}_{R} f|_{q} \|_{1,\infty} \lesssim_{R,q} \| |f|_{q}\|_{1}.
			\]
			\item If $p\in (1,\infty)$ and $|f|_{q}\in  L^{p}$, then $\tilde{ \mathcal{M}}_{R} f \in L^{p} $ and
			\[
			\| |\tilde{\mathcal{M}}_{R} f|_{q} \|_{p} \lesssim_{R,q} \| |f|_{q}\|_{p}.
			\]
		\end{enumerate}
	\end{proposition}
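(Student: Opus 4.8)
The plan is to follow the proof of Proposition~\ref{vvw11} almost verbatim, replacing the $G$-invariant measure $\mu$ by the right measure $\varrho$ on $S$ and $\mathcal{M}_R$ by $\tilde{\mathcal{M}}_R$, and to absorb the mismatch between the two measures through the modular function $\tilde\delta$.

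First I would reduce, via \eqref{MRsxdx}, to estimating $\mathcal{M}_R$ instead of $\tilde{\mathcal{M}}_R$: since $|\tilde{\mathcal{M}}_R f|_q \approx_R |\mathcal{M}_R f|_q$ pointwise on $S$ with a constant depending only on $R$ (in particular uniform in the index $j$), the weak-type and strong-type inequalities for $|\tilde{\mathcal{M}}_R f|_q$ in $L^{1,\infty}(\varrho)$ and $L^p(\varrho)$ are equivalent to the corresponding ones for $|\mathcal{M}_R f|_q$. So it suffices to show
\[
s\,\varrho\big(\{|\mathcal{M}_R f|_q > s\}\big) \lesssim_{R,q} \big\||f|_q\big\|_{L^1(\varrho)} \quad\text{and}\quad \big\||\mathcal{M}_R f|_q\big\|_{L^p(\varrho)} \lesssim_{R,q} \big\||f|_q\big\|_{L^p(\varrho)}.
\]

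Next I would run the partition argument behind Proposition~\ref{vvw11} for the weak-type bound: use the decomposition $S = \bigcup_k A_k$ with $B(x_kK,1)\subseteq A_k\subseteq B(x_kK,2)$ from \cite[Lemma 5]{AL} (the $x_k$ may be taken in $S$), the left-$G$-invariance \eqref{MRTI} of $\mathcal{M}_R$, the support property $\supp(\mathcal{M}_R g)\subseteq E_R$, the bounded overlap property, and the local vector-valued weak-type $(1,1)$ inequality \eqref{localVV} on balls of radius $4+R$. Exactly as in the chain \eqref{VV11ineq}, writing $E_k = \{xK\in A_k\colon |\mathcal{M}_R f|_q(x)>s\}$, one arrives at
\[
s\,\mu(E_k) \lesssim_R \sum_{\ell=1}^{m(k)} \int_{A_{h_\ell^k}} |f|_q\,\dd\mu,
\]
where $h_1^k,\dots,h_{m(k)}^k$ are the at most $N=N(R)$ indices with $d(x_kK,x_{h_\ell^k}K)\le 4+R$. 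The one new ingredient is the transfer from $\mu$ to $\varrho$. Since $\tilde\delta(g) = \e^{-2\langle\rho,A(g)\rangle}$ is a continuous character of $S$ and $\|A(g)\| \le |g|$ by \eqref{S2 Distance}, the character $\tilde\delta$ is comparable to the constant $\tilde\delta(x_k)$ on each $A_k$ with an absolute constant (as $|x_k^{-1}y|\le 2$ for $y\in A_k$), and $\tilde\delta(x_{h_\ell^k}) \approx_R \tilde\delta(x_k)$ whenever $d(x_kK,x_{h_\ell^k}K)\le 4+R$. Hence $\varrho(E_k) \approx \tilde\delta(x_k)^{-1}\mu(E_k)$ and $\tilde\delta(x_k)^{-1}\int_{A_{h_\ell^k}}|f|_q\,\dd\mu \approx_R \int_{A_{h_\ell^k}}|f|_q\,\dd\varrho$, so summing over $k$ and using the bounded overlap property once more,
\[
s\sum_k \varrho(E_k) \lesssim_R \sum_k\sum_{\ell=1}^{m(k)}\int_{A_{h_\ell^k}}|f|_q\,\dd\varrho \lesssim_R \int_S |f|_q\,\dd\varrho,
\]
which is the desired weak-type estimate. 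Statement (2) follows in the same way, using the local vector-valued $L^p$ inequality on balls of radius $4+R$ and the argument behind statement (2) of Proposition~\ref{vvw11}, with the modular factors absorbed as above; as there, one cannot simply interpolate (2) from (1), since the $\ell^q$-valued $\mathcal{M}_R$ fails to be bounded on $L^\infty$.

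I expect the main (and essentially only) obstacle to be bookkeeping: one must confirm that $\tilde\delta$ is comparable to a constant on each atom $A_k$ and across neighbouring atoms, with constants depending only on $R$, and then track these factors carefully so that they cancel when each local $\mu$-estimate is turned into a $\varrho$-estimate. These comparabilities are immediate consequences of $\tilde\delta$ being a continuous character, the bound \eqref{S2 Distance}, and the uniform bounds on the diameters of and overlaps between the $A_k$; beyond this, no analytic input is needed that is not already contained in Proposition~\ref{vvw11} and \eqref{MRsxdx}.
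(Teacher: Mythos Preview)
Your proposal is correct and follows essentially the same route as the paper's proof: reduce $\tilde{\mathcal{M}}_R$ to $\mathcal{M}_R$ via \eqref{MRsxdx}, reuse the chain \eqref{VV11ineq} from Proposition~\ref{vvw11} piece by piece, and then transfer each local estimate from $\mu$ to $\varrho$ using that $\tilde\delta$ is essentially constant on each $A_k$ and on neighbouring pieces. The only cosmetic difference is that the paper carries out the transfer via the exact identities $\varrho(x_kF_k)=\tilde\delta(x_k)^{-1}\varrho(F_k)$ and $\int_S \tau_{x_k^{-1}}g\,\dd\varrho = \tilde\delta(x_k)\int_S g\,\dd\varrho$, whereas you use the (equivalent) soft pointwise comparability of $\tilde\delta$ on balls of bounded radius.
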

	
	\begin{proof}
		Again we shall give the details of (1) only. By~\eqref{MRsxdx}, by maintaining the same notation as the one of the proof of Proposition~\ref{vvw11}, for all $s>0$ we have
		\begin{align*}
			s  \, \varrho \bigg( \bigg\{ x \in S\colon \bigg( \sum_{j}| \tilde{\mathcal{M}}_{R}f_{j}(x)|^{q}\bigg)^{1/q}> s \bigg\} \bigg)
			\approx s \, \sum_{k} \varrho(E_{k})  =  s \, \sum_{k}  \tilde{\delta}(x_{k})^{-1} \varrho(F_{k}).
		\end{align*}
		Since $F_{k}$ is contained in $B(e, 2)$, where $e$ is here the identity of $S$, one has ${\varrho}(F_{k}) \approx \mu(F_{k}) = \mu(E_{k})$, whence by~\eqref{VV11ineq}
		\begin{align*}
			s  \sum_{k}\varrho(E_{k})  
			&\lesssim \sum_{k}\tilde{\delta}(x_{k})^{-1} \sum_{\ell=1}^{m} \int_{S}  \bigg( \sum_{j}|  \tau_{x_{k}^{-1}} f_{j}^{h_{\ell}^{k}}|^{q}\bigg)^{1/q} \, \dd\varrho \\
			& \lesssim  \sum_{k}  \int_{B(x_{k},4+R)} \bigg( \sum_{j}| f_{j}|^{q}\bigg)^{1/q} \, \dd \varrho\\
			& \lesssim \int_{S} \bigg( \sum_{j}| f_{j} |^{q}\bigg)^{1/q} \, \dd\varrho,
		\end{align*}
		which concludes the proof.
	\end{proof}
	
	\begin{lemma}\label{lemmatestS}
		If $\tilde{f}\in C_{c}^{\infty}$, then $\tilde{T}_t \tilde{f}(x) \to_{t\to 0^{+}} \tilde{f}(x)$ for all $x\in S$.
	\end{lemma}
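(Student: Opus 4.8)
The plan is to reduce the statement to its already established analogue on $\mathbb{X}$, Lemma~\ref{lemmatest}, using that $\tilde\psi_t=\tilde\delta^{1/2}\psi_t$ with $(\psi_t)\in\mathcal{P}_\gamma$ and that $\tilde\delta$ is a smooth, strictly positive character of $S$. The core is the intertwining identity
\[
\tilde{T}_t\tilde f(x)=\tilde\delta(x)^{1/2}\,T_t f(xK),\qquad x\in S,\qquad f:=\tilde\delta^{-1/2}\tilde f,
\]
where on the right $f$ is viewed as a function on $\mathbb{X}$, i.e.\ a right-$K$-invariant function on $G$. To prove it I would start from the symmetric form \eqref{conv symm} of $\tilde T_t$, write $\tilde\psi_t(x^{-1}y)=\tilde\delta(x^{-1}y)^{1/2}\psi_t(x^{-1}y)=\tilde\delta(x)^{-1/2}\tilde\delta(y)^{1/2}\psi_t(x^{-1}y)$ since $\tilde\delta$ is a character, then use $\tilde\delta(y)^{1/2}\,\diff\varrho(y)=\tilde\delta(y)^{-1/2}\,\diff\lambda(y)$ (because $\diff\lambda=\tilde\delta\,\diff\varrho$) to obtain
\[
\tilde T_t\tilde f(x)=\tilde\delta(x)^{1/2}\int_S \psi_t(x^{-1}y)\,f(y)\,\diff\lambda(y).
\]
Finally, use the symmetry $\psi_t(x^{-1}y)=\psi_t(y^{-1}x)$ from~\ref{P1}, extend $f$ to $G$ right-$K$-invariantly, and observe that $y\mapsto f(y)\psi_t(y^{-1}x)$ is then right-$K$-invariant — because $\psi_t$ is left-$K$-invariant, $\psi_t((yk)^{-1}x)=\psi_t(y^{-1}x)$ — so \eqref{drdldg} turns the integral over $S$ against $\diff\lambda$ into $\int_G f(y)\psi_t(y^{-1}x)\,\diff y=(f\ast\psi_t)(x)=T_tf(xK)$.

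The second ingredient is that $f=\tilde\delta^{-1/2}\tilde f$ belongs to $C_c^\infty(\mathbb{X})$: on $S$ the function $\tilde\delta^{-1/2}\tilde f$ is smooth and compactly supported since $\tilde\delta$ is a smooth nonvanishing character and $\tilde f\in C_c^\infty(S)$, and pulling it back along the diffeomorphism $S\times K\to G$, $(s,k)\mapsto sk$, coming from the Iwasawa decomposition $G=SK$, produces a smooth function on $G$ with compact support $(\supp\tilde\delta^{-1/2}\tilde f)\cdot K$, which is exactly the right-$K$-invariant extension of $f$. Granted these two facts, the conclusion is immediate: Lemma~\ref{lemmatest} gives $T_tf(xK)\to f(xK)$ as $t\to0^+$ for every $x\in G$, hence for $x\in S$ we get
\[
\tilde T_t\tilde f(x)=\tilde\delta(x)^{1/2}\,T_tf(xK)\ \longrightarrow\ \tilde\delta(x)^{1/2}f(xK)=\tilde\delta(x)^{1/2}\tilde\delta(x)^{-1/2}\tilde f(x)=\tilde f(x)
\]
as $t\to0^+$.

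I expect the only real work to be the bookkeeping in the first step: correctly tracking the several factors $\tilde\delta^{1/2}$ — one coming from the kernel $\tilde\psi_t$, another from the exchange $\diff\lambda\leftrightarrow\diff\varrho$ — and making sure the $G$-convolution that emerges is genuinely the operator $T_t$ of Section~\ref{sec:pc} applied to a legitimate function on $\mathbb{X}$. Once the identity $\tilde T_t\tilde f=\tilde\delta^{1/2}\,T_t(\tilde\delta^{-1/2}\tilde f)$ is in place there is no analytic content left, as $\tilde\delta$ is bounded above and below on compact sets and all the limiting behaviour is inherited from Lemma~\ref{lemmatest}.
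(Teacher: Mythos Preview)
Your proposal is correct and follows essentially the same approach as the paper: define $f=\tilde\delta^{-1/2}\tilde f\in C_c^\infty(\mathbb{X})$, establish the intertwining identity $\tilde T_t\tilde f(x)=\tilde\delta(x)^{1/2}(f\ast\psi_t)(x)$ via~\eqref{drdldg}, and invoke Lemma~\ref{lemmatest}. The only cosmetic difference is that the paper works directly from the defining expression $\int_S\tilde f(y)\tilde\psi_t(y^{-1}x)\,\diff\lambda(y)$ rather than passing through~\eqref{conv symm} and then using the symmetry of $\psi_t$ to flip the argument back, which saves you one step.
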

	\begin{proof} Define the function  {$f(xK)=\tilde{\delta}(x)^{-\frac{1}{2}}\tilde{f}(x)$ on $\mathbb{X}$} so that $f$ is a right-invariant smooth and compactly supported function on $G$. By~\eqref{drdldg}  then
		\begin{align*}
			\tilde{T}_t\tilde{f}(x)&=\tilde{f}\ast \tilde{\psi}_t(x)=\int_{S}\tilde{f}(y)\,\tilde{\psi}_t(y^{-1}x)\,\diff {\lambda}(y)=\tilde{\delta}^{\frac{1}{2}}(x)\int_G f(y)\, \psi_t(y^{-1}x)\, \diff{y}, 
		\end{align*}
		whence
		$$\tilde{T}_t\tilde{f}(x)-\tilde{f}(x)=\tilde{\delta}^{\frac{1}{2}}(x)(f\ast\psi_t(x)-f(x)).$$
		By Lemma~\ref{lemmatest}, the claim follows.
	\end{proof}
	It is now only a matter of using~\eqref{conv symm}, Proposition~\ref{vvw11right}, Lemma~\ref{lemmatestS} and the fact that the kernels $(\tilde{\psi}_t)$ satisfy~\ref{P3}--\ref{P5}, and repeating almost verbatim the arguments used in Section~\ref{sec:pc}, to get the analogue of Theorem~\ref{teo:main}. We omit the details.

\end{document}